\newcommand{\N}{{\mathbb{N}}}  
\newcommand{\R}{{\mathbb{R}}}  
\newcommand{\C}{{\mathbb{C}}}  
\newcommand{\id}{{\mathrm{d}}}  
\newcommand{\Hy}{{\mathbb{H}}} 
\newcommand{\funcg}{g}
\newcommand{\metricg}{\mathsf{g}}
\DeclareMathOperator{\arccosh}{arccosh}
\DeclareMathOperator{\arcsinh}{arcsinh}
\DeclareMathOperator{\sign}{sign}
\DeclareMathOperator{\2F1}{{}_2F_1}
\theoremstyle{plain}
\newtheorem{thm}{Theorem}[section]
\newtheorem{lem}[thm]{Lemma}
\newtheorem{cor}[thm]{Corollary}
\theoremstyle{remark}
\newtheorem{exm}[thm]{Example}
\theoremstyle{definition}
\newtheorem{dfn}[thm]{Definition}
\newtheorem{notation}[thm]{Notation}
\author{Kamil Mroz, Alexander Strohmaier}
\title{Explicit bounds on eigenfunctions and spectral functions on manifolds hyperbolic near a point}
\begin{document}
\maketitle
\begin{abstract}
 We derive explicit bounds for the remainder term in the local Weyl law for locally hyperbolic
 manifolds, we also give the estimates of the derivative of this remainder. We use these to obtain explicit bounds for the
 $C^l$-norms of the $L^2$-normalised eigenfunctions in the case spectrum of the Laplacian is discrete, e.g. 
for closed Riemannian manifolds. 
 We also derive bounds for the local heat trace. Our estimates are purely
 local and therefore also hold for any manifold at points near which the metric is locally hyperbolic.
\end{abstract}

\section{Introduction}

Let $(M,\metricg)$ be an n-dimensional Riemannian manifold and as usual denote by $C^\infty_0(M)$ the 
space of compactly supported smooth functions and by $L^2(M)$ the space of square integrable functions, i.e.
the completion of $C^\infty_0(M)$ in the $L^2$-inner product $\langle f_1,f_2 \rangle=\int_M f_1(x) \overline{f_2(x)} \, \id \mathrm{vol}_\metricg(x)$,
where $\id \mathrm{vol}_\metricg$ is the Riemannian volume form. Suppose that $\Delta$ is a non-negative (in the sense of operator theory) 
self-adjoint extension of the Laplace operator acting on $C^\infty_0(M)$. Note that such self-adjoint extensions always exist and are 
unique in case the manifold is complete. We do not however assume completeness here.

By the spectral theorem for unbounded self-adjoint operators, there exists a spectral family $E_\lambda$
with associated spectral measure such that
$$
 \Delta = \int_\mathbb{R} \lambda^2 \, \id E_\lambda.
$$
For each $\lambda \in \mathbb{R}$ the operator $E_\lambda$ is a projection and since
$\Delta^N E_\lambda$ is $L^2$-bounded for any $N>0$ it follows from elliptic regularity that $E_\lambda$
has a smooth integral kernel $e_\lambda \in C^\infty(M \times M)$, i.e.
$$
 (E_\lambda f)(x) = \int_M e_\lambda(x,y) f(y) \, \id \mathrm{vol}_g(y).
$$
The restriction of $e_\lambda$ to the diagonal in $M \times M$ is called the local counting function, and is denoted by
\begin{equation} \label{def1}
 N_x(\lambda) := e_\lambda(x,x).
\end{equation}
Note that in the case that $\Delta$ has compact resolvent the spectrum is purely discrete
and there exists an orthonormal basis $(\varphi_j)_{j \in \mathbb{N}_0}$ in $L^2(M)$ consisting of 
eigenfunctions with eigenvalues $\lambda_j^2$ for $\Delta$:
$$
 \Delta \varphi_j = \lambda_j^2 \varphi_j. 
$$
The local counting function is then given by
$$
 N_x (\lambda) = \sum_{\lambda_j \leq \lambda} | \varphi_j(x) |^2,
$$
and integration over $M$ gives the usual counting function
$$
 N(\lambda) = \int_M N_x(\lambda) \, \id \mathrm{vol}_\metricg(x) = \# \{ \lambda_j \leq \lambda\}.
$$
The local counting function is well defined independently of any assumptions on the discreteness of the spectrum.

The famous local Weyl law states that
$$
 N_x(\lambda) = \frac{\omega_n}{(2 \pi)^n} \lambda^{n} + O(\lambda^{n-1}),
$$
where $\omega_n$ is the volume of the unit ball in $\mathbb{R}^n$. The proof of this result in the case of closed Riemannian manifolds is due to Levitan. In 1968 H\"ormander~\cite{Hormander2} generalised it to the case of pseudodifferential operators of order $m$.  
Various improvements on the remainder term are known in case of compact manifolds with negative sectional curvature or
under assumptions on the nature of the dynamics of the geodesic flow. In particular, in 1975 Duistermaat and Guillemin~\cite{Duistermaat} proved that the estimate $O(\lambda^{n-1})$ cannot be improved when the geodesic flow is periodic. They also showed that for boundary-less manifolds the remainder is $o(\lambda^{n-1})$, when the set of periodic geodesics has Liouville measure 0. Two years later B\'erard~\cite{Berard} obtained a logarithmic improvement for manifolds with nonpositive sectional curvature (for surfaces it suffices to have no conjugate points).  

Note that the original local Weyl law immediately implies a uniform bound on eigenfunctions:
$$
 \| \varphi_j \|_{L^\infty(M)} \leq C_M \lambda_j^{\frac{n-1}{2}},
$$
for some $C_M>0$ in the case that $\Delta$ has compact resolvent. This estimate is sharp without additional assumptions,
as the example of the sphere shows. Further Sogge~\cite{Sogge} showed that without any additional assumptions on $M$ we have the following bound on $L^p$ norms:
\begin{eqnarray*}
& \| \varphi_j \|_{L^p(M)} \leq C_{M,L^p} \lambda_j^{\delta(n,p)},&\\
& \delta(n,p)= 
 \begin{cases}
 \frac{n-1}{2}- \frac{n}{p}, &  \frac{2(n+1)}{n-1} \leq p \leq \infty, \\
 \frac{n-1}{4}- \frac{n-1}{2p}, &2 \leq p \leq \frac{2(n+1)}{n-1} .
\end{cases} &
\end{eqnarray*}

It is natural that B\'erard's logarithmic improvement of the local counting function leads to the logarithmic improvements of the $L^p$ estimates. In~\cite{Tacy} Hassell and Tacy proved that under the same assumptions as in B\'erard~\cite{Berard}, one has:
$$
 \| \varphi_j \|_{L^p(M)} \leq C_{M,L^p} \frac{\lambda_j^{\frac{n-1}{2}-\frac{n}{p}}}{(\log \lambda_j)^{\frac{1}{2}- \frac{n+1}{p(n-1)}}}.
$$
Similarly, one can derive a bound on the $C^l$-norm
$$
 \| \varphi_j \|_{C^l(M)} \leq C_{l,M} \lambda_j^{l+\frac{n-1}{2}}.
$$
Whereas such  estimates of eigenfunctions are very useful for
 proofs and general considerations for purposes of numerical
analysis one often needs an explicit value of the constant.
The aim of this article is to provide explicit estimates of the 
eigenfunctions and local counting function of the Laplacian for manifolds
that are hyperbolic near the point in question.
More specifically, suppose that $\Delta$ has compact resolvent, $x \in M$ and the open metric ball $B_\rho(M,x)$ of radius $\rho>0$
centred at $x$ is locally isometric to 
hyperbolic $n$-space. We find constants $C_{1,M}$ depending only on $\rho$ such that
$$
| \nabla \varphi_j(x) | \leq C_{1,M} \lambda^{\frac{n+1}{2}}.
$$
We also present an algorithm for computing $C_{l,M}$ in the case of surfaces. As an example we give formulas for $C_{l,M}$ $(l =1,2, \ldots, 8)$ in this context (for bigger $l$ formulas get very complicated).  Our estimates are derived from estimates for the local counting function.

The paper is organised as follows: first, we present the classical formula for the solution to the shifted wave equation~(\ref{wave}) in hyperbolic space and relate it to the integral kernel of $\cos(\sqrt{\Delta -(n-1)^2/4} \, t)$; through this paper we call this the shifted wave kernel. Second, through the analysis of the mentioned  wave kernel we present the generalisation of the Mehler-Fock formula (Theorem~\ref{transform}), which gives us a way to compute the value of the integral kernel of $\int\cos(\sqrt{\Delta -(n-1)^2/4}\, t) \funcg(t) \id t$ on the diagonal, where $\funcg$ is a test function. Next, by finite propagation speed and the Fourier-Tauberian argument given in~\cite{Safarov} we are able to show our main result: estimates on the local counting function of the Laplace operator (Theorem~\ref{main}). As consequences of this result we show bounds on the local heat trace and estimate the growth of eigenfunctions of the Laplacian as well as growth of their first derivative. At the end we show how to find estimates of higher derivatives of the eigenfunctions for the hyperbolic surfaces.

\section[The wave kernel]{The integral kernel of the solution operator of the shifted wave equation}
In order to study the local counting function of the Laplace operator at a point $x \in M^n$ near which $M^n$ is hyperbolic, we first need to study the shifted wave kernel on $\Hy^n$. Here is a motivation: Safarov in~\cite{Safarov} (Theorem 1.3) showed that for the suitably fast decreasing functions $\varrho_1$, $\varrho_2$ one has:
$$
|N_x(\lambda) - N_x \ast \varrho_1(\lambda)| \leq N'_x \ast \varrho_2(\lambda)
$$
The Weyl law implies that the local counting function has polynomial growth, therefore the right hand side of the above inequality is of lower order, therefore we may treat is as correction term in the expansion. In addition if the Fourier transforms of functions $\varrho_1$, $\varrho_2$ is compactly supported, then $N_x \ast \varrho_1$ and $N'_x \ast \varrho_2$ depend only on values of the Fourier transforms  of $N_x$ and $N_x'$ on the  on the support of $\varrho_1$ and $\varrho_2$ respectively. Moreover by a simple modification of the counting function, one does not need the full information about the Fourier transform of $N_x$ and $N'_x$ but it is enough to know just the cosine transform  of $N'_x$. By the lemma below we will show how it is connected to the wave kernel.
\begin{lem}\label{lem1}
The cosine transform of the derivative of the local counting function of the Laplacian on the manifold $M$ is the diagonal of the wave kernel on $M$.
\end{lem}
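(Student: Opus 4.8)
\emph{Proof strategy.} The identity is to be read in the sense of distributions in the time variable $t$. Fix a test function $\psi\in C_0^\infty(\R)$ and write $\widehat\psi_c(\lambda)=\int_\R\psi(t)\cos(\lambda t)\,\id t$ for its cosine transform. Since the local Weyl law forces $N_x$ to grow polynomially, the non-decreasing function $N_x$ generates a tempered Stieltjes measure $\id N_x$, so
$$
\langle T_x,\psi\rangle:=\int_0^\infty\widehat\psi_c(\lambda)\,\id N_x(\lambda)
$$
defines a tempered distribution $T_x\in\mathcal S'(\R)$; by definition this is the cosine transform of $N_x'$. On the other side, by the spectral theorem $\cos(t\sqrt\Delta)=\int_0^\infty\cos(\lambda t)\,\id E_\lambda$, and the ``diagonal of the wave kernel on $M$'', regarded as a distribution in $t$, is the functional $\psi\mapsto K_\psi(x,x)$, where $K_\psi$ is the Schwartz kernel of
$$
W_\psi:=\int_\R\cos(t\sqrt\Delta)\,\psi(t)\,\id t=\int_0^\infty\widehat\psi_c(\lambda)\,\id E_\lambda .
$$
The plan is therefore to establish the pointwise identity $K_\psi(x,x)=\langle T_x,\psi\rangle$ for every $\psi$.

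Two preliminary points are routine. First, $W_\psi$ really has a smooth kernel: because $\psi$ is compactly supported $\widehat\psi_c$ is rapidly decreasing, so $\Delta^N W_\psi=\int_0^\infty\lambda^{2N}\widehat\psi_c(\lambda)\,\id E_\lambda$ is bounded on $L^2(M)$ for every $N$, and elliptic regularity gives $K_\psi\in C^\infty(M\times M)$, exactly as in the argument that produced $e_\lambda\in C^\infty(M\times M)$ in the introduction. Second, it suffices to prove the kernel-level statement
$$
K_\psi(x,y)=\int_0^\infty\widehat\psi_c(\lambda)\,\id_\lambda e_\lambda(x,y),
$$
since setting $y=x$ turns the right-hand side into $\int_0^\infty\widehat\psi_c(\lambda)\,\id N_x(\lambda)=\langle T_x,\psi\rangle$. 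Testing against $f_1,f_2\in C_0^\infty(M)$ one has $\langle W_\psi f_1,f_2\rangle=\int_0^\infty\widehat\psi_c(\lambda)\,\id_\lambda\langle E_\lambda f_1,f_2\rangle$ and $\langle E_\lambda f_1,f_2\rangle=\iint e_\lambda(x,y)f_1(y)\overline{f_2(x)}\,\id\mathrm{vol}_\metricg(y)\,\id\mathrm{vol}_\metricg(x)$, so the displayed identity is a Fubini-type interchange of the spectral Stieltjes integral with the integration over $M\times M$.

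That interchange is the one genuine obstacle, because off the diagonal $\lambda\mapsto e_\lambda(x,y)$ need not be monotone. I would control it through the projection identity $(E_\lambda-E_\mu)^2=E_\lambda-E_\mu$ for $0\le\mu\le\lambda$: passing to kernels and restricting to the diagonal gives $N_x(\lambda)-N_x(\mu)=\int_M|e_\lambda(x,z)-e_\mu(x,z)|^2\,\id\mathrm{vol}_\metricg(z)\ge 0$, whence Cauchy--Schwarz in $L^2(M)$ yields
$$
|e_\lambda(x,y)-e_\mu(x,y)|^2\le\bigl(N_x(\lambda)-N_x(\mu)\bigr)\bigl(N_y(\lambda)-N_y(\mu)\bigr).
$$
Summing over a partition of $[0,\Lambda]$ and applying Cauchy--Schwarz once more bounds the total variation of $\lambda\mapsto e_\lambda(x,y)$ on $[0,\Lambda]$ by $\sqrt{N_x(\Lambda)N_y(\Lambda)}$, hence polynomially in $\Lambda$ and locally uniformly in $(x,y)$ by the Weyl law. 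Combined with the rapid decay of $\widehat\psi_c$, this makes the Jordan decomposition of $\id_\lambda e_\lambda(x,y)$ integrable against $\widehat\psi_c$ uniformly on compact subsets of $M\times M$, which is exactly what legitimises the interchange and proves the kernel identity. As $\psi\in C_0^\infty(\R)$ was arbitrary, the identity $K_\psi(x,x)=\langle T_x,\psi\rangle$ for all $\psi$ says precisely that the diagonal of the wave kernel, as a distribution in $t$, is the cosine transform of $N_x'$. (If one assumes the resolvent of $\Delta$ is compact this last point is immediate: $K_\psi(x,x)=\sum_j\widehat\psi_c(\lambda_j)|\varphi_j(x)|^2$ converges absolutely by the Weyl bound $|\varphi_j(x)|^2=O(\lambda_j^{n-1})$ and equals $\int_0^\infty\widehat\psi_c\,\id N_x$ directly, with no Stieltjes--Fubini argument needed.)
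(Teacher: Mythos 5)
Your argument is correct and follows the same route as the paper: the functional-calculus identity $\int_\R\cos(t\sqrt{\Delta})\,\psi(t)\,\id t=\int_0^\infty\widehat{\psi}_c(\lambda)\,\id E_\lambda$, tested weakly, passed to Schwartz kernels and restricted to the diagonal. The paper disposes of the interchange of the spectral Stieltjes integral with the integration over $M\times M$ by remarking that the identity is to be understood distributionally in $\lambda$; your total-variation bound $|e_\lambda(x,y)-e_\mu(x,y)|\le\sqrt{(N_x(\lambda)-N_x(\mu))(N_y(\lambda)-N_y(\mu))}$, obtained from $(E_\lambda-E_\mu)^2=E_\lambda-E_\mu$ and Cauchy--Schwarz, supplies precisely the justification the paper leaves implicit.
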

\begin{proof}
By the spectral theorem, for any $\lambda$, the projection $E_\lambda$ may be expressed in terms of its smooth integral kernel. By the definition of functional calculus for any $L^2$ function $f$ we have
$$
\langle f , \cos(\sqrt{\Delta}t) f \rangle = 
\int_\R \cos(\lambda t)\, \id \int_{M \times M} e_\lambda(x,y) f(y) \overline{f(x)} \,\id \mathrm{Vol_\funcg}(y)\id \mathrm{Vol_\funcg}(x).
$$
Of course we need to understand this equality in the distributional sense since $E_\lambda$ is a distribution in $\lambda$. When we consider the diagonal of $e_\lambda$ the proof is complete.
\end{proof}
Let us move to the description of the wave kernel.
As a model of hyperbolic space we will use the upper half space, i.e.
\[ \Hy^n = \{ x \in \R^n : x_n >0 \},\]
with the metric $\metricg = x_n^{-2}(\id x_1^2 + \id x_2^2+ \ldots + \id x_n^2)$. The Laplacian in this coordinate system is given by
\begin{eqnarray*}
\Delta & = & - x_n^2 ( \partial_{x_1}^2+ \ldots+  \partial_{x_n}^2)+ (n-2)x_n  \partial_{x_n},
\end{eqnarray*}
where by $\partial_{x_i}$ we mean $\partial/\partial_{x_i}$. Introduce a function $u \colon \Hy^n \times \Hy^n \to \R_+$ defined by
\begin{equation} \label{u}
 u(x,y)= \frac{\| x-y\|^2}{4 x_n y_n},
 \end{equation}
where $\| \, \cdot \, \|$ denotes the Euclidean norm.  The hyperbolic distance $\rho: \Hy^n \times \Hy^n \to \R_+$ is given by  the relation
\begin{equation} \label{rho}
1+2 u= \cosh(\rho).
\end{equation}
The function $u$ is commonly used as a replacement of the distance function as some of the formulae become simpler.

Suppose now that $f$ is a function on $\Hy^n$, $x \in \Hy^n$. We want to produce the rotational symmetrization of $f$ about $x$. Define
\[ 
f_x^{\circ}(\rho):= \frac{1}{A(\rho)} \int_{S_{\rho}(x)} f(s) \, \id \omega(s), \quad \rho > 0,
\] 
where  $S_{\rho}(x)$ is the geodesic sphere in $\Hy^n$ with the centre at $x$ and the radius $\rho$, $A(\rho)$ is its area, $\id \omega$ is the area element of  $S_{\rho}(x)$, wherever this expression makes sense. By definition, $f_x^{\circ}(0) =f(x)$. We call $f_x^{\circ}$ the radialization of $f$ about $x$.

Denote by $L_{\Hy^n}$ the shifted Laplace operator $\Delta_{\Hy^n} -\left( \frac{n-1}{2} \right)^2$. Consider the Cauchy problem for the shifted wave equation in hyperbolic $n$-space:
\begin{equation} \label{wave}
\left\{
\begin{array}{r c l}
-L_{\Hy^n} v &=& \frac{\partial^2 v}{\partial t^2}  ,\\
\left. v \right|_{t=0} &=& f, \\
 \left. \frac{\partial v}{\partial t} \right|_{t=0} &=& 0,
\end{array} \right.
\end{equation}
for a given $f \in L^2(\Hy^n)$. This problem may be solved using functional calculus: 
\begin{equation} \label{solution}
v(x,t) = \int_\R \cos(\sqrt{\lambda^2-(n-1)^2/4})\, \id E_\lambda f(x)=:\cos\left( \sqrt{L_{\Hy^n}} t \right) f(x).
\end{equation}
Because the family of operators $\cos( \sqrt{L_{\Hy^n}}t)$ is bounded on $L^2(\Hy^n)$ for all $t \in \R$, the Schwartz kernel theorem implies that it has an associated distributional kernel $\tilde{k}(x,y,t)$ which we call the shifted wave kernel.
On the other hand, there are well known formulae for the solution to (\ref{wave}) (see e.g. ~\cite{Grigor}):
\begin{eqnarray}
 \label{oddsol} v(x,t) &\!\!\!\!\!\!= & \!\!\!\!\!\! \frac{\partial_t}{(2m-1)!! } \left( \left( \frac{1}{\sinh t}\partial_t \right)^{m-1} \left( \sinh^{2m-1}(t) f_x^{\circ}(t) \right) \right)\!\!, \  n= 2m+1, \\
\label{evensol} v(x,t) &\!\!\!\!\!=&\!\!\!\!\!\! \frac{\partial_t}{2^{m+\frac{1}{2}} m!  }\! \int_0^t \!\! \frac{\sinh \rho}{\sqrt{\cosh t- \cosh \rho}} \! \left(\! \frac {1}{\sinh \rho} \partial_\rho \! \right)^{m}\!\! \left(  \sinh^{2m}(\rho) f_x^{\circ}(\rho) \right) \id \rho, \ n= 2m+2.
\end{eqnarray}
Since we do not need the exact formula for the shifted wave kernel but just local (in time) data, let us pair the solution to the wave equation with a test function $\funcg \in C^{\infty}_0(\R)$: 
\begin{eqnarray*}
\int_\R\!\! v(x,t)  \funcg( t) \, \id t &\!\!\!\! =&  \!\!\!\!
\begin{cases}
\frac{2(-1)^m\omega_{2m+1}^{-1}}{(2m+1)!! } \int_{\Hy^n}  f(y) \left( \frac{1}{\sinh \rho} \partial_\rho \right)^{m} \funcg(\rho) \,\id \mu(y), & \!\!n=2m+1,\\
\frac{(-1)^{m+1} \omega_{2m+2}^{-1}}{2^{m+\frac{1}{2}} (m+1)!  }  \int_{\Hy^n} f(y) \left( \frac{1}{\sinh \rho} \partial_\rho \right)^{m} \! \int_{\rho}^{\infty} \!\! \frac{\funcg'(t) \, \id t }{\sqrt{\cosh t- \cosh \rho}} \, \id \mu(y), & \!\!n=2m+2,
\end{cases}
\end{eqnarray*}
where $\rho$ denotes the hyperbolic distance from $x$ to $y$. Now, from the formula above and (\ref{solution}), one can easily extract the formula for the integral kernel of the operator \\$\int_\R \cos\left( \sqrt{L_{\Hy^n} }t \right) \funcg(t) \, \id t$, denote it by $\tilde{k}_{n,\funcg}$. Recalling that $\omega_n= \pi^{n/2}/\Gamma(n/2+1)$, we may summarise the calculations above by the following lemma:
\begin{lem}
The integral kernel $\tilde{k}_{n,g}$ is given by
\begin{eqnarray} \label{odd}
\tilde{k}_{2m+1,\funcg}(x,y) &= &(-2 \pi)^{-m} \left( \frac{1}{\sinh \rho} \partial_\rho \right)^{m} \funcg(\rho), \\
\label{even}
\tilde{k}_{2m+2,\funcg}(x,y) &=& \frac{\sqrt{2}}{ (-2 \pi)^{m+1}}   \left( \frac{1} {\sinh \rho} \partial_\rho \right)^{m} \!\!\! \int_{\rho}^{\infty} \!\!\!\!\!\!\!\ \frac{\funcg'(t) \, \id t }{\sqrt{\cosh t- \cosh \rho}} 
\end{eqnarray} 
where $\rho = \mathrm{dist(x,y)}$.
\end{lem}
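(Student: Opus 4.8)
The plan is to read the lemma off the displayed identity for $\int_\R v(x,t)\,\funcg(t)\,\id t$ established just above, combined with the functional--calculus representation (\ref{solution}). For completeness let me first recall how that identity is produced, since this is where the actual computation sits: one substitutes the classical formulas (\ref{oddsol}) and (\ref{evensol}) into $\int_\R v(x,t)\,\funcg(t)\,\id t$ and integrates by parts in $t$ so as to transfer every $t$-derivative off the radialization $f_x^{\circ}$ and onto $\funcg$; the remaining radial integral $\int_0^{\infty}(\cdots)\,f_x^{\circ}(t)\,\sinh^{n-1}(t)\,\id t$ is then rewritten, by passing to geodesic polar coordinates about $x$, as $(n\omega_n)^{-1}\int_{\Hy^n} f(y)\,(\cdots)(\rho(x,y))\,\id\mathrm{vol}_{\metricg}(y)$, using that the geodesic sphere $S_\rho(x)\subset\Hy^n$ has area $n\omega_n\sinh^{n-1}\rho$.

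For the integrations by parts I would first observe that only the even part of $\funcg$ contributes, since $\cos(\sqrt{L_{\Hy^n}}\,t)$ is even in $t$; hence we may assume $\funcg$ even and write $\int_\R=2\int_0^{\infty}$. The single identity that does all the work is that $D:=\tfrac{1}{\sinh\rho}\partial_\rho$ is skew-adjoint with respect to the measure $\sinh\rho\,\id\rho$ on $(0,\infty)$, i.e.\ $\int_0^{\infty}(DF)\,H\,\sinh\rho\,\id\rho=-\int_0^{\infty}F\,(DH)\,\sinh\rho\,\id\rho$ up to boundary terms; the boundary contribution at $\rho=\infty$ vanishes because $\funcg$ is compactly supported, and at $\rho=0$ it vanishes because the factor $\sinh^{2m-1}(\rho)$ (odd case) or $\sinh^{2m}(\rho)$ (even case) carried along vanishes to high enough order while $f_x^{\circ}$ extends to a smooth even function of $\rho$. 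In the odd case $n=2m+1$ the outer $\partial_t$ in (\ref{oddsol}) supplies one integration by parts and the $D^{m-1}$ supplies $m-1$ more, so altogether $m$ factors of $D$ land on $\funcg$, producing $D^m\funcg(\rho)$ tested against $\sinh^{2m}(\rho)\,\id\rho=\sinh^{n-1}(\rho)\,\id\rho$. In the even case $n=2m+2$ one first integrates the outer $\partial_t$ by parts to obtain $\funcg'$, then swaps the order of the iterated integral so that the Abel-type kernel $(\cosh t-\cosh\rho)^{-1/2}$ gets integrated against $\funcg'$ over $t\in[\rho,\infty)$ --- this produces exactly the inner integral $\int_\rho^\infty \funcg'(t)(\cosh t-\cosh\rho)^{-1/2}\,\id t$ of (\ref{even}), convergent since the integrand is $O((t-\rho)^{-1/2})$ at $t=\rho$ --- and then applies the skew-adjointness of $D$ another $m$ times.

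Granting the displayed identity, the extraction of the kernel is immediate. By (\ref{solution}) and linearity, $\int_\R v(x,t)\,\funcg(t)\,\id t=\bigl(\int_\R \cos(\sqrt{L_{\Hy^n}}\,t)\,\funcg(t)\,\id t\bigr)f(x)$, and by the Schwartz kernel theorem the right-hand side equals $\int_{\Hy^n}\tilde k_{n,\funcg}(x,y)\,f(y)\,\id\mathrm{vol}_{\metricg}(y)$. Since this holds for every $f\in C^\infty_0(\Hy^n)$, a dense set, and since the integrand on the right of the displayed identity depends on $y$ only through $\rho(x,y)$, the distributional kernel $\tilde k_{n,\funcg}(x,\cdot)$ must coincide with that radial expression; with $\funcg$ taken even it is in fact an honest smooth function, because then $\funcg(\rho)$ and all its $D$-derivatives are smooth across the diagonal.

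It remains only to simplify the constant, and this routine $\Gamma$-function bookkeeping is the sole point that needs care. In the odd case I would use $\omega_{2m+1}=\pi^{m+1/2}/\Gamma(m+3/2)$ together with $\Gamma(m+3/2)=2^{-(m+1)}(2m+1)!!\,\sqrt{\pi}$ to turn the coefficient $2(-1)^m\,\omega_{2m+1}^{-1}/(2m+1)!!$ into $(-2\pi)^{-m}$; in the even case I would use $\omega_{2m+2}=\pi^{m+1}/(m+1)!$ to turn $(-1)^{m+1}\,\omega_{2m+2}^{-1}/\bigl(2^{m+1/2}(m+1)!\bigr)$ into $\sqrt 2\,(-2\pi)^{-(m+1)}$, which yields exactly (\ref{odd}) and (\ref{even}). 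The main obstacle here is not conceptual but purely computational: keeping the signs, the double factorials and the powers of $2$ consistent through the $m$ successive integrations by parts, and verifying at each intermediate step that the $\rho=0$ boundary contributions genuinely vanish; once the skew-adjointness of $\tfrac{1}{\sinh\rho}\partial_\rho$ and the polar-coordinate rewriting are in hand, nothing deeper is involved.
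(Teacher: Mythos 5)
Your proposal is correct and follows the same route as the paper: the lemma is exactly the "summary of the calculations above," namely pairing the classical solution formulas (\ref{oddsol})--(\ref{evensol}) with $\funcg$, integrating by parts via the skew-adjointness of $\tfrac{1}{\sinh\rho}\partial_\rho$ with respect to $\sinh\rho\,\id\rho$, rewriting the radial integral in polar coordinates with $A(\rho)=n\omega_n\sinh^{n-1}\rho$, and reading off the kernel. Your constant bookkeeping checks out (e.g.\ $2(-1)^m\omega_{2m+1}^{-1}/(2m+1)!!=(-2\pi)^{-m}$ via $\Gamma(m+3/2)=2^{-(m+1)}(2m+1)!!\sqrt{\pi}$), and you supply the boundary-term and convergence details the paper leaves implicit.
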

\subsection{The shifted wave kernel  in $\Hy^n$ on the diagonal}
The question we want to answer is: how does the integral kernel $\tilde{k}_{n,\funcg}$ look on the diagonal, where $\rho=0$. As we mentioned in the introduction, the local counting function is smooth, therefore the wave kernel paired with some test function is also smooth.Thus it makes sense to talk about the value of $\tilde{k}_{n,\funcg}$ at a point. Since $\Hy^n$ is a homogeneous space, this value does not depend on $x$. We cannot examine this directly from (\ref{odd}), (\ref{even}), because the operator $ \frac{1}{\sinh \rho} \partial_\rho$ is singular for $\rho =0$. Moreover the integral does not satisfy the assumptions of the fundamental theorem of calculus there.

Instead, we manipulate $\tilde{k}_{n,\funcg}$. The case of $n=2$ it is well known (see e.g. (70) and (114) in~\cite{Marklof}
and~\cite{Iwaniec}):
\[ \tilde{k}_{2,\funcg}(x,x) = \frac{1}{4 \pi} \int_{\R} h(t)  \tanh( \pi t) t \, \id t,\]
where $h$ is the cosine transform of $\funcg \in C^\infty_0(\R)$, i.e.,
\begin{equation} \label{h}
h(t):= \int_\R \funcg(x) \cos( t x) \, \id x.
\end{equation}
It is easier to express $\tilde{k}_{n,g}$ in terms of the cosine transform, $h$, of $\funcg$ rather than in terms of $\funcg$. In addition, it turns out that the change of coordinates $u = \sinh^2(\rho/2)$ makes the calculations much simpler. Now, using (\ref{even}), (\ref{odd}) we obtain recursion formulas in terms of the point pair invariant $u$\begin{equation} \label{dotw}
\begin{cases}
\tilde{k}_{1,\funcg}(x,y)= \funcg(\rho(u)),\\
\tilde{k}_{2,\funcg}(x,y)= -\displaystyle{\frac{1 }{ 2\sqrt{2}}  \! \int_{\rho(u)}^{\infty} \!\!\!\!\ \frac{\funcg'(t) \, \id t }{\sqrt{\cosh t- 1-2u}}}, \\
\tilde{k}_{2m+1,\funcg}(x,y) = \left({-4 \pi} \right)^{-m}  {\partial_ u^m}\tilde{k}_1(x,y), & \textrm{for } m \in \N_+,\\
\tilde{k}_{2m+2,\funcg}(x,y) = \left({-4 \pi} \right)^{-m}  {\partial_ u^m}\tilde{k}_2(x,y), & \textrm{for } m \in \N_+.
\end{cases} 
\end{equation}
The explicit formula for $\tilde{k}_{2,g}$ is well known (see e.g.~\cite{Iwaniec}). It is expressed in terms of the  function $h$, of course, and the main tool to recover $h$ from $\tilde{k}$ for $n=2$ is the Mehler-Fock formula. The following theorem accomplishes this for general $n$. This result may also be proved using Harish-Chandra's Plancherel formula for spherical functions, see~\cite{Chavel}, p. 292.  For the sake of completeness and since we will derive through the proof a generalization of the Mehler-Fock formula, which is interesting in its own right, we give the proof here. Moreover, the transform which we developed allow us to give the estimate of the derivatives of the eigenfunctions.
\begin{lem} \label{eventrace}
Suppose $\funcg \in C_0^{\infty}(\R)$. The integral kernel, $\tilde{k}_{n,\funcg}$, on the diagonal is given by
\begin{equation*}
\tilde{k}_{n,\funcg}(x,x)= \begin{cases}
\displaystyle{\frac{1}{(4 \pi)^{m+1}m!} \int_{\R} h(t)  \tanh( \pi t) t \left( \frac{1}{2} + i t \right)_{\!\!m} \left( \frac{1}{2} - i t \right)_{\!\!m} \id t,} & n=2m+2\\
\displaystyle{\frac{1}{(2 \pi)^{m+1}( 2m-1)!!}  \int_\R   h(t) \left(i t \right)_m\left(-i t \right)_m \, \id t,} & n=2m+1,
\end{cases}
\end{equation*}
where $h$ is the cosine transform of $\funcg$ and $(x)_n$ is the Pochhammer symbol, i.e. $(x)_n= \Gamma(x+n)/ \Gamma(x)=x(x+1)\ldots(x+n-1)$. 
\end{lem}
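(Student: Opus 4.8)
The plan is to establish both formulas in parallel, starting from Fourier inversion. Since $\tilde k_{n,\funcg}$ is the integral kernel of an operator $F(L_{\Hy^n})$ whose symbol $F$ is determined by $h$ alone, we may assume $\funcg$ even, so that $\funcg(\rho)=\tfrac1{2\pi}\int_\R h(t)\cos(t\rho)\,\id t$ and hence $\funcg'(\rho)=-\tfrac1{2\pi}\int_\R h(t)\,t\sin(t\rho)\,\id t$. Everything is then done in the variable $u=\sinh^2(\rho/2)$, for which $\rho(u)=2\arcsinh\sqrt u$, and the engine of the computation is the pair of classical hypergeometric evaluations
\[
\cos\bigl(t\,\rho(u)\bigr)=\2F1\!\left(it,-it;\tfrac12;-u\right),\qquad
\sin\bigl(t\,\rho(v)\bigr)=2t\sqrt v\,\2F1\!\left(\tfrac12+it,\tfrac12-it;\tfrac32;-v\right),
\]
both obtained from $\2F1(a,-a;\tfrac12;\sin^2\theta)=\cos 2a\theta$ and $\2F1(a+\tfrac12,\tfrac12-a;\tfrac32;\sin^2\theta)=\sin(2a\theta)/(2a\sin\theta)$ by setting $\sin\theta=i\sqrt u$ (resp.\ $i\sqrt v$) and continuing analytically.

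For the odd case $n=2m+1$, formula~(\ref{dotw}) gives $\tilde k_{2m+1,\funcg}(x,x)=(-4\pi)^{-m}\bigl[\partial_u^m\funcg(\rho(u))\bigr]_{u=0}$. Inserting the first identity, $\funcg(\rho(u))=\tfrac1{2\pi}\int_\R h(t)\,\2F1(it,-it;\tfrac12;-u)\,\id t$, differentiating under the integral sign via $\tfrac{\id^m}{\id z^m}\2F1(a,b;c;z)=\tfrac{(a)_m(b)_m}{(c)_m}\2F1(a+m,b+m;c+m;z)$ and the chain rule $z=-u$, and then letting $u\downarrow0$ (so the shifted $\2F1$ becomes $1$), one gets $\bigl[\partial_u^m\funcg(\rho(u))\bigr]_{u=0}=\tfrac{(-1)^m}{2\pi}\int_\R h(t)\,\tfrac{(it)_m(-it)_m}{(1/2)_m}\,\id t$. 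Using $(1/2)_m=(2m-1)!!/2^m$ and $(-1)^m(-4\pi)^{-m}=(4\pi)^{-m}$, this rearranges into exactly the claimed odd-$n$ expression.

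For the even case $n=2m+2$ I will take the $m=0$ instance as known: the classical Mehler--Fock (equivalently Selberg/Harish--Chandra) formula, whose off-diagonal form is
\[
\tilde k_{2,\funcg}(x,y)=\frac1{4\pi}\int_\R h(t)\,t\tanh(\pi t)\,\2F1\!\left(\tfrac12-it,\tfrac12+it;1;-u\right)\id t ,
\]
the $\2F1$ being the conical function $P_{-1/2+it}(\cosh\rho)$. If a self-contained derivation is preferred, one inserts the Fourier representation of $\funcg'$ into the $m=0$ case of~(\ref{even}), interchanges the two integrals, and evaluates $\int_\rho^\infty\sin(ts)(\cosh s-\cosh\rho)^{-1/2}\,\id s$ by the Mehler--Dirichlet formula, the normalisation being pinned at $\rho=0$ by $\int_0^\infty\sin(2t\phi)/\sinh\phi\,\id\phi=\tfrac\pi2\tanh(\pi t)$. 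Then~(\ref{dotw}) gives $\tilde k_{2m+2,\funcg}(x,x)=(-4\pi)^{-m}\bigl[\partial_u^m\tilde k_{2,\funcg}(x,y)\bigr]_{u=0}$; moving $\partial_u^m$ onto the $\2F1(\,\cdot\,;-u)$ factor exactly as in the odd case, setting $u=0$, and using $(1)_m=m!$ produces the stated even-$n$ formula.

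The only genuinely delicate point is the $n=2$ input, i.e.\ making the Plancherel density $t\tanh(\pi t)$ appear; after the reductions above this amounts to the single closed evaluation $\int_0^\infty\sin(2t\phi)/\sinh\phi\,\id\phi=\tfrac\pi2\tanh(\pi t)$. All remaining steps are bookkeeping with Pochhammer symbols, together with the routine justification that the integrals converge and that differentiation under the integral sign is legitimate --- both consequences of the Schwartz decay of $h$ and the uniform boundedness, for real spectral parameter, of the hyperbolic spherical functions that arise after each differentiation. Carried out off the diagonal, the same argument in fact yields the advertised generalisation of the Mehler--Fock transform: $\tilde k_{n,\funcg}(x,y)$ expressed as an explicit integral of $h$ against $\2F1\bigl(\tfrac{n-1}2+it,\tfrac{n-1}2-it;\tfrac n2;-u\bigr)$.
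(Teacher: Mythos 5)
Your proposal is correct and follows essentially the same route as the paper's proof: both reduce to the base cases $n=1$ and $n=2$ via the recursion (\ref{dotw}) and the hypergeometric differentiation formula (\ref{diff}), identify the $n=1$ kernel with the cosine transform through the identity (\ref{cosinus}), and import the classical $n=2$ Mehler--Fock inversion (\ref{inverse}) from the literature. The only inessential difference is that the paper additionally records the forward transform (\ref{H}) via radial eigenfunctions, which it needs for Theorem~\ref{transform} but not for the lemma itself.
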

\begin{proof}
Assume that $\varphi$ is a generalised eigenfunction of the Laplacian and is radial about $x$. The eigenvalue equation, $\Delta \varphi = \lambda^2 \varphi$, in geodesic polar coordinates about a point $x$ is:
\[
\varphi''(\rho)+ (n-1) \coth(\rho) \varphi'(\rho) + \lambda^2 \varphi(\rho)=0.
\]
It is a second order linear differential equation, so its general solution is a linear combination of two independent solutions, $f_1,\ f_2$. Standard theory shows that we may set $f_1(0)=1$, where $f_1$ is an entire function of $\rho$, and $f_2$ is well defined for $\rho >0 $ and has a singularity for $\rho=0$ of the logarithmic type for $n=2$ and of type $\rho^{2-n}$ for $n>2$.  \\
After a substitutions $u= 1/2 (\cosh(\rho)-1)$, $ \lambda^2 = (n-1)^2/4 + t^2$ we obtain
\begin{equation} \label{eigenequation}
u(u+1) \varphi''(u) + n(n +1/2) \varphi'(u) + ((n-1)^2/4 + t^2)\varphi(u) = 0.
\end{equation}
Recall that the hypergeometric function $F(\lambda, \beta, \gamma, z)$ is defined to be the solution of the differential equation
\[ z(1-z) F''(z) - ((\alpha+\beta+1)- \gamma) F'(z)- \alpha \beta F(z) =0, \]
with initial condition $F(0)=1$. Therefore, the function 
\[
F_{t,n}(u):=\2F1((n-1)/2 +it,(n-1)/2 -it ,n/2, -u)
\]
solves $(\ref{eigenequation})$ with $F_{t,n}(0)=1$. This implies that the radialization about $x$ of the generalised eigenfunction, $\varphi$, corresponding to the eigenvalue $\lambda^2= (n-1)^2/4 +t^2$ is given by
\[ 
\varphi_x^{\circ}(\rho)= F_{t,n}(u(x,y))\varphi(x).
\]
Recall that the function $\tilde{k}_n$ depends only on $u$, so we may write 
\begin{equation} 
\int_{\Hy^n} \tilde{k}_n(x,y) \varphi(y)\, \id \mu (y)= \int_{\Hy^n}\tilde{k}_n(x,y) \varphi_x^{\circ}(\rho)\, \id \mu (y)=\varphi(x) \int_{\Hy^n} \tilde{k}_n(u) F_{t,n}(u)\, \id \mu (y),
\end{equation}
where $\tilde{k}_n(u)$, $F_{t,n}(u)$ denote $\tilde{k}_n(x,y)$, $F_{t,n}(u(x,y))$  respectively. Let $x= (0,\ldots, 0,1)$ and introduce geodesic polar coordinates around $x$. The spectral theorem implies that 
\begin{equation}\label{H}
h(t)=\frac{\omega_{n}}{n} 2^{n-1} \int_{0}^{\infty} \tilde{k}_n( u) \2F1 \left(\frac{n-1}{2} + i t, \frac{n-1}{2} - i t , \frac{n}{2}, -u \right) (u^2+u)^{(n-2)/2} \,  \id u,
\end{equation}
where $\omega_n$ denotes the volume of an Euclidean unit ball in $\R^n$ and $h$ is as before the cosine transform of $\funcg \in C_0^{\infty}(\R)$. The inverse transform for $n=2$ is as follows (see e.g.\cite{Iwaniec}):
\begin{equation} \label{inverse}
\tilde{k}_2(x,y)= \frac{1}{4 \pi} \int_{\R} h(t) \2F1\left(\frac{1}{2}+ i t,\frac{1}{2}- i t, 1, -u(x,y)\right) \tanh( \pi t) t \, \id t, \quad \textrm{for } n=2. 
\end{equation}
Note that the hypergeometric function $\2F1(\alpha,\beta,\gamma,z)$ has an analytic continuation over the plane $\C$ cut along $[1,+\infty)$, given by the integral representation:
\[
\2F1(\alpha, \beta, \gamma,z)= \frac{\Gamma(\gamma)}{\Gamma(\beta) \Gamma(\gamma-\beta)} \int_0^1 t^{\beta-1} (1-t)^{\gamma-\beta-1} (1-tz)^{-\alpha} \id t.
\] 
Moreover the $m$-th derivative of $\2F1$ is 
\begin{equation}\label{diff}
\partial_z^m\2F1(\alpha, \beta, \gamma,z)= \frac{\prod_{j=0}^{m-1} (\alpha+j) \prod_{j=0}^{m-1}( \beta+j)}{\prod_{j=0}^{m-1} (\gamma+j)} \2F1(\alpha+im, \beta+m, \gamma+m,z).
\end{equation}
Since the hypergeometric function satisfies the initial condition $F_{t,n}(0)=1$, the recursion formula (\ref{dotw}) and (\ref{inverse}) end the proof for even $n$. For odd $n$ note that 
\begin{equation}\label{cosinus}
\2F1(it, -it ,1/2, -u)= \cos(2 t \arcsinh\sqrt{u}).
\end{equation}
Substitute $\rho= 2 \arccosh \sqrt{u}$ in  the integrals (\ref{H}) and (\ref{odd}) for $m=0$. Then one can easily recognise that these formulas simplify to the cosine transform and its inverse. Now use identities (\ref{H}) and (\ref{cosinus}) to get that 
\begin{equation}
\tilde{k}_1(x,y)= \frac{1}{2 \pi}  \int_\R h(t) \2F1(it, -it ,1/2, -u(x,y)) \, \id t.
\end{equation} 
 The recursion formula (\ref{dotw}) and the differential identity (\ref{diff}) imply
$$
\tilde{k}_{2m+1}(x,y)= \frac{(2 \pi)^{-m-1}}{(2m-1)!!} \int_\R h(t) (it)_m(-it)_m  \2F1(m+it,m-it,m+1/2,-u(x,y))\, \id t.
$$
When we take into account the initial condition $F_{t,n}(0)=1$ the proof is complete.
\end{proof}
The proof of Theorem~\ref{eventrace} gives us class of transformations which contains the cosine  transform as well as the Mehler-Fock transform. Further, we know that the class of functions of interest to us are in the domain of these transforms.
\begin{thm}[Generalised Mehler-Fock formula] \label{transform} 
Suppose  $\funcg \in C_0^{\infty}(\R)$ and $\funcg$ is even. Define $f_e$, $f_o$ by
\[ 
f_e(u)= -\frac{1 }{ 2\sqrt{2}}  \! \int_{\rho(u)}^{\infty} \!\!\!\!\ \frac{\funcg'(t) \, \id t }{\sqrt{\cosh t- 1-2u}}, \quad f_o(u)= \funcg(\rho(u)) 
\]
where $\rho(u)= 2 \arccosh \sqrt{u}$. Then the following inverse formulae hold:
\begin{eqnarray}\label{inversen}
 f_e(u)&\!\!\!\!\!=&\!\!\!\!\! \Gamma(m+1)^{-2} \int_{\R}  \int_{0}^{\infty} f_e\left( v \right)  \2F1 \left(\frac{1}{2} +m+ i t, \frac{1}{2} +m - i t , m+1, -v \right) (v^2+v)^{m} \,  \id v \times \nonumber \\
&&\!\!\!\!\!\!\! \2F1\left( \frac{1}{2}+m+ i t,\frac{1}{2}+ m- i t, m+1, -u \right) \tanh( \pi t) t \left( \frac{1}{2} + i t \right)_{\!\!m} \left( \frac{1}{2} - i t \right)_{\!\!m} \, \id t, \\
 f_o(u)&\!\!\!\!\!=&  \!\!\!\!\! \Gamma\left(m+\frac{1}{2}\right)^{-2}\int_{\R}  \int_{0}^{\infty} f_o \left( v \right)  \2F1 \left(m+ i t, m - i t , m+\frac{1}{2}, -v \right) (v^2+v)^{m-\frac{1}{2}} \,  \id v \times \nonumber \\
&&\!\!\!\!\!\!\! \2F1\left( m+ i t, m- i t, m+\frac{1}{2}, -u \right) \left(i t \right)_m \left(-i t \right)_m  \, \id t,
\end{eqnarray}
for $m \in \N_0$, where $(x)_n$ is the Pochhammer symbol, i.e. 
$$
(x)_n= \Gamma(x+n)/ \Gamma(x)=x(x+1)\ldots(x+n-1).
$$
\end{thm}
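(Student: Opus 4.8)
The plan is to recognise the two maps appearing in the statement as the forward transform~(\ref{H}) together with the inverse transform extracted in the proof of Lemma~\ref{eventrace}, specialised to $n=2m+2$ (acting on $f_e$) and to $n=2m+1$ (acting on $f_o$), and then to check that this pair of transforms composes to the identity on $f_e$ and $f_o$. First I would record that by~(\ref{dotw}) the functions $f_e$ and $f_o$ are precisely the seed kernels $\tilde k_{2,\funcg}$ and $\tilde k_{1,\funcg}$ written in the variable $u$; since $\funcg\in C_0^\infty(\R)$, the defining integrals run over a finite interval and have only an integrable singularity, so $f_e$ and $f_o$ are smooth and compactly supported in $u$. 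This makes every integral below absolutely convergent and justifies the interchange of the $v$- and $t$-integrations: the cosine transform $h$ of $\funcg$ is entire of exponential type and Schwartz on $\R$, whereas the Pochhammer weights $\left(\tfrac12\pm it\right)_m$ and $(\pm it)_m$ grow only polynomially in $t$.

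For the base case $m=0$ the two identities are classical. In the odd case, the identity~(\ref{cosinus}) together with the change of variables $\rho=\rho(u)$ turns both the inner and the outer integral into the Fourier cosine transform and its inverse applied to the even function $\funcg$, so the assertion reduces to the cosine inversion theorem. In the even case, the $m=0$ formula is the Mehler--Fock inversion formula coming from~(\ref{H}) and~(\ref{inverse}) (see~\cite{Iwaniec}), which applies because $f_e$ lies in its domain.

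For general $m$ I would bootstrap from $m=0$. The tool is the shift/Rodrigues identity obtained by combining~(\ref{diff}) with the Euler transformation of $\2F1$: writing $\Phi_{t,N}(v)=\2F1\!\left(\tfrac{N-1}{2}+it,\tfrac{N-1}{2}-it,\tfrac N2,-v\right)$, equation~(\ref{diff}) gives $\Phi_{t,N+2m}(v)=\tilde c_m(t)\,\partial_v^{m}\Phi_{t,N}(v)$ with $\tilde c_m(t)=(-1)^m(\tfrac N2)_m\big/\!\left[(\tfrac{N-1}{2}+it)_m(\tfrac{N-1}{2}-it)_m\right]$, and this has the ``integrated'' companion $\partial_v^{m}\!\left[(v^2+v)^{m}\Phi_{t,2m+2}(v)\right]=m!\,\Phi_{t,2}(v)$ (with weight $(v^2+v)^{m-1/2}$ in the odd case). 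Substituting the first identity into the inner integral of~(\ref{inversen}), integrating by parts $m$ times --- the boundary terms vanish because $f_e$ is compactly supported and the weight $(v^2+v)^{m}$ vanishes to order $m$ at $v=0$ --- and substituting it again in the outer integral, the double integral collapses to the $m=0$ case. The Pochhammer factors $\left(\tfrac12\pm it\right)_m$ are absorbed in this step: since $\Phi_{t,2}$ is a generalised eigenfunction of the Jacobi operator $\mathcal L=-\partial_v\big(v(1+v)\partial_v\big)$ with eigenvalue $\tfrac14+t^2$, one has $\left(\tfrac12+it\right)_m\left(\tfrac12-it\right)_m=\prod_{j=0}^{m-1}\!\big((\tfrac14+t^2)+j(j+1)\big)$, which is the eigenvalue of $\prod_{j=0}^{m-1}\!\big(\mathcal L+j(j+1)\big)$ on $\Phi_{t,2}$; moving this operator off $\Phi_{t,2}$ using the $dv$-self-adjointness of $\mathcal L$ cancels the leftover Pochhammer factor. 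What remains is the classical inversion of the previous paragraph applied to a function built from $f_e$ (resp.\ $f_o$) and its first $m$ derivatives, and $f_e(u)$ (resp.\ $f_o(u)$) reappears once the constants are collected. The same identities are, alternatively, a concrete form of Harish--Chandra's Plancherel formula for radial functions on $\Hy^n$, see~\cite{Chavel}.

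I expect the main obstacle to be bookkeeping rather than structure: one must force all the normalising constants --- the volumes $\omega_n=\pi^{n/2}/\Gamma(n/2+1)$, the double factorials $(2m-1)!!$ and $(2m+1)!!$, and the two species of Pochhammer symbols --- to collapse to exactly $\Gamma(m+1)^{-2}$ and $\Gamma(m+\tfrac12)^{-2}$. The shape of the formula, however, is rigidly determined by the eigenfunction equation for $\Phi_{t,n}$ and by the shift operators, so once the $m=0$ inversions and the Rodrigues identity are established the remainder is a (somewhat lengthy) computation.
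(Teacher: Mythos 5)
Your proposal is correct in outline and identifies exactly the ingredients the paper uses, but it assembles them along a different axis. The paper offers no separate proof of Theorem~\ref{transform}: it is read off from the proof of Lemma~\ref{eventrace} by substituting the forward transform~(\ref{H}) --- which is obtained for \emph{every} $n$ at once from the spectral theorem on $\Hy^n$ (the inner integral against $\2F1(\ldots,-v)(v^2+v)^{(n-2)/2}$ is $h(t)$ up to an explicit constant, independently of $m$) --- into the inverse formulas, which are produced from the classical $n=1,2$ inversions by applying $(-4\pi)^{-m}\partial_u^m$ in the \emph{outer} variable via the recursion~(\ref{dotw}) and the differentiation identity~(\ref{diff}). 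Because both halves are consequences of the spectral decomposition of $\cos(\sqrt{L_{\Hy^n}}\,t)$, the composition is the identity automatically and no hand verification is needed. You instead verify the composition directly, pushing the $m$-dependence into the \emph{inner} variable by a Rodrigues-type identity and $m$-fold integration by parts in $v$; your shift identity $\partial_v^m\bigl[(v^2+v)^m\,\2F1(m+\tfrac12+it,m+\tfrac12-it,m+1,-v)\bigr]=m!\,\2F1(\tfrac12+it,\tfrac12-it,1,-v)$ is a correct contiguous-derivative formula and the boundary terms do vanish as you say. What this buys is self-containedness: you never need the spectral theorem for general $n$, only the classical Mehler--Fock and cosine inversions. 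What it costs is the bookkeeping you anticipate, and one step that needs more care than your sketch suggests: after substituting $\Phi_{t,2m+2}=\tilde c_m(t)\,\partial^m\Phi_{t,2}$ in \emph{both} integrals, the surviving $t$-dependent factor is $m!^2\big/\bigl[(\tfrac12+it)_m(\tfrac12-it)_m\bigr]$, i.e.\ the Pochhammer product sits in the \emph{denominator}, so "moving $\prod_j(\mathcal L+j(j+1))$ off $\Phi_{t,2}$ by self-adjointness" cannot be applied as stated (that produces the factor in the numerator). The fix is to use the shift identity asymmetrically --- Rodrigues form in the inner integral (which costs no Pochhammer factor, only $m!$) and the $\partial_u^m$ form in the outer integral (which cancels the single Pochhammer product already present in the measure); this is precisely the asymmetry built into the paper's argument, where the inner integral is handled once and for all by~(\ref{H}). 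With that adjustment your route closes.
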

\subsection{The wave kernel near $x$ where the manifold is hyperbolic}
As we mentioned at the beginning of this section, in order to estimate the local counting function on the manifold $M$ we need to know the diagonal of the wave kernel on $M$. So far we obtained just the shifted wave kernel on $\Hy^n$. In this section we show how one may move from $\Hy^n$ to $M$ and change the shifted wave kernel to the wave kernel.

Let $(M^n,\metricg)$ be an $n$-dimensional Riemannian manifold, we say that $M^n$ is hyperbolic around $x \in M$ if there exists a positive radius $\rho$ such that the open metric ball, $B_\rho(x)$, centred at $x$ is isometric to the ball in hyperbolic $n$-space. Let us denote by $d(x)$  twice the maximal radius of that ball.

In order to distinguish the integral kernel of $  \int_\R \cos\left(\sqrt{ L_{M^n}} t \right) \funcg(t) \, \id t$ from the integral kernel of $  \int_\R \cos\left(\sqrt{ L_{\Hy^n}} t \right) \funcg(t) \, \id t$ we will be denote it by $k_{n,\funcg}(x,y)$. The unit wave propagation speed implies that the solution of the shifted wave equation (\ref{wave}) in $\Hy^n$ coincides with the solution in $B_\rho(x) \subset M^n$ for $t \in (-d(x),d(x))$, whenever initial function $f$ coincides on isometric balls and vanishes outside. In other words 
$
\tilde{k}_n(x,y) = k_n(x,y),
 $
whenever 
\begin{equation} \label{assumption}
\mathrm{supp}( \funcg) \subset (-d(x),d(x)).
\end{equation}
Therefore we can move from $\Hy^n$ to $M^n$ for sufficiently supported function. In addition, if $M^n$ is compact, $\kappa_j^2$ is the shifted eigenvalue of $\Delta_{M^n}$, satisfaing $\kappa_j^2= \lambda_j^2 -(n-1)^2/4$, $ \mathrm{arg}(\kappa_j) \in [-\pi/2, \pi/2)$, where $\Delta_{M^n} \varphi_j=\lambda_j^2 \varphi_j$. Then the integral kernel $k_n$ is just
\[ \sum_{j=0}^{\infty} h( \kappa_j) \varphi_j(x)\overline{ \varphi_j(y)},\]
which converges absolutely, uniformly in $x, y \in M^n$ for $\funcg \in C^{\infty}_0(\R)$. 

Let us assume from this point that  $\funcg \in C^{\infty}_0(-d(x),d(x))$. By Lemma~\ref{lem1}, the cosine  transform of the derivative of $N_{n,x}$ tested against a function $\funcg$  coincides with the diagonal of the wave kernel on $M^n$ tested against $\funcg$. Introducing functions $\tilde{h}, \tilde{\funcg}$ given by
 \begin{eqnarray}\label{tildeh}
\tilde{h}(t)= h(\sqrt{t^2+(n-1)^2/4}), \\
\tilde{\funcg}(t):= \frac{1}{2 \pi} \int_\R \tilde{h}(x) \cos(x t) \, \id x,  \label{tildeg}
\end{eqnarray}
 we have
\begin{equation} \label{24}
  \int_\R \int_\R \cos(\lambda t) N_{n,x}'(\lambda) \, \id \lambda\, \funcg(t) \, \id t = \int_\R \tilde{k}_n(t,x,x)\tilde{g}(t) \, \id t.
  \end{equation} 
It is surprising , if we look at formulas (\ref{tildeh}), (\ref{tildeg}), but it turns out that $\tilde{\funcg}$ is also smooth and compactly supported as well as $\funcg$, as it is proved in the technical lemma below. Therefore we can use Lemma~\ref{eventrace} applied to $\tilde{k}_{n,\tilde{\funcg}}$.\begin{lem} \label{lem}
Suppose that $\funcg$ is an even smooth and compactly supported function, such that $\mathrm{supp}(\funcg) \subset (-a,a)$ then $\tilde{\funcg}$ defined by (\ref{tildeh}), (\ref{tildeg}) is also smooth, compactly supported and even function such that $\mathrm{supp}(\tilde{\funcg}) \subset (-a,a)$.
\end{lem}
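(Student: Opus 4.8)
\emph{Proof proposal.} The plan is to recognise $\tilde\funcg$ as the inverse Fourier transform of $\tilde h$ and then to show that $\tilde h$ inherits a Paley--Wiener estimate of the same exponential type as $h$. Since $\funcg\in C_0^\infty(-a,a)$, its support is a compact subset of $(-a,a)$, hence contained in $[-a',a']$ for some $a'<a$. Because $\funcg$ is even, its cosine transform $h$ coincides with its Fourier transform, so the Paley--Wiener theorem provides an entire extension of $h$ satisfying
\[
 |h(z)| \le C_N (1+|z|)^{-N}\, e^{a'|\mathrm{Im}\, z|}, \qquad z\in\C,
\]
for every $N\in\N$; moreover $h$ is even. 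Dually, Fourier inversion shows that $(\ref{tildeg})$ presents $\tilde\funcg$ as the inverse Fourier transform of $\tilde h$, so it suffices to prove that $\tilde h$ extends to an entire function that is rapidly decreasing on $\R$ and of exponential type at most $a'$; evenness of $\tilde\funcg$ will then follow from evenness of $\tilde h$, and $\mathrm{supp}(\tilde\funcg)\subset[-a',a']\subset(-a,a)$.

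First I would produce the entire extension of $\tilde h$. Since $h$ is even and entire its Taylor expansion at the origin contains only even powers, so $h(w)=H(w^2)$ for a uniquely determined entire function $H$ (the series for $H$ has infinite radius of convergence because that of $h$ does). Writing $c=(n-1)/2$, the function $\tilde h(z):=H(z^2+c^2)$ is then entire, even, and agrees on $\R$ with $t\mapsto h(\sqrt{t^2+c^2})$, i.e.\ with $(\ref{tildeh})$.

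The heart of the matter is the elementary inequality
\[
 \bigl|\mathrm{Im}\,\sqrt{z^2+c^2}\,\bigr| \le |\mathrm{Im}\, z|\qquad(z\in\C),
\]
valid for either branch of the square root. Setting $w=\sqrt{z^2+c^2}=p+iq$ and $z=x+iy$, the relations $p^2-q^2=x^2-y^2+c^2$ and $pq=xy$ allow one to eliminate $p$ and reduce the claim $q^2\le y^2$ to a quadratic inequality in $q^2$; the difference of the two sides comes out to $4y^2c^2\ge 0$, which settles it (the degenerate cases $xy=0$ being checked by hand). Since $h$ is even, $|\tilde h(z)|=|h(w)|$ for $w=\sqrt{z^2+c^2}$, and feeding the inequality above, together with $|w|^2=|z^2+c^2|\ge |z|^2-c^2$ (so $|w|\ge \tfrac12|z|$ once $|z|\ge 2c$, the bounded range $|z|\le 2c$ being handled by continuity), into the Paley--Wiener bound for $h$ yields
\[
 |\tilde h(z)| \le C_N'\,(1+|z|)^{-N}\, e^{a'|\mathrm{Im}\, z|}, \qquad z\in\C,
\]
for every $N$.

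This last estimate is exactly the Paley--Wiener characterization of the Fourier transform of a smooth function supported in $[-a',a']$, so $\tilde\funcg\in C_0^\infty(\R)$ with $\mathrm{supp}(\tilde\funcg)\subset[-a',a']\subset(-a,a)$; and $\tilde h$ even forces $\tilde\funcg$ even. The only real obstacle is the square-root imaginary-part inequality: it is precisely the statement that the substitution $t\mapsto\sqrt{t^2+c^2}$ does not enlarge the exponential type, and once it is in hand everything else is routine bookkeeping with the Paley--Wiener theorem.
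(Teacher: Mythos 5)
Your proof is correct and follows essentially the same route as the paper: the Paley--Wiener theorem in both directions, with the key step being control of $|\Im\sqrt{z^2+c^2}|$ for $c=(n-1)/2$. Your inequality $|\Im\sqrt{z^2+c^2}|\le|\Im z|$ is in fact sharper than the paper's $|\Im(z-\sqrt{z^2+c^2})|\le c$ (which only yields $|\Im\sqrt{z^2+c^2}|\le|\Im z|+c$ and costs an extra factor $e^{ac}$ in the constant), and your device of writing $h(w)=H(w^2)$ with $H$ entire cleanly sidesteps the branch-of-square-root bookkeeping that the paper handles by an explicit branch choice.
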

\begin{proof}
The Paley-Wiener theorem implies that the function $h$ is entire. For the sake of concreteness let us define this square root as  a square root on the Riemann surface and if $t \in i(0,(n-1)/2)$ then $\mathrm{arg}(t^2 + (n-1)^2/4) = 0$, if $t \in i(-(n-1)/2,0)$ then $\mathrm{arg}(t^2 + (n-1)^2/4) = -2 \pi$.

Let us define the auxiliary function $\gamma \colon \C \to \R$
\[ \gamma(t)= \frac{1+ (t^2+(n-1)^2/4)^{\frac{1}{2}}}{1+|t|}.\]
It turns out that this function is continuous and has a maximal value $(n+1)/2$ at 0, a minimal value at $t= i(n-1)/{2}$ and
\begin{equation} \label{minimum}
\inf_{t \in \C} \gamma(t) = \frac{2}{n+1}.
\end{equation}
Our choice of branch of the square root implies that the function $|\Im( t-(t+(n-1)^2 /4)^{1/2})|$ has  symmetries along the real and an imaginary axis. Moreover for $t=a+i b$, where $a,b \geq 0$ we have
\begin{eqnarray*}
| \Im( t-(t+(n-1)^2 /4)^{1/2}) | =&\\
&\!\!\!\!\!\!\!\!\!\!\!\!\!\!\!\!\!\!\!\!\!\!\!\!\!\!\!\!\!\!\!\!\!\!\!\!\!\!\!\!\!\!  b- \sqrt{ \frac{1}{2} \sqrt{ \left( a^2-b^2+\left( \frac{n-1}{2} \right)^2 \right)^2 + 4 a^2 b^2}-\frac{1}{2} \left( a^2-b^2+\left( \frac{n-1}{2} \right)^2 \right) }.
\end{eqnarray*}
This implies the following estimate 
\[
|\Im( t-(t+(n-1)^2 /4)^{1/2})| \leq \frac{n-1}{2},
\]
where the maximal value is reached for $t= \pm i(n-1)/2$. The Paley-Wiener theorem, triangle inequality and the estimate (\ref{minimum}) imply  
\begin{equation}
 |\tilde{h}(t)| \leq \frac{c_{\nu} e^{a|\Im( \sqrt{t^2+(n-1)^2/4})|}}{(1+\sqrt{t^2+(n-1)^2/4})^{\nu}} \leq \frac{c_{\nu} 2^{\nu} e^{\frac{a(n-1)}{2}} }{(n+1)^{\nu}} \frac{e^{a |\Im(t)|}}{(1+|t|)^{\nu}}.
 \end{equation}
 Therefore $\tilde{g} \in C_0^{\infty}(-a,a)$.
\end{proof}
This allows us to use already gathered information about the shifted wave kernel on $\Hy^n$ to control the local counting function on $M^n$. 
\begin{lem}\label{counting}
The cosine transform of the derivative of $N_{n,x}$ coincides in the interval $(-d(x),d(x))$ with the cosine transform of the derivative of $F_{n}$ for $n \in \N_+$, $n\geq 2$ where
\begin{eqnarray}
F'_{2}( \tau) &=& \frac{1}{2 \pi}H \! \left( \!\tau - \!\frac{1}{2} \right) \tanh \left(\!\pi \sqrt{\tau^2-\frac{1}{4}}\right) \! \tau \!,\nonumber \\
F_{3}'(\tau) & =& \frac{H(\tau-1)}{2 \pi^2}\tau \sqrt{\tau^2-1} ,\nonumber \\
F'_{2m+2}( \tau) &=& \frac{2 H \! \left( \tau -\! m-\!\frac{1}{2} \right)}{(4 \pi)^{m+1} m!} \tanh \left(\!\pi \sqrt{\tau^2-\left(m+\frac{1}{2}\right)^{\! 2}}\right) \! \tau \prod_{l=0}^{m-1} \left(\tau^2 -m^2+l^2-m+l\right),\nonumber \\
F_{2m+1}'(\tau) & =& \frac{2 H(\tau-m)}{(2 \pi)^{m+1}(2m-1)!!}\tau \sqrt{\tau^2-m^2} \prod_{l=1}^{m-1}\left(\tau^2-m^2+l^2\right),\nonumber \\
F_{n}(0) &=& 0 \nonumber.
\end{eqnarray}  
\end{lem}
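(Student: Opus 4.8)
\emph{Proof idea.} The plan is to feed the bridge identity~(\ref{24}) into Lemma~\ref{eventrace} and then to carry out one change of variables that undoes the spectral shift. Fix an even test function $\funcg\in C_0^\infty(-d(x),d(x))$ and let $h$ be its cosine transform~(\ref{h}); since the cosine transform of a tempered distribution is automatically even in the dual variable, it is enough to verify the asserted identity after pairing both cosine transforms with such $\funcg$. By~(\ref{24}),
\[
\int_\R\Big(\int_\R\cos(\lambda t)\,N_{n,x}'(\lambda)\,\id\lambda\Big)\funcg(t)\,\id t
=\int_\R\tilde{k}_n(t,x,x)\,\tilde{\funcg}(t)\,\id t
=\tilde{k}_{n,\tilde{\funcg}}(x,x),
\]
while Lemma~\ref{lem} guarantees $\tilde{\funcg}\in C_0^\infty(-d(x),d(x))$, so the finite-propagation-speed identity $\tilde{k}_n=k_n$ applies and, simultaneously, Lemma~\ref{eventrace} may be used on $\tilde{k}_{n,\tilde{\funcg}}$. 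By~(\ref{tildeh})--(\ref{tildeg}) the cosine transform of $\tilde{\funcg}$ is exactly $\tilde{h}(t)=h\big(\sqrt{t^2+(n-1)^2/4}\,\big)$, so Lemma~\ref{eventrace} writes $\tilde{k}_{n,\tilde{\funcg}}(x,x)$ as an explicit integral of $h\big(\sqrt{t^2+(n-1)^2/4}\,\big)$ against $\tanh(\pi t)\,t\,(\tfrac12+it)_m(\tfrac12-it)_m$ when $n=2m+2$, and against $(it)_m(-it)_m$ when $n=2m+1$.

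Next I would substitute $\tau=\sqrt{t^2+(n-1)^2/4}$, i.e.\ $t^2=\tau^2-(n-1)^2/4$ and $t\,\id t=\tau\,\id\tau$, with $\tau$ running over $\big[(n-1)/2,\infty\big)$. Each integrand above is even in $t$ --- $\tanh(\pi t)\,t$ is even, $(\tfrac12+it)_m(\tfrac12-it)_m=\prod_{l=0}^{m-1}\big((l+\tfrac12)^2+t^2\big)$ is even, and $(it)_m(-it)_m=\prod_{l=0}^{m-1}(l^2+t^2)$ is even --- so $\int_\R$ collapses to $2\int_0^\infty$, giving the overall factor $2$ in the formulas for $F_n'$. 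The crux of the calculation is that the Jacobian $t\,\id t=\tau\,\id\tau$ precisely consumes the offending factor of $t$: in the even case this is the explicit $t$ in $\tanh(\pi t)\,t$, and in the odd case ($m\ge1$) it is the factor $t^2$ hidden in $(it)_m(-it)_m=t^2\prod_{l=1}^{m-1}(l^2+t^2)$. After the substitution the lower endpoint reappears as the Heaviside factor $H\big(\tau-(n-1)/2\big)$, $\tanh(\pi t)$ becomes $\tanh\big(\pi\sqrt{\tau^2-(n-1)^2/4}\,\big)$, and the factor $t$ left over in the odd case becomes $\sqrt{\tau^2-m^2}$.

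Finally one checks that the Pochhammer products turn into the stated polynomial products: with $(n-1)/2=m+\tfrac12$ the $l$-th factor $(l+\tfrac12)^2+t^2$ becomes $\tau^2-m^2-m+l^2+l$, the $l$-th factor of $F_{2m+2}'$, and with $(n-1)/2=m$ the product $\prod_{l=1}^{m-1}(l^2+t^2)$ becomes $\prod_{l=1}^{m-1}(\tau^2-m^2+l^2)$, the product in $F_{2m+1}'$; the constants $(4\pi)^{-m-1}/m!$ and $(2\pi)^{-m-1}/(2m-1)!!$ pass through untouched, and $n=2$ ($m=0$) and $n=3$ ($m=1$) are the same computation with empty products. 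This gives $\tilde{k}_{n,\tilde{\funcg}}(x,x)=\int_\R h(\tau)\,F_n'(\tau)\,\id\tau=\int_\R\big(\int_\R\cos(\tau t)\,F_n'(\tau)\,\id\tau\big)\funcg(t)\,\id t$ for every admissible even $\funcg$, which is exactly the claimed coincidence of the two cosine transforms on $(-d(x),d(x))$; the normalisation $F_n(0)=0$ only fixes the antiderivative $F_n$ of $F_n'$, which is constant --- hence zero --- on $\big(-(n-1)/2,(n-1)/2\big)$ since $F_n'$ vanishes there. The step that will demand real care is this change-of-variables bookkeeping: one must be certain the factor of $t$ in $\tanh(\pi t)\,t$, respectively the $t^2$ inside $(it)_m(-it)_m$, is absorbed by $t\,\id t=\tau\,\id\tau$ and not accidentally left over as a singular $1/\sqrt{\tau^2-(n-1)^2/4}$; everything else is routine manipulation of Pochhammer symbols and constants.
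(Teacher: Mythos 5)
Your proposal is correct and follows the paper's own route exactly: combine Lemma~\ref{lem}, identity~(\ref{24}) and Lemma~\ref{eventrace}, then perform the change of variables $\tau=\sqrt{t^2+(n-1)^2/4}$ (which the paper dismisses as ``just a result of a coordinate change''). Your explicit bookkeeping --- the Jacobian $t\,\id t=\tau\,\id\tau$ absorbing the factor $t$ (resp.\ the $t^2$ inside $(it)_m(-it)_m$), and the conversion of the Pochhammer products into the stated polynomials --- is precisely the detail the paper omits, and it checks out.
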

\begin{proof}
By Lemma~\ref{lem} we have that the function $\tilde{\funcg}$ is smooth for a given smooth and compactly supported function $\funcg$, moreover its support is contained in the support of $\funcg$. therefore by (\ref{24}) and Lemma~\ref{eventrace} we have
$$
  \int_\R N_{n,x}'(\lambda) h(\lambda) \, \id \lambda =
  \begin{cases}
  \displaystyle{\frac{1}{(4 \pi)^{m+1}m!} \int_{\R} \tilde{h}(t)  \tanh( \pi t) t \left( \frac{1}{2} + i t \right)_{\!\!m} \!\! \left( \frac{1}{2} - i t \right)_{\!\!m} \!\! \id t,} & n=2m+2\\
\displaystyle{\frac{1}{(2 \pi)^{m+1}( 2m-1)!!}  \int_\R   \tilde{h}(t) \left(i t \right)_m\left(-i t \right)_m \, \id t,} & n=2m+1,
\end{cases}
$$
The statement is just a result of a coordinate change.
\end{proof}
\section{Estimates}
The goal of this section is to present explicit estimates of the local counting function on the manifold $M^n$. We will use explicit Fourier-Tauberian theorems due to Safarov which were used previously to estimate the local counting function for domains in $\R^n$. 

\begin{thm}[Safarov, \cite{Safarov}]
Let $F$ be a non-decreasing function on $\mathbb{R}$ with $\mathrm{supp}\;F \subset (0, + \infty)$. Suppose that the cosine transform of $F'(\tau)$ coincides on the interval $(-\delta,\delta)$ with the cosine transform of the function $n \tau_+^{n-1}$ then
\begin{eqnarray*}
F(\tau) &\geq &\tau^n - 2 \pi^{-1} \nu_{\lfloor \frac{n+2}{2} \rfloor}^2 n \delta^{-1} (\tau + \delta^{-1} \nu_{\lfloor \frac{n+2}{2} \rfloor})^{n-1},\\
F(\tau) &\leq& \tau^n +(2 \pi^{-1} \nu_{\lfloor \frac{n+2}{2} \rfloor}^2+\nu_{\lfloor \frac{n+2}{2} \rfloor}) n \delta^{-1} (\tau + \delta^{-1} \nu_{\lfloor \frac{n+2}{2} \rfloor})^{n-1},
\end{eqnarray*}
for all $\tau >0$, where $\nu_{m}$ denotes $2m$-th root of the first eigenvalue of $\Delta^m$ on the interval $(-1/2,1/2)$ subject to Dirichlet boundary conditions.
\end{thm}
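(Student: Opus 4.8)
\emph{Proof strategy.} This is a purely one-dimensional Fourier–Tauberian statement, and I would prove it following the method of \cite{Safarov}: smooth the measure $F'$ against a carefully chosen even mollifier $\varrho_\delta$ whose cosine transform is supported in $(-\delta,\delta)$, use the hypothesis to replace $F'$ by $n\tau_+^{n-1}=(\tau_+^n)'$ once this smoothing is applied, and then recover two-sided pointwise control of $F$ from the monotonicity of $F$. The only nontrivial point is to carry this through with the \emph{sharp} constant, and this is where the extremal quantity $\nu_m$, with $m=\lfloor\frac{n+2}{2}\rfloor=\lfloor\frac n2\rfloor+1$, enters: it is the solution of an uncertainty-type minimisation whose Euler–Lagrange equation is exactly $\Delta^m\psi=\nu_m^{2m}\psi$ on $(-\tfrac12,\tfrac12)$ with the Dirichlet conditions $\psi=\psi'=\dots=\psi^{(m-1)}=0$ at the endpoints.

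\emph{Step 1: the mollifier.} Let $\psi$ be the real, even first Dirichlet eigenfunction of $\Delta^m$ on $(-\tfrac12,\tfrac12)$, extended by zero, set $\chi_\delta(x):=\delta^{-1/2}\psi(x/\delta)$ (supported in $(-\tfrac\delta2,\tfrac\delta2)$) and put $\varrho_\delta:=(2\pi)^{-1}\|\chi_\delta\|_{L^2}^{-2}\,|\widehat{\chi_\delta}|^2$. Then $\varrho_\delta\ge0$, $\int_\R\varrho_\delta=1$, and since $|\widehat{\chi_\delta}|^2$ is the Fourier transform of $\chi_\delta\ast\chi_\delta$, the cosine transform of $\varrho_\delta$ is supported in $(-\delta,\delta)$. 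By Plancherel the even moments are $\int_\R\tau^{2j}\varrho_\delta(\tau)\,\id\tau=\|\chi_\delta^{(j)}\|_{L^2}^2/\|\chi_\delta\|_{L^2}^2$, and the choice of $\psi$ makes the $m$-th one exactly $\int_\R\tau^{2m}\varrho_\delta(\tau)\,\id\tau=\nu_m^{2m}\delta^{-2m}$, the least value attainable over all admissible $\chi_\delta$. The index $m=\lfloor\frac n2\rfloor+1$ is forced: it is the least number of moments of $\varrho_\delta$ one can pin down by a single Dirichlet eigenvalue and still dominate the Taylor remainder of the model $\tau_+^n$ (a $C^{n-1}$, not $C^n$, function); note $2m>n$, which makes all the tail integrals below absolutely convergent.

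\emph{Step 2: removing the difference, and the model term.} Since the cosine transform of $\varrho_\delta$ is supported in $(-\delta,\delta)$, where the cosine transforms of $F'$ and of $n\tau_+^{n-1}$ coincide, convolution kills the difference: $F'\ast\varrho_\delta=(n\tau_+^{n-1})\ast\varrho_\delta$. Hence $F\ast\varrho_\delta-\tau_+^n\ast\varrho_\delta$ has zero derivative; as both terms tend to $0$ when $\tau\to-\infty$ (polynomial growth of $F$ and $\tau_+^n$ against the rapid decay of $\varrho_\delta$), we get $F\ast\varrho_\delta=\tau_+^n\ast\varrho_\delta$. The model error is then an explicit computation: Taylor-expanding $\tau_+^n$ and using the moments from Step 1,
\[
|(\tau_+^n\ast\varrho_\delta)(\tau)-\tau^n|\;\le\;C\,\delta^{-1}\nu_m\,(\tau+\delta^{-1}\nu_m)^{n-1},\qquad\tau>0,
\]
with an explicit $C$; the factors $2\pi^{-1}$ in the statement arise here, through the Plancherel normalisation of the cosine transform.

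\emph{Step 3: the Tauberian step (the main obstacle).} For $s>0$ one has $F(\tau)-F(\tau-s)=\int_\R\mathbf{1}_{(\tau-s,\tau]}(\xi)\,\id F(\xi)\ge0$; using the evenness of $\varrho_\delta$ and pairing $s$ with $-s$ this gives
\[
|(F\ast\varrho_\delta)(\tau)-F(\tau)|\;\le\;\int_{s>0}\bigl(F(\tau+s)-F(\tau-s)\bigr)\varrho_\delta(s)\,\id s\;=\;\int_\R\Bigl(\int_{s>|\xi-\tau|}\varrho_\delta(s)\,\id s\Bigr)\id F(\xi),
\]
and Chebyshev with the $m$-th moment from Step 1 bounds the inner integral by $\min\!\bigl(\tfrac12,(\nu_m/(\delta|\xi-\tau|))^{2m}\bigr)$. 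Thus $|(F\ast\varrho_\delta)(\tau)-F(\tau)|$ is at most the $\id F$-mass of a window of effective half-width $\sim\nu_m/\delta$ about $\tau$, plus a rapidly converging tail; bounding that window mass -- via a second application of the same monotonicity/moment estimate -- by $(\tau+\delta^{-1}\nu_m)^n-(\tau-\delta^{-1}\nu_m)^n$ up to lower order, which is $O(\delta^{-1}\nu_m(\tau+\delta^{-1}\nu_m)^{n-1})$, and combining with Step 2, closes the argument. The two delicate points, which I expect to be the real work, are: (i) the bookkeeping of the numerical constants so that they emerge as precisely $2\pi^{-1}\nu_m^2$ and $2\pi^{-1}\nu_m^2+\nu_m$ -- the asymmetry arises because the upper bound on $F(\tau)$ must also absorb the upward jump of $F$ at $\tau$, worth one extra resolution unit $\nu_m$, whereas the lower bound need not; and (ii) checking that the Tauberian error is genuinely controlled by the \emph{first} Dirichlet eigenvalue of $\Delta^m$ and not by any larger admissible quantity, which is exactly the assertion that the extremal $\psi$ minimises the relevant moment ratio.
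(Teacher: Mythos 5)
First, be aware that the paper offers no proof of this statement at all: it is imported verbatim from Safarov \cite{Safarov} (his Theorem~1.3 together with Lemma~2.6 and Corollary~2.3, which the authors invoke later in the proof of Theorem~\ref{main}). The only trace of its mechanism in the present paper is Notation~\ref{notation}, where exactly your mollifier $\rho=|\widehat{\zeta_m}|^2$ built from the first Dirichlet eigenfunction of $\Delta^m$ on $[-1/2,1/2]$ appears. Your Steps 1 and 2 therefore reconstruct the correct construction and the correct reduction: the compact support of the cosine transform of $\varrho_\delta$ (the autocorrelation of $\chi_\delta$), the identity $F'\ast\varrho_\delta=(n\tau_+^{n-1})\ast\varrho_\delta$, and the moment identity $\int\tau^{2m}\varrho_\delta\,\id\tau=\nu_m^{2m}\delta^{-2m}$ are all right and are exactly Safarov's.

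The gap is in Step 3, at the point you yourself flag as the main obstacle. Bounding the $\id F$-mass of the window $(\tau-\nu_m/\delta,\tau+\nu_m/\delta]$ by $(\tau+\delta^{-1}\nu_m)^n-(\tau-\delta^{-1}\nu_m)^n$ ``via a second application of the same monotonicity/moment estimate'' is circular: a Chebyshev/moment bound controls tails of the known function $\varrho_\delta$, not the local mass of the unknown measure $\id F$, and any bound on increments of $F$ by increments of $\tau^n$ is essentially the conclusion you are trying to prove. The non-circular device --- the actual content of Safarov's Lemma~2.6 --- is to bound the window mass by the \emph{smoothed derivative}: since the first Dirichlet eigenfunction is positive, $\rho(0)=|\widehat{\zeta_m}(0)|^2>0$, so $\varrho_\delta\ge c\,\delta$ on an interval of length comparable to $1/\delta$ about the origin, whence
\[
F(\tau+r)-F(\tau-r)\;=\;\int_{(\tau-r,\tau+r]}\id F(\xi)\;\le\;\frac{(F'\ast\varrho_\delta)(\tau)}{\min_{|s|\le r}\varrho_\delta(s)},
\]
and the right-hand side is explicitly computable because $F'\ast\varrho_\delta=(n\,\tau_+^{n-1})\ast\varrho_\delta$ by your Step 2, which is in turn at most $n(\tau+\delta^{-1}\nu_m)^{n-1}$ plus controlled corrections. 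This is also where the sharp constants really come from: the factor $2/\pi$ is not a Plancherel normalisation but Safarov's lower bound $\int|\nu|\rho(\nu)\,\id\nu\ge\pi/2$ on the first absolute moment (quoted in the proof of Theorem~\ref{main}), and $\nu_m^2$ enters as an upper bound for the second moment $\|\zeta_m'\|^2/\|\zeta_m\|^2$ obtained by interpolating against the $2m$-th. With that lemma inserted your outline closes; without it, Step 3 does not.
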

Lemma~\ref{counting} shows that unfortunately the cosine transform of the derivative of the local counting function  on $M^n$ is not a polynomial of order $n-1$ but it behaves like a polynomial as $\lambda \to \infty$. The idea is to compare it to this asymptotic polynomial and estimate the error which we make by this comparison.
\begin{thm}\label{main}
Let $(M^n,\metricg)$ be a Riemannian manifold and let $x \in M$ be a point such that the metric ball of radius
$d(x)/2$ is isometric to a ball in $\mathbb{H}^n$. Then
the local counting function on $M^n$ satisfies the following estimates
\begin{eqnarray*}
N_{n,x}(\tau) & \leq & \frac{\omega_n}{(2 \pi)^n} \left[ \tau^{n} + \frac{n}{d(x)} \left( \frac{2}{\pi} \nu_{\lfloor \frac{n+2}{2}\rfloor}^2+\nu_{\lfloor \frac{n+2}{2}\rfloor}\right)\!\! \left( \tau+ \frac{\nu_{\lfloor \frac{n+2}{2}\rfloor}}{d(x)} \right)^{n-1} \right]
\end{eqnarray*}
For $n=2m+2$ we have
\begin{eqnarray*}
N_{2m+2,x}(\tau) &\geq & -\| G_{2m+2}\|_{\infty}+ \frac{\omega_{2m+2}}{(2 \pi)^{2m+2}}\left\{ \left( \tau -m -\frac{1}{2}\right)^{2m+2}- \left(m +\frac{1}{2} \right)^{2m+2} \nonumber \right. \\
& &\!\!\!\!\!\!\!\!\!\!\!\!\! \left.-\frac{(2m+2)\nu_{m+2}}{d(x)} \left[ \left( m+\frac{1}{2} + \frac{\nu_{m+2}}{d(x)} \right)^{2m+1} \!\!\!\!\!+ \frac{2 \nu_{m+2}}{\pi} \left( \tau + \frac{\nu_{m+2}}{d(x)} \right)^{2m+1}\right]\right\}\!,
\end{eqnarray*}
while for $n=2m+1$ we have
\begin{eqnarray*}
N_{2m+1,x}(\tau) &\!\!\!\!\! \geq & \!\!\!\!\!
\begin{cases}
\! \frac{\omega_{2m+1}}{(2 \pi)^{2m+1}} \!\left(\tau^{2m+1}- \frac{(4m+2) \nu_{m+1}^2}{d(x) \pi} \left( \tau + \frac{\nu_{m+1}}{d(x)} \right)^{2m} \right)\!- \|G_{2m+1}\|_{\infty}, \ \tau \in [0,m],\\
\! \frac{\omega_{2m+1}}{(2 \pi)^{2m+1}} \!\left((\tau -m)^{2m+1} - \frac{(4m+2) \nu_{m+1}^2}{d(x) \pi} \left( \tau + \frac{\nu_{m+1}}{d(x)} \right)^{2m} \right) \!-\|G_{2m+1}\|_{\infty}, \ \tau >m.
\end{cases}
\end{eqnarray*}
As before $\nu_{m}$ denotes $2m$-th root of the first eigenvalue of $\Delta^m$ on the interval $[-1/2,1/2]$ subject to Dirichlet boundary conditions, $\omega_n$ is the volume of $n$-dimensional Euclidean ball and constants $\|G_n\|_\infty$ are given by (\ref{negative}).
\end{thm}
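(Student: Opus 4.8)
The idea is to feed the explicit local data of Lemma~\ref{counting} into Safarov's Fourier--Tauberian machinery. By Lemma~\ref{counting} the cosine transform of $N_{n,x}'$ agrees on $(-d(x),d(x))$ with that of $F_n'$, where $F_n$ is the explicit model counting function written out there; equivalently, in Safarov's inequality $|N_{n,x}-N_{n,x}\ast\varrho_1|\le N_{n,x}'\ast\varrho_2$ (with $\widehat{\varrho_1},\widehat{\varrho_2}$ supported in $(-d(x),d(x))$) one may replace $N_{n,x}$ on the right by $F_n$. Taking $\varrho_1,\varrho_2$ to be the extremal test functions underlying the quoted theorem --- built from the first eigenfunction of $\Delta^{\lfloor(n+2)/2\rfloor}$ on $(-\tfrac12,\tfrac12)$ rescaled to width $d(x)$, which is where the constants $\nu_{\lfloor(n+2)/2\rfloor}$ and the factor $d(x)^{-1}$ come from --- the whole problem reduces to comparing the \emph{explicit} function $F_n$ with a monomial.

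First I would isolate the non-polynomial part of $F_n'$. Put $\mu=m$ for $n=2m+1$ and $\mu=m+\tfrac12$ for $n=2m+2$. Replacing $\tanh$ by $1$ in even dimension, respectively truncating the expansion of $\sqrt{\tau^2-m^2}$ in odd dimension, writes $F_n'=\mathcal P_n'+G_n'$ with $\mathcal P_n'(\tau)=\tfrac{\omega_n n}{(2\pi)^n}H(\tau-\mu)Q_n(\tau)$ ($Q_n$ a polynomial of degree $n-1$ and leading coefficient~$1$) and a remainder $G_n'$ that is bounded and \emph{integrable}: it has size $1-\tanh(\pi\sqrt{\tau^2-\mu^2})$ times a polynomial, hence exponentially small, in the even case, and is $O(\tau^{-2})$ in the odd case. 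Hence $G_n:=\int_0^{\,\cdot}G_n'$ is a bounded function, and $\|G_n\|_\infty$ is the explicit number evaluated in~(\ref{negative}). An elementary estimate on the factors of $F_n'$ --- each factor $\tau^2-m^2+l^2$, resp.\ $\tau^2-m^2+l^2-m+l$, lies between $0$ and $\tau^2$ for $\tau>\mu$; squaring gives $\sqrt{\tau^2-m^2}\ge\tau-\mu$ and each factor is $\ge(\tau-\mu)^2$ on $[\mu,\infty)$; and $0\le\tanh\le1$ --- together with the polynomial inequality $Q_n(\tau)\ge(\tau-\mu)^{n-1}$ on $[\mu,\infty)$ yields the pinching
$$\frac{\omega_n n}{(2\pi)^n}(\tau-\mu)_+^{\,n-1}\ \le\ \mathcal P_n'(\tau),\qquad F_n'(\tau)\ \le\ \frac{\omega_n n}{(2\pi)^n}\tau_+^{\,n-1},$$
and after integration $F_n(\tau)\le\tfrac{\omega_n}{(2\pi)^n}\tau_+^{\,n}$ and $\mathcal P_n(\tau)\ge\tfrac{\omega_n}{(2\pi)^n}(\tau-\mu)_+^{\,n}\ge\tfrac{\omega_n}{(2\pi)^n}\big[(\tau-\mu)^{n}-\mu^{n}\big]$.

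Now I would apply the Fourier--Tauberian estimate twice. For the upper bound I use its one-sided form (equivalently, Safarov's estimate with reference function $F_n$, licit since $\widehat{N_{n,x}'}=\widehat{F_n'}$ on $(-d(x),d(x))$ and the growth of $F_n'$ is controlled by $\tfrac{\omega_n n}{(2\pi)^n}\tau_+^{\,n-1}$): this gives $N_{n,x}(\tau)\le F_n(\tau)+\tfrac{\omega_n}{(2\pi)^n}\tfrac{n}{d(x)}\big(\tfrac2\pi\nu^2+\nu\big)\big(\tau+\tfrac{\nu}{d(x)}\big)^{n-1}$ with $\nu=\nu_{\lfloor(n+2)/2\rfloor}$, and $F_n(\tau)\le\tfrac{\omega_n}{(2\pi)^n}\tau^n$ is the claimed bound; no $\|G_n\|$ enters because $F_n$ already sits below the monomial. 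For the lower bound I apply Safarov's estimate with reference $F_n$ to get $N_{n,x}(\tau)\ge F_n(\tau)-(\text{Safarov error})$, then use $F_n(\tau)=\mathcal P_n(\tau)+G_n(\tau)\ge\tfrac{\omega_n}{(2\pi)^n}\big[(\tau-\mu)^{n}-\mu^{n}\big]-\|G_n\|_\infty$. The Safarov error here carries the factor $2\pi^{-1}\nu_{\lfloor(n+2)/2\rfloor}^{2}\,n\,d(x)^{-1}$ times $\big(\tau+\nu_{\lfloor(n+2)/2\rfloor}/d(x)\big)^{n-1}$, together with the lower-order boundary term produced by the shift by $\mu$ (the extra $\tfrac{(2m+2)\nu_{m+2}}{d(x)}(m+\tfrac12+\nu_{m+2}/d(x))^{2m+1}$ in even dimension). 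In odd dimension $F_{2m+1}$ vanishes on $[0,m]$, so for $\tau\le m$ the comparison is with $\tau^{2m+1}$ rather than $(\tau-m)^{2m+1}$, which is the stated case distinction. Inserting $\omega_n=\pi^{n/2}/\Gamma(n/2+1)$ and collecting constants finishes the proof.

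The main obstacle is exactly the mismatch in the first step: the quoted Safarov theorem compares a counting function with the \emph{pure} monomial $\tau^n$, whereas $\widehat{N_{n,x}'}$ agrees only with $\widehat{F_n'}$ and $F_n'$ is neither a monomial nor a shifted monomial. The substantive work is therefore (i) extracting the asymptotic polynomial part of $F_n'$ and proving that the remainder $G_n$ is a bounded function with an explicitly computable supremum, and (ii) arranging that $\|G_n\|$ is needed on the lower side only, by routing the upper estimate through $F_n'\le\tfrac{\omega_n n}{(2\pi)^n}\tau_+^{\,n-1}$ and the lower estimate through $\mathcal P_n'\ge\tfrac{\omega_n n}{(2\pi)^n}(\tau-\mu)_+^{\,n-1}$. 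The remaining steps --- the binomial inequalities and the closed-form evaluation of the integral defining $\|G_n\|_\infty$ --- are routine.
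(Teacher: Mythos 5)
Your proposal is correct and follows essentially the same route as the paper: Safarov's Fourier--Tauberian theorem applied to the model function $F_n$ of Lemma~\ref{counting}, the decomposition of $F_n$ into the asymptotic polynomial plus the non-positive, bounded error $G_n$ (whose sign lets you drop it on the upper side and pay only $\|G_n\|_\infty$ on the lower side), and the shifted-monomial comparison of Lemma~\ref{lemma} for the lower bound. The pinching inequalities you state are exactly the paper's (\ref{bounds}).
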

The proof of this theorem is constructive and thus also is of interest itself as it gives an algorithm for computing estimates of the local counting function, estimates of the derivatives (in a space variable) of the local counting function as well. Some examples will be given in Example~\ref{ex}, we will also derive an estimate of the counting function on compact hyperbolic manifolds with an explicit remainder estimate (Corollary~\ref{counting1}).

Let us define a function $G_n$ as the error when we compare $F_n$ to a polynomial:
\[
\begin{cases}
G'_{2m+2}(\tau):= \frac{2 H \! \left( \tau -\! m-\!\frac{1}{2} \right)}{(4 \pi)^{m+1} m!} \left[ \tanh \left(\!\pi \sqrt{\tau^2-\left(m+\frac{1}{2}\right)^{\! 2}}\right) -1 \right] \! \tau\displaystyle{ \prod_{l=0}^{m-1}} \left(\tau^2 -m^2+l^2-m+l\right),
\\
G_{2m+2}(0)=0.
\end{cases}
\]
By the definition $G'_{2m+2}$ is clearly a integrable and negative function. Note that the function $\sqrt{t^2-m^2}$ has the following expansion
\[
\sqrt{t^2-m^2}= t- \frac{1}{2} \sum_{j=1}^{\infty} \frac{m^{2j} \left( \frac{1}{2} \right)_{j-1}}{t^{2j-1} j!},
\] where $(x)_n=\Gamma(x+n)/\Gamma(x)$ and the series converges for $|t|>m$. Let us define $G'_{2m+1}(\tau)$ as the part of the Laurent series of $F'_{2m+1}(\tau)$ with negative powers  of $\tau$ multiplied by the characteristic function of the set $[m,\infty)$.  Let us set $G_{2m+1}(0)=0$. The function $G_n$ satisfies the following bounds
\begin{equation} \label{negative}
\| G_n\|_{\infty} \leq \begin{cases}
\frac{1}{48 \pi}, &n=2, \\
\frac{1}{12 \pi^2}, & n=3,\\
\min \left\{\frac{2\left( \left(m-\frac{1}{2}\right)^2 +\frac{1}{4 \pi^2} \right)^m (2m+1)!}{\pi (4 \pi)^{m+2}m!}, \frac{2(2m+1)! e^{\pi (2m-1)}}{(16  \pi^3)^{m+1} m!} \right\}, & n=2m+2 \\
\frac{m^{2m+1}(1-m^4)+(2m-1)!m^2(1+m^{2m-2})}{(2 \pi)^{m+1}(2m-1)!!(1-m^4)}, &n=2m+1 \ \textrm{and } m \ \textrm{odd},\\
\frac{m^{2m+1}(1-m^4)+(2m-1)!m^4(1+m^{2m-4})}{(2 \pi)^{m+1}(2m-1)!!}, & n=2m+1 \ \textrm{and } m \ \textrm{even}.
\end{cases}.
\end{equation}
\begin{notation} \label{notation}
For the Dirichlet extension of $\Delta^m$ on the interval $[-1/2,1/2]$ let as before $\nu_m$ be the $2m$-th root of the first eigenvalue and 
$\zeta_m$ be the corresponding normalised eigenfunction extended by 0 to the real line. We define $\rho$ as the square of the Fourier transform of $\zeta_m$. Following \cite{Safarov} let
\begin{equation} \label{testrho} 
 \quad \rho_\delta( \tau)= \delta \rho (\delta \tau), \quad \rho_{ \delta, 0} = \delta \rho_{1,0} (\delta \tau), \quad \rho_{1,0}(\tau)= \int_{\tau}^{\infty} t \rho(t) \, \id t. 
\end{equation}
\end{notation}
Plancharel theorem implies that $\| \rho_{\delta} \|_1 = 1$.
The function $G'_{2m+1}$ is non-positive, therefore
\begin{equation} \label{infty}
 \rho_{\delta} \ast G_n( \tau) \leq 0, \quad \rho_{\delta,0} \ast G'_n\leq 0
\end{equation}
for $\delta>0$. By Young's inequality we have $ |\rho_{\delta} \ast G_n( \tau)| \leq \| G_n\|_{\infty}$. 
Note, that 
\begin{equation} \label{bounds}
\frac{n\omega_n}{(2 \pi)^n}H\left(\tau-\frac{n-1}{2}\right) \left(\tau- \frac{n-1}{2} \right)^{n-1} \leq F'_n(\tau)-G'_n(\tau)  \leq \frac{n\omega_n}{(2 \pi)^n}H(\tau) \tau^{n-1},
\end{equation}
where $\omega_n$ is a volume of $n$-dimensional Euclidean unit ball. 
In order to give bounds on the local counting function we will need the following Fourier Tauberian theorem.
\begin{lem}
\label{lemma}
Let $\rho$ be an even function such that $\mathrm{supp}(\hat{\rho})  \subset [-1,1]$ and suppose $|\rho(t)| \leq \mathrm{const.} (1+ t^2)^{-m-1}$ for $m> n/2$. Define $\rho_{\delta}$ as in (\ref{testrho}).  Suppose that $\mathrm{supp} E_n \in (0, + \infty)$ and the cosine transform of $E'_n(t)$ coincides on the interval $(-\delta, \delta)$ with the cosine transform of the function $ n H( t-\frac{n-1}{2}) (t-\frac{n-1}{2})^{n-1}$, then 
\begin{eqnarray}
\rho_{\delta} \ast \widetilde{E}_{2m+2}(\tau) &\!\!\!\!\!\!\!\!\!\!   \geq &\!\!\!\!\!\!\!\!\!\!  \left( \! \tau\! -\! m \!- \!\frac{1}{2} \! \right)^{2m+2}\!\!\!- \!\left( \! m \! +\! \frac{1}{2} \! \right)^{2m+2} \!\!\!- (2m \! +\! 2) \!\!\int \left| \frac{\nu}{\delta} \right| \! \left(\! m \!+\! \frac{1}{2} \!+\! \frac{\nu}{\delta} \! \right)^{2m+1} \!\!\!\!\rho (\nu) \id \nu , \nonumber \\
& & {\textrm{for }\tau  >0,} \nonumber  \\
\rho_{\delta} \ast \widetilde{E}_{2m+1} (\tau) &\geq &\begin{cases}
\tau^{2m+1} & \textrm{for } \tau \in [0,m],\\
(\tau-m)^{2m+1} & \textrm{for } \tau >m,
\end{cases} \nonumber
\end{eqnarray}
where $\tilde{E}(t)= E(t)-E(-t)$.
\end{lem}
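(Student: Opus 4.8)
The plan is to reduce the statement to a single explicit ``model'' function. Put $P_n(\tau):=(\tau-\tfrac{n-1}{2})_+^n$, so that $P_n'(\tau)=nH(\tau-\tfrac{n-1}{2})(\tau-\tfrac{n-1}{2})^{n-1}$ is exactly the reference function in the hypothesis; this is the shifted counterpart of Safarov's $n\tau_+^{n-1}$, the shift $\tfrac{n-1}{2}$ being inherited from $L=\Delta-(\tfrac{n-1}{2})^2$. First I would rewrite the hypothesis, which is phrased via cosine transforms, in terms of ordinary Fourier transforms using the even/odd device $\widetilde F(\tau)=F(\tau)-F(-\tau)$: the cosine transforms of $E_n'$ and $P_n'$ agree on $(-\delta,\delta)$ precisely when the Fourier transforms of $(\widetilde E_n)'$ and $(\widetilde P_n)'$ agree there, so that $\mathrm{supp}\bigl(\widehat{(\widetilde E_n)'}-\widehat{(\widetilde P_n)'}\bigr)\subset\{\,|\xi|\ge\delta\,\}$. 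Since $\hat\rho$ is supported in $[-1,1]$, $\hat\rho_\delta$ is supported in $[-\delta,\delta]$, and so $\hat\rho_\delta\cdot\bigl(\widehat{(\widetilde E_n)'}-\widehat{(\widetilde P_n)'}\bigr)$ is a distribution supported in the two‑point set $\{\pm\delta\}$. Here the decay assumption $|\rho(t)|\le\mathrm{const.}\,(1+t^2)^{-m-1}$ with $m>n/2$ does its work: it forces $\hat\rho$, hence $\hat\rho_\delta$, to vanish at $\pm1$ (resp.\ $\pm\delta$) to an order strictly larger than the distributional order of $\widehat{(\widetilde E_n)'}-\widehat{(\widetilde P_n)'}$, which, by the a priori (Weyl‑type) polynomial bound on $E_n$, is the transform of a function of polynomial growth of degree $\le n-1$ and hence of order $\le n-1$; consequently that product vanishes identically and $\rho_\delta\ast(\widetilde E_n)'=\rho_\delta\ast(\widetilde P_n)'$. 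Integrating and noting that both $\rho_\delta\ast\widetilde E_n$ and $\rho_\delta\ast\widetilde P_n$ are odd (an odd function convolved with the even $\rho_\delta$) kills the integration constant, leaving $\rho_\delta\ast\widetilde E_n=\rho_\delta\ast\widetilde P_n$.

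It then remains to bound the completely explicit quantity $\rho_\delta\ast\widetilde P_n(\tau)$ from below for $\tau>0$. Rescaling by $s=\nu/\delta$ turns the convolution into $\rho_\delta\ast\widetilde P_n(\tau)=\int\widetilde P_n(\tau-\tfrac\nu\delta)\,\rho(\nu)\,\id\nu$, with $\widetilde P_n(\sigma)=\sign(\sigma)\,(|\sigma|-\tfrac{n-1}{2})_+^n$. I would compare $\widetilde P_n(\tau-\tfrac\nu\delta)$ with the pure monomial $(\tau-\tfrac{n-1}{2}-\tfrac\nu\delta)^n$: the two coincide as soon as $\tau-\tfrac\nu\delta\ge\tfrac{n-1}{2}$, and on the remaining set the defect is nonnegative and controlled --- by the mean value theorem applied to $\widetilde P_n$ and to $\sigma\mapsto(\sigma-\tfrac{n-1}{2})^n$ --- by a combination of $n\,\bigl|\tfrac\nu\delta\bigr|\bigl(\tfrac{n-1}{2}+\bigl|\tfrac\nu\delta\bigr|\bigr)^{n-1}$ and the additive constant $(\tfrac{n-1}{2})^n$ coming from the ``boundary layer'' $|\sigma|\le\tfrac{n-1}{2}$. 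Expanding $(\tau-\tfrac{n-1}{2}-\tfrac\nu\delta)^n$ binomially and using that $\rho$ is even, so all its odd moments vanish, together with $\int\rho=1$, gives $\int(\tau-\tfrac{n-1}{2}-\tfrac\nu\delta)^n\rho(\nu)\,\id\nu\ge(\tau-\tfrac{n-1}{2})^n$. Combining this with the defect bound, and symmetrising the defect integral in $\nu$, produces the stated estimate when $n=2m+2$. For odd $n=2m+1$ the shift $\tfrac{n-1}{2}=m$ is an integer and $\widetilde P_n$ collapses on $\sigma>0$ to $(\sigma-m)_+^{2m+1}$, whence the bound $(\tau-m)^{2m+1}$ for $\tau>m$ follows exactly as above; on $[0,m]$, where $\widetilde P_n$ vanishes, one sharpens to $\tau^{2m+1}$ by feeding in the monotonicity of $\widetilde E_n$ (which holds in the intended application, $E_n$ being a primitive of the nonnegative function $F_n'-G_n'$).

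The step I expect to be the main obstacle is the endpoint bookkeeping in the first paragraph: pinning down the order of vanishing of $\hat\rho$ at $\pm1$ --- which is exactly where the Dirichlet boundary conditions of the polyharmonic ground state underlying $\rho$ intervene --- and checking that it exceeds the distributional order of $\widehat{(\widetilde E_n)'}-\widehat{(\widetilde P_n)'}$, so that the two‑point distribution at $\pm\delta$ really is zero rather than contributing a spurious trigonometric‑polynomial term. Secondary to that is the purely computational task of organising the region splitting and the mean value estimates in the second paragraph so that the numerical constants come out precisely as stated. Once the reduction to $P_n$ is set up, all that remains is a bounded number of elementary manipulations with truncated powers, binomial coefficients and the moments of $\rho$.
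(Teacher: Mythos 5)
Your first paragraph and the even case follow essentially the paper's route: after convolving with $\rho_\delta$ one replaces $\widetilde E_n$ by the model $\widetilde P_n$, $P_n(\tau)=(\tau-\tfrac{n-1}{2})_+^n$, and the lower bound comes from comparing $\widetilde P_n(\tau-\nu/\delta)$ with the full power $(\tau-\tfrac{n-1}{2}-\nu/\delta)^n$ and using that $\rho$ is even, nonnegative, with $\int\rho=1$ (the paper packages your ``binomial expansion plus vanishing odd moments'' as the symmetrised polynomial $P_n(\tau,\nu)=\tfrac12[(\tau+\nu)^n+(\tau-\nu)^n]$, which is the same device). The endpoint issue at $\pm\delta$ that you flag as the main obstacle is real but benign: besides your order-of-vanishing argument one can simply run everything with $\delta'<\delta$ and let $\delta'\to\delta$; the paper passes over it silently. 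One sign remark: for even $n$ and $\sigma<\tfrac{n-1}{2}$ one has $\widetilde P_n(\sigma)\le 0\le(\sigma-\tfrac{n-1}{2})^n$, so the defect works \emph{against} the lower bound --- it is exactly what generates the subtracted terms $(m+\tfrac12)^{2m+2}$ and $(2m+2)\int|\nu/\delta|(\,\cdot\,)^{2m+1}\rho$ --- so ``the defect is nonnegative'' has the orientation backwards, though your description of how it is controlled is otherwise consistent.

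The genuine gap is the odd case on $[0,m]$. You propose to get $\rho_\delta\ast\widetilde E_{2m+1}(\tau)\ge\tau^{2m+1}$ there from monotonicity of $E_n$. That cannot work: monotonicity is not a hypothesis of the lemma, and even granting it, a nondecreasing odd function $\rho_\delta\ast\widetilde E_{2m+1}$ only gives the lower bound $0$ on $[0,m]$ (comparison with the value at $\tau=m$ goes in the wrong direction). Nor can any pointwise comparison of $\widetilde P_{2m+1}$ with a power deliver it, since $\widetilde P_{2m+1}\equiv 0$ on $[0,m]$ while $\tau^{2m+1}>0$ there. The mechanism the paper uses is the sign flip of the reflected branch for odd powers: for $\sigma\le -m$ one has $\widetilde P_{2m+1}(\sigma)=(\sigma+m)^{2m+1}$ with a \emph{plus} sign (in the even case that branch is subtracted), so splitting the convolution yields the extra additive term $\int_\R(m-\nu/\delta)^{2m+1}\rho(\nu)\,\id\nu=\sum_{k\ \mathrm{even}}\binom{2m+1}{k}m^{2m+1-k}\delta^{-k}\int\nu^k\rho\,\id\nu\ge m^{2m+1}$ on top of $\int_\R(\tau-m-\nu/\delta)^{2m+1}\rho\,\id\nu$, and then $m^{2m+1}-(m-\tau)^{2m+1}\ge\tau^{2m+1}$ by superadditivity of $t\mapsto t^{2m+1}$ (the intermediate inequality by which the paper extracts this extra term itself deserves scrutiny, but that is the intended source of the $[0,m]$ bound). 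Without that contribution your plan produces only a negative lower bound on $[0,m]$.
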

\begin{proof}
The assumption implies that the Fourier transform of $\tilde{E_n}$ coincides on the interval $(-\delta, \delta)$ with the Fourier transform of $ H( t-\frac{n-1}{2}) ( t-\frac{n-1}{2})^{n}-H( -t-\frac{n-1}{2})( -t-\frac{n-1}{2})^{n}$, while the compactness of the support of the Fourier transform of $\rho$ implies that for $n=2m+2$ we have
\begin{eqnarray}
\rho_{\delta} \ast \widetilde{E}_{2m+2} (\tau) &= & \!\!\!\!\!\!\!\!\!\! \int \limits_{-\infty}^{\delta(\tau -m -\frac{1}{2})} \!\!\!\!\!\!\!\! \left(\tau - \frac{\nu}{\delta}-m-\frac{1}{2} \right)^{2m+2} \!\!\!\!\!\!\rho(\nu) \id \nu - \!\!\!\!\!\!\!\!\int \limits^{\infty}_{\delta(\tau +m +\frac{1}{2})} \!\!\!\!\!\!\!\!\left(\tau - \frac{\nu}{\delta}+m+\frac{1}{2} \right)^{2m+2} \!\!\!\!\!\!\!\rho(\nu) \id \nu \nonumber \\
&\!\!\!\!\!\!\!\!\!\! \!\!\!\!\!\!\!\!\!\! \!\!\!\!\!\!\!\!\!\! \!\!\!\!\!\!\!\!\!\! \!\!\!\!\!\!\!\!\!\! \!\!\!\!\!\!\!\!\!\!  \!\!\!\!\!\!\!\!\!\! \!\!\!\!\!\!\!\!\!\!\!\!\!\!= &\!\!\!\!\!\!\!\!\!\! \!\!\!\!\!\!\!\!\!\! \!\!\!\!\!\!\!\!\!\! \!\!\!\!\!\!\!\!\!\! \!\!\!\!\!\!\!\!\!\! \int\limits_{\R} \!\!\left(\! \tau \!- \! \frac{\nu}{\delta}\! - \! m \!-\! \frac{1}{2} \!\right)^{2m+2} \!\!\!\!\!\!\rho(\nu) \id \nu \!-\!\!\!\!\!\!\!\!\!\int\limits_{\delta(\!\tau \! - \! m \! - \! \frac{1}{2}\!)}^{\infty} \!\!\!\!\!\!\!\!\left(\! \tau \! -  \!\frac{\nu}{\delta} \!- \! m \!- \!\frac{1}{2} \! \right)^{2m+2} \!\!\!\!\!\!\rho(\nu) \id \nu -\!\!\!\!\!\!\!\!\int\limits^{\infty}_{\delta(\tau +m +\frac{1}{2})} \!\!\!\!\!\!\!\! \left(\!\tau \! -\! \frac{\nu}{\delta} \! + \! m \! + \! \frac{1}{2} \! \right)^{2m+2} \!\!\!\!\!\!\!\rho(\nu) \id \nu \nonumber \\
& \geq & \int\limits_{\R} \left(\tau - \frac{\nu}{\delta}-m-\frac{1}{2} \right)^{2m+2} \!\!\!\!\!\!\rho(\nu) \id \nu -\int\limits_{\R} \left( \frac{\nu}{\delta}-m-\frac{1}{2} \right)^{2m+2} \!\!\!\!\!\!\rho(\nu) \id \nu, \nonumber 
\end{eqnarray}
where the inequality is satisfied just for positive $\tau$. Let us define $P_n(\tau, \nu):= 1/2 [(\tau + \nu)^n+(\tau - \nu)^n]$. Then $P_n$ is a polynomial in $\tau$ and $\nu$ which contains only even powers of $\tau$. The fact that $\rho$ is an even function results in the inequality
\[ 
\rho_{\delta} \ast \widetilde{E} (\tau) \geq \int \limits_{\R} \left[ P_{2m+2}(\tau -m -1/2, \nu / \delta)-P_{2m+2}(m +1/2, \nu / \delta)\right] \rho(\nu) \, \id \nu
\]
The basic estimates 
\[ 
\tau^{2m+2} \leq P_{2m+2}(\tau,\nu) \leq \tau^{2m+2} +(2m+2)|\nu|(|\tau|+|\nu|)^{2m+1}
\]
finish the proof in the case of $n=2m+2$. For $n=2m+1$ we have
\begin{eqnarray}
\rho_{\delta} \ast \widetilde{E}_{2m+1} (\tau) &= &\int \limits_{-\infty}^{\delta(\tau -m )}  \left(\tau - \frac{\nu}{\delta}-m \right)^{2m+1} \!\!\!\!\!\!\rho(\nu) \id \nu + \int \limits^{\infty}_{\delta(\tau +m )} \left(\tau - \frac{\nu}{\delta}+m \right)^{2m+1} \!\!\!\!\!\!\!\rho(\nu) \id \nu \nonumber \\
&\!\!\!\!\!\!\!\!\!\! \!\!\!\!\!\!\!\!\!\! \!\!\!\!\!\!\!\!\!\! \!\!\!\!\!\!\!\!\!\! \!\!\!\!\!\!\!\!\!\!\!\!\!\!\!\!\!\!\!\!\!\!\!\!= &\!\!\!\!\!\!\!\!\!\! \!\!\!\!\!\!\!\!\!\! \!\!\!\!\!\!\!\!\!\! \!\!\!\!\!\!\!\!\!\! \int\limits_{\R} \!\!\left(\! \tau \!- \! \frac{\nu}{\delta}\! - \! m \right)^{2m+1} \!\!\!\!\!\!\rho(\nu) \id \nu \!+\!\!\!\!\!\int\limits_{\delta(\!\tau \! - \! m )}^{\infty} \!\!\left(\!\frac{\nu}{\delta} \! + \! m  \! -\tau \!  \right)^{2m+1} \!\!\!\!\!\!\rho(\nu) \id \nu +\!\!\!\!\!\!\!\!\int\limits^{\infty}_{\delta(\tau +m )} \!\!\!\! \left(\tau - \frac{\nu}{\delta}+m \right)^{2m+1} \!\!\!\!\!\!\!\rho(\nu) \id \nu \nonumber \\
& \geq & \int\limits_{\R} \left(\tau - \frac{\nu}{\delta}-m \right)^{2m+1} \!\!\!\!\!\!\rho(\nu) \id \nu +\int\limits_{\R} \left( m - \frac{\nu}{\delta}\right)^{2m+1} \!\!\!\!\!\!\rho(\nu) \id \nu, \nonumber 
\end{eqnarray}
for $\tau>0$. This implies the following estimates
\begin{equation}
\rho_{\delta} \ast \widetilde{E}_{2m+1} (\tau) \geq 
\begin{cases}
\tau^{2m+1} & \textrm{for } \tau \in [0,m],\\
(\tau-m)^{2m+1} & \textrm{for } \tau >m.
\end{cases}
\end{equation}
\end{proof}
At this point we have all tools needed to give the proof of Theorem~\ref{counting}:
\begin{proof}
One may prove that the test function defined in Notation~\ref{notation} satisfies the assumptions of the Theorem~1.3, Lemma~2.6 in \cite{Safarov} and Lemma~\ref{lemma}. Moreover Theorem~1.3 in \cite{Safarov}, our Theorem~\ref{counting} and compactness of the support of the function $\rho$ implies
\begin{equation}
\rho_{d(x)} \ast F_n(\tau) -\frac{d(x)^{-1}}{\int |\nu| \rho(\nu) \id \nu} \rho_{d(x),0} \ast F'_n(\tau) \leq N_{n,x} \leq \rho_{d(x)} \ast F_n(\tau) +\frac{d(x)^{-1}}{\int |\nu| \rho(\nu) \id \nu} \rho_{d(x),0} \ast F'_n(\tau)
\end{equation}
Let us define the asymptotic polynomial of $F_n$ as
\begin{equation}
p_a(n,\tau)= F_n(\tau) - G_n(\tau).
\end{equation}
Notice that the asymptotic polynomial is just the Taylor part of a Laurent series of $F_n$ multiplied by the characteristic function of the interval $\left[ \frac{n-1}{2}, \infty \right)$.  Safarov in \cite{Safarov} has shown that $\int |\nu| \rho(\nu) \id \nu \geq \pi/2 $. Inequalities (\ref{negative}) admit the estimate
\begin{equation} \label{upperbound}
N_{n,x} \leq \rho_{d(x)} \ast p_a(n,\tau) +\frac{2}{\pi d(x)} \rho_{d(x),0} \ast \partial_\tau p_a(n,\tau)
\end{equation}
A monotonicity of an integration and (\ref{bounds}) raise a conclusion
\begin{eqnarray}
N_{n,x}(\tau) & \leq & \frac{\omega_n}{(2 \pi)^n} \left[ \tau^{n} + \frac{n}{d(x)} \left( \frac{2}{\pi} \nu_{\lfloor \frac{n+2}{2}\rfloor}^2+\nu_{\lfloor \frac{n+2}{2}\rfloor}\right)\!\! \left( \tau+ \frac{\nu_{\lfloor \frac{n+2}{2}\rfloor}}{d(x)} \right)^{n-1} \right] \nonumber
\end{eqnarray}
Using one more time (\ref{negative}) one may show that
\begin{equation} \label{lowerbound}
\rho_{d(x)} \ast p_a(n,\tau) -\frac{2}{\pi d(x)} \rho_{d(x),0} \ast \partial_\tau p_a(n,\tau)+\rho_{d(x)} \ast G_n(\tau)  \leq N_{n,x}(\tau).
\end{equation}
When we take into account inequality (\ref{infty})  we arrive at
\begin{equation}
N_{n,x}(\tau) \geq \rho_{d(x)} \ast p_a(n,\tau) -\frac{2}{\pi d(x)} \rho_{d(x),0} \ast \partial_\tau p_a(n,\tau) -\|G_n\|_{\infty}.
\end{equation}
Inequality~(\ref{bounds}) and Lemma~\ref{lemma} complete the proof.
\end{proof}
In the special cases $n=2,3,4$ slightly stronger estimates can be obtained.
\begin{exm}\label{ex}
Recall that $p_a(2,\tau)= H\left(\tau - \frac{1}{2}\right)\frac{\tau}{2 \pi}  $, $p_a(3,\tau)= H(\tau-1)\left(\frac{\tau^3}{6 \pi^2} -\frac{\tau}{4 \pi^2}\right)$, $p_a(4,\tau):=  H(\tau-\frac{3}{2}) \left( \frac{\tau^4}{32 \pi^2}- \frac{\tau^2}{8 \pi^2} \right)$. Inequality (\ref{upperbound}) with Corollary~2.3 from \cite{Safarov} imply
\begin{eqnarray}
N_{2,x}(\tau)& \leq & \frac{1}{4 \pi} \left(\tau^2 + \frac{4\nu^2_2 +2 \nu_2 \pi}{\pi d(x)} \left(\tau + \frac{\nu_2}{d(x)} \right) \right),  \label{2gorne}\\
N_{3,x}(\tau) & \leq & \frac{1}{6 \pi^2} \left( \tau^3 + \left( \frac{6 \nu_2^2+3 \pi \nu_3}{\pi d(x)} \right) \left( \tau+ \frac{\nu_2}{d(x)} \right)^2 \right)- \frac{1}{4 \pi^2}  \left( \tau - \frac{2 \nu_1^2}{\pi d(x)}\right),\\
 N_{4,x}(\tau) & \leq & \!\!\frac{1}{32 \pi^2} \!\left( \!\tau^4 \!+ \!\left( \frac{8 \nu_3^2\!+\!4 \pi \nu_3}{\pi d(x)} \right)\! \left( \!\tau \! + \! \frac{\nu_3}{d(x)} \!\right)^3 \right)\! - \! \frac{1}{8 \pi^2} \! \left( \! \tau^2 \! - \! \frac{4 \nu_2^2}{\pi d(x)} \! \left( \!\tau \! + \! \frac{\nu_2}{d(x)} \! \right) \! \right)
\end{eqnarray}
Moreover, one may compute the exact value for the supremum norm of $G_n$, 
\[ \| G_2\|_{\infty} =  \frac{1}{48 \pi}, \quad \| G_3\|_{\infty} =  \frac{1}{12 \pi^2}, \quad \| G_4\|_{\infty} =  \frac{17}{7680 \pi^2}\]
\end{exm}
These estimates combined with inequality (\ref{lowerbound})  and Corollary~2.3 from \cite{Safarov} give
\begin{eqnarray}
N_{2,x}(\tau)& \geq & \frac{1}{4 \pi} \left(\tau^2 - \frac{4\nu^2_2 }{\pi d(x)} \left(\tau + \frac{\nu_2}{d(x)} \right) -\frac{1}{12} \right), \label{2dolne} \\
N_{3,x}(\tau) & \geq & \frac{1}{6 \pi^2} \left( \tau^3 - \frac{6 \nu_2^2}{\pi d(x)}  \left( \tau+ \frac{\nu_2}{d(x)} \right)^2 \right)- \frac{1}{4 \pi^2}  \left( \tau + \frac{2 \nu_1^2+\pi \nu_1}{\pi d(x)}\right)- \frac{1}{12 \pi^2},\\
N_{4,x}(\tau) & \geq & \frac{1}{32 \pi^2} \left( \tau^4 - \frac{8 \nu_3^2}{\pi d(x)} \left( \tau+ \frac{\nu_3}{d(x)} \right)^3 \right)- \nonumber \\
& & \frac{1}{8 \pi^2}  \left( \tau^2 + \frac{4 \nu_2^2+2 \pi \nu_2}{\pi d(x)}  \left( \tau+ \frac{\nu_2}{d(x)} \right) \right)- \frac{17}{7680 \pi^2},
\end{eqnarray}
where the numerical values of $\nu_n$ are
\begin{eqnarray*}
\nu_2 =  4.73004074\ldots, \\
\nu_3 =  6.28318530\ldots, \\
\nu_4 = 7.81870734 \ldots .
\end{eqnarray*}
Notice that simple integration of the presented estimates gives bounds on the counting function of the Laplacian for hyperbolic manifolds. Denote by $l_n$ the length of the shortest closed geodesic on $M^n$. The following theorem gives a proof of the Weyl's law, moreover it gives the estimate on the remainder term. 
\begin{cor}\label{counting1}
The counting function  of Laplacian on a compact hyperbolic manifold $M^n$ satisfies the following estimates
\begin{eqnarray}
N_{n}(\tau) & \leq & \frac{\omega_n|M^n|}{(2 \pi)^n} \left[ \tau^{n} + \frac{n}{l_n} \left( \frac{2}{\pi} \nu_{\lfloor \frac{n+2}{2}\rfloor}^2+\nu_{\lfloor \frac{n+2}{2}\rfloor}\right)\!\! \left( \tau+ \frac{\nu_{\lfloor \frac{n+2}{2}\rfloor}}{l_n} \right)^{n-1} \right]
\end{eqnarray}
For $n=2m+2$ we have
\begin{eqnarray}
\frac{N_{2m+2}(\tau)}{|M^{2m+2}|} &\geq & -\| G_{2m+2}\|_{\infty}+ \frac{\omega_{2m+2}}{(2 \pi)^{2m+2}}\left\{ \left( \tau -m -\frac{1}{2}\right)^{2m+2}- \left( m +\frac{1}{2} \right)^{2m+2} \nonumber \right. \\
& &\!\!\!\!\!\!\!\!\!\!\!\!\!\!\!\!\!\!\!\!\!\!\! \!\!\!\!\!\!\!\!\!\!  \left.-\frac{(2m+2)\nu_{m+2}}{l_{2m+2}} \left[ \left( m+\frac{1}{2} + \frac{\nu_{m+2}}{l_{2m+2}} \right)^{2m+1} + \frac{2 \nu_{m+2}}{\pi} \left( \tau + \frac{\nu_{m+2}}{l_{2m+2}} \right)^{2m+1}\right]\right\},
\end{eqnarray}
while for $n=2m+1$ we have
\begin{eqnarray}
\frac{N_{2m+1}(\tau)}{|M^{2m+1}|} &\!\!\!\!\!\geq & \!\!\!\!\!
\begin{cases}
\!\frac{\omega_{2m+1}}{(2 \pi)^{2m+1}} \! \left(\! \tau^{2m+1} \! - \! \frac{(4m+2) \nu_{m+1}^2}{l_{2m+1} \pi} \left( \tau + \frac{\nu_{m+1}}{l_{2m+1}} \right)^{2m} \right)- \|G_{2m+1}\|_{\infty}, \tau \in [0,m],\\
\!\frac{\omega_{2m+1}}{(2 \pi)^{2m+1}} \!\left( \! (\tau \! - \! m)^{2m+1} \! - \! \frac{(4m+2) \nu_{m+1}^2}{l_{2m+1} \pi} \! \left(\! \tau \! + \! \frac{\nu_{m+1}}{l_{2m+1}} \! \right)^{2m} \right) \! - \! \|G_{2m+1}\|_{\infty}, \tau >m.
\end{cases}
\end{eqnarray}
\end{cor}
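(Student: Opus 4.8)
The plan is to obtain Corollary~\ref{counting1} directly from the pointwise estimates of Theorem~\ref{main} by integrating over $M^n$, the only new input being a uniform lower bound for the admissible radius at every point.

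First I would record the elementary identity $N_n(\tau) = \int_{M^n} N_{n,x}(\tau)\, \id\mathrm{vol}_\metricg(x)$, which reduces the statement to integrating bounds that are already available. The point that makes the right-hand sides independent of $x$ is that, on a compact hyperbolic manifold, the metric ball $B_{l_n/2}(x)$ is isometric to a ball in $\Hy^n$ for \emph{every} $x$. Indeed, $M^n$ has constant curvature $-1$ and hence no conjugate points, so by Klingenberg's lemma $\mathrm{inj}(M^n) = \tfrac{1}{2}l_n$; in particular $\mathrm{inj}(x) \ge \tfrac{1}{2}l_n$, and therefore $d(x) = 2\,\mathrm{inj}(x) \ge l_n$ for all $x \in M^n$.

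Next I would apply Theorem~\ref{main} at each point $x$ with the radius taken to be $l_n$. This is legitimate because the estimates there remain valid for any value $\le d(x)$: the quantity $d(x)$ enters the proof only through the requirement that the test function be supported in $(-d(x),d(x))$, and shrinking that support merely strengthens the finite-propagation-speed argument. Equivalently, one may keep $d(x)$ and observe that every correction term appearing in Theorem~\ref{main} is monotone in $d(x)$ (the positive corrections in the upper bound are decreasing, the negative corrections in the lower bounds are increasing), so replacing $d(x)$ by the smaller value $l_n \le d(x)$ weakens the inequalities in the correct direction. Either way one obtains, for every $x$, the upper and lower bounds of Theorem~\ref{main} with $l_n$ substituted for $d(x)$, expressions that no longer depend on $x$.

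Finally I would integrate these $x$-free bounds over $M^n$ and use $\int_{M^n}\id\mathrm{vol}_\metricg = |M^n|$. The upper bound is then immediate, and for the two lower bounds one divides through by $|M^n|$ to match the stated form. I do not expect a genuine obstacle: the corollary is a routine consequence of Theorem~\ref{main}, and the single step that deserves a word of justification is the uniform inequality $d(x) \ge l_n$, which is exactly Klingenberg's lemma in the conjugate-point-free setting.
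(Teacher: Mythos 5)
Your proposal is correct and follows the same route the paper intends: the paper dispatches this corollary with the remark that ``simple integration of the presented estimates gives bounds on the counting function,'' i.e.\ one integrates the pointwise bounds of Theorem~\ref{main} over $M^n$ after noting that $d(x)\ge l_n$ uniformly. Your additional justifications --- Klingenberg's lemma giving $\mathrm{inj}(M^n)=\tfrac12 l_n$ and the monotonicity of the correction terms in $d(x)$ --- are exactly the details the paper leaves implicit.
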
 
\subsection{Estimates of eigenfunctions and heat trace for the Laplacian on compact manifolds hyperbolic near $x$ }
The above estimates can be used to obtain information about the eigenfunctions.
Let us assume that $\lambda^2$ is an eigenvalue of the Laplace operator on a closed manifold $M^n$ that is hyperbolic near $x$. Suppose 
$\varphi$ is a corresponding normalized eigenfunction, then by the definition of a local counting function we have,
\begin{equation} \label{pomm}
|\varphi(x) |^2 \leq  \limsup_{\tau \to \lambda^+}  N_{n,x}(\tau)-  \liminf_{\tau \to \lambda^-}  N_{n,x}(\tau)
\end{equation}
where the equality holds only for single eigenvalues. Let us use inequalities (\ref{upperbound}), (\ref{lowerbound}) one more time to show
\begin{equation}
|\varphi(x) |^2 \leq \frac{4}{\pi d(x)} \rho_{d(x),0} \ast \partial_\lambda p_a(n,\lambda)+\|G_n\|_{\infty}.
\end{equation} 
Let us summarise these considerations.
\begin{cor}
Let $(M^n,\metricg)$ be a Riemannian manifold and let $x \in M$ be a point such that the metric ball of radius
$d(x)/2$ is isometric to a ball in $\mathbb{H}^n$.
Let $\varphi$ be an eigenfunction of the Laplacian on  $M^n$  with eigenvalue $\lambda^2$, then
\begin{equation}
|\varphi(x)|^2 \leq \frac{8 n\ \nu_{\lfloor\frac{n+2}{2} \rfloor}^2 \omega_n}{ d(x)(2 \pi)^{n+1}} \left(\lambda+ \frac{\nu_{\lfloor\frac{n+2}{2} \rfloor}}{d(x)} \right)^{n-1}+\|G_n\|_{\infty},
\end{equation}
where the supremum norm of $G_n$ is given by (\ref{negative}).
\end{cor}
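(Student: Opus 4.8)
The reduction to a single convolution has already been carried out in the paragraph preceding the statement: combining (\ref{pomm}) with the upper bound (\ref{upperbound}) applied at $\lambda^+$ and the lower bound (\ref{lowerbound}) applied at $\lambda^-$, using that $\rho_{d(x)}\ast p_a(n,\cdot)$ and $\rho_{d(x),0}\ast\partial_\lambda p_a(n,\cdot)$ are continuous (as $\rho$ is Schwartz and $p_a(n,\cdot)$ has polynomial growth, so the one-sided limits are plain values and the $\rho_{d(x)}\ast p_a$ terms cancel), and that $|\rho_{d(x)}\ast G_n(\lambda)|\le\|G_n\|_\infty$ by Young's inequality and $\|\rho_{d(x)}\|_1=1$, one obtains
\[
|\varphi(x)|^2 \;\le\; \frac{4}{\pi\, d(x)}\,\rho_{d(x),0}\ast\partial_\lambda p_a(n,\lambda)\;+\;\|G_n\|_\infty .
\]
So the whole content of the Corollary is an explicit estimate of $\rho_{d(x),0}\ast\partial_\lambda p_a(n,\lambda)$.

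The first step is a pointwise bound on the integrand. Since $p_a(n,\tau)=F_n(\tau)-G_n(\tau)$, inequality (\ref{bounds}) gives
\[
0\;\le\;\partial_\lambda p_a(n,\tau)=F_n'(\tau)-G_n'(\tau)\;\le\;\frac{n\,\omega_n}{(2\pi)^n}\,\tau_+^{n-1}.
\]
Moreover $\rho_{d(x),0}$ is nonnegative: by Notation~\ref{notation} it is the rescaling $\rho_{d(x),0}(\nu)=d(x)\,\rho_{1,0}(d(x)\,\nu)$ of $\rho_{1,0}(\tau)=\int_\tau^\infty t\,\rho(t)\,\id t$, and since $\rho\ge 0$ is even and rapidly decreasing, $\rho_{1,0}$ is nonnegative, even and rapidly decreasing, with maximum at the origin. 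By monotonicity of the integral,
\[
\rho_{d(x),0}\ast\partial_\lambda p_a(n,\lambda)\;\le\;\frac{n\,\omega_n}{(2\pi)^n}\,\rho_{d(x),0}\ast(\,\cdot\,)_+^{n-1}(\lambda).
\]

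The second step is the convolution estimate itself, for which I would use the inequality underlying the remainder term in the Fourier--Tauberian theorem quoted just before Theorem~\ref{main}, i.e.\ Corollary~2.3 of \cite{Safarov}: for every $\delta>0$,
\[
\rho_{\delta,0}\ast(\,\cdot\,)_+^{n-1}(\lambda)\;\le\;\nu_{\lfloor\frac{n+2}{2}\rfloor}^{2}\,\Big(\lambda+\tfrac{1}{\delta}\,\nu_{\lfloor\frac{n+2}{2}\rfloor}\Big)^{n-1}.
\]
(Alternatively this can be obtained directly by writing $\rho_{\delta,0}\ast(\,\cdot\,)_+^{n-1}(\lambda)=\int\rho_{\delta,0}(\nu)(\lambda-\nu)_+^{n-1}\,\id\nu\le\int\rho_{\delta,0}(\nu)(\lambda+|\nu|)^{n-1}\,\id\nu$ and bounding the moments of $\rho_{\delta,0}$ in terms of $\nu_{\lfloor\frac{n+2}{2}\rfloor}$, as in \cite{Safarov}.) Taking $\delta=d(x)$, chaining the last three displays, and using $\frac{4}{\pi}\cdot\frac{n\,\omega_n}{(2\pi)^n}=\frac{8n\,\omega_n}{(2\pi)^{n+1}}$ produces precisely the claimed inequality.

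The one potential obstacle is purely a matter of bookkeeping: one must follow the dilation $\rho_{d(x),0}(\nu)=d(x)\,\rho_{1,0}(d(x)\,\nu)$ through the convolution estimate so that the constant $\nu_{\lfloor\frac{n+2}{2}\rfloor}^{2}$ and the shift $\nu_{\lfloor\frac{n+2}{2}\rfloor}/d(x)$ emerge exactly as in Theorem~\ref{main}, and check that replacing the one-sided limits of $N_{n,x}$ by values of the smooth majorant and minorant is legitimate; both are routine given the rapid decay of $\rho$ and the polynomial growth of $p_a(n,\cdot)$, so no idea beyond those already used for Theorem~\ref{main} is needed.
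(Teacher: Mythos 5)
Your proposal is correct and follows essentially the same route as the paper: the paper derives the intermediate bound $|\varphi(x)|^2\le \frac{4}{\pi d(x)}\,\rho_{d(x),0}\ast\partial_\lambda p_a(n,\lambda)+\|G_n\|_\infty$ from (\ref{pomm}), (\ref{upperbound}), (\ref{lowerbound}) exactly as you do, and then invokes (\ref{bounds}) together with Safarov's convolution estimate (the same one used in Theorem~\ref{main}) to produce the explicit constant. Your bookkeeping of the dilation and of $\frac{4}{\pi}\cdot\frac{n\omega_n}{(2\pi)^n}=\frac{8n\omega_n}{(2\pi)^{n+1}}$ matches the stated inequality.
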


Similarly bounds for the local heat trace may be obtained. For $t> 0$ the local heat trace $k_t(x)$
is defined as the diagonal of the integral kernel of the heat operator and can be expressed in terms of the eigenfunctions as
 \[
 k_t(x)= \sum_{\lambda_j \geq 0} e^{-\lambda_j^2 t} |\varphi_j(x)|^2.
 \] Following \cite{Strohmaier} let us denote by $R_t^c$ the remainder of a truncated  series at $c>0$:
 \begin{equation}
 R_t^c(x)= \sum_{\lambda_j^2 > c} e^{-\lambda_j^2 t} |\varphi_j(x)|^2.
 \end{equation} 
It is often necessary to estimate this quantity in numerical computations if only finitely many eigenvalues are available.
The quantity $R_t^c(x)$ represents the error made when the expansion series for the local heat trace is truncated.
Let us introduce a rescaled counting function $\tilde{N}_x(\tau):= N_x(\sqrt{\tau})$. Then, the remainder, $R_t^c$, in terms of rescaled counting function is given by $ R_t^c(x)= \int_{c^+}^{\infty} \tilde{N}_x'(\tau) e^{-t \tau} \, \id \tau.$ By integration by parts we obtain 
 \begin{equation} \label{heattrace1}
 R_t^c(x)=- \lim_{\varepsilon \to 0^+} \tilde{N}_x(c+ \varepsilon) e^{-ct} + t \int\limits_c^{\infty} \tilde{N}_x(\tau) e^{-t \tau} \, \id \tau.
 \end{equation}
As usual the incomplete gamma function, $\Gamma: \C \times \R_+ \to \C $, is defined by
\[
\Gamma(z,r):= \int\limits_r^{\infty} e^{-t}t^{z-1} \, \id t.
\]
 Of course $\Gamma(z,0)$ is just the usual gamma function. Equation (\ref{heattrace}) together with our estimates imply the following bound on the remainder
\begin{equation}
 R_t^c(x)\! \leq \! - \! \lim_{\varepsilon \to 0^+} \tilde{N}_x(c+ \varepsilon) e^{-ct}+ c_1 t^{-\frac{n}{2}} \Gamma \left( \frac{n}{2} \! + \! 1,tc \right)+ c_1 c_2 \sum_{l=0}^{n-1} \binom{n-1}{l} c_3^{n-1-l} t^{-\frac{l}{2}} \Gamma \left(\! \frac{l}{2} \! +\! 1, tc \! \right)\!,
\end{equation}
where $c_1= \omega_n/(2 \pi)^n$, $ c_2= n(2 \nu^2_{\lfloor \frac{n+2}{2}\rfloor}+ \pi\nu_{\lfloor \frac{n+2}{2}\rfloor} )/ (d(x) \pi) $, $c_3= \nu_{\lfloor \frac{n+2}{2}\rfloor}/d(x) $.  Since
$$
k_t(x)= \lim_{c \to 0} R^c_t(x) + \sum_{\lambda_j=0} | \varphi_j(x)|^2,
$$
the estimate (\ref{pomm}) implies the following theorem:
\begin{thm}\label{heattrace}
Let $(M^n,\metricg)$ be a Riemannian manifold and let $x \in M$ be a point such that the metric ball of radius
$d(x)/2$ is isometric to a ball in $\mathbb{H}^n$. 
 Then the local heat trace satisfies the estimate
\begin{eqnarray*}
k_t(x) &\!\!\!\!\! \leq & \!\!\!\!\!\! \frac{\omega_{n}}{(2 \pi)^{n}} \! \left[ \! \Gamma \! \left( \! \frac{n \! + \! 2}{2} \! \right)\! t^{-\frac{n}{2}} \! + \!  \frac{n \Gamma \! \left( \! \frac{n+1}{2} \! \right) \! (2 \nu^2_{\lfloor \frac{n+2}{2}\rfloor} \!+ \!\pi \nu_{\lfloor \frac{n+2}{2}\rfloor})} {d(x) \pi } \! \left( \!\frac{1}{ \sqrt{t}} \! + \! \frac{\nu_{\lfloor \frac{n+2}{2}\rfloor}}{d(x)} \! \right)^{n-1} \! \right] \! ,
\end{eqnarray*}
for $x$ in the ball locally isometric to the hyperbolic $n$-space, $d(x)$ is a twice of the maximal radius of this ball.
\end{thm}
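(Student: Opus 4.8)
The plan is to read off the statement directly from the bound on the truncated heat remainder $R_t^c(x)$ obtained just before the theorem, by sending $c\to 0$. First I would note that for $0<c<\lambda_1^2$, with $\lambda_1$ the smallest nonzero $\lambda_j$, the index set $\{j:\lambda_j^2>c\}$ does not depend on $c$, so
\[
 R_t^c(x)=\sum_{\lambda_j>0}e^{-\lambda_j^2 t}|\varphi_j(x)|^2=k_t(x)-\sum_{\lambda_j=0}|\varphi_j(x)|^2
\]
identically for such $c$. Hence $\lim_{c\to 0}R_t^c(x)$ exists trivially and no dominated-convergence argument is needed on the left-hand side; this is exactly the identity $k_t(x)=\lim_{c\to0}R_t^c(x)+\sum_{\lambda_j=0}|\varphi_j(x)|^2$ recorded above.

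Next I would pass to the limit $c\to 0$ on the right-hand side of the bound
\[
 R_t^c(x)\leq-\lim_{\varepsilon\to0^+}\tilde N_x(c+\varepsilon)\,e^{-ct}+c_1 t^{-n/2}\Gamma\!\left(\tfrac n2+1,tc\right)+c_1c_2\sum_{l=0}^{n-1}\binom{n-1}{l}c_3^{\,n-1-l}t^{-l/2}\Gamma\!\left(\tfrac l2+1,tc\right),
\]
where $c_1=\omega_n/(2\pi)^n$, $c_2=n(2\nu^2+\pi\nu)/(d(x)\pi)$, $c_3=\nu/d(x)$ and $\nu=\nu_{\lfloor(n+2)/2\rfloor}$. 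Since $\tilde N_x(\tau)=N_x(\sqrt\tau)$ is right continuous with $\tilde N_x(0^+)=\sum_{\lambda_j=0}|\varphi_j(x)|^2$, the boundary term tends to $-\sum_{\lambda_j=0}|\varphi_j(x)|^2$, and the elementary fact $\Gamma(z,tc)\to\Gamma(z,0)=\Gamma(z)$ as $tc\to0$ disposes of each incomplete Gamma factor. Adding $\sum_{\lambda_j=0}|\varphi_j(x)|^2$ to both sides, the zero-mode contributions cancel and I obtain
\[
 k_t(x)\leq c_1\,\Gamma\!\left(\tfrac{n+2}{2}\right)t^{-n/2}+c_1c_2\sum_{l=0}^{n-1}\binom{n-1}{l}c_3^{\,n-1-l}t^{-l/2}\,\Gamma\!\left(\tfrac l2+1\right).
\]

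Finally I would put the sum into closed form. The Gamma function is decreasing then increasing on $[1,\infty)$ with $\Gamma(1)=\Gamma(2)=1$, so $\Gamma(\tfrac l2+1)\leq\Gamma(\tfrac{n+1}{2})$ for every $l\in\{0,1,\dots,n-1\}$; pulling this constant out and applying the binomial theorem gives
\[
 \sum_{l=0}^{n-1}\binom{n-1}{l}c_3^{\,n-1-l}t^{-l/2}=\left(t^{-1/2}+c_3\right)^{n-1}=\left(\tfrac1{\sqrt t}+\tfrac{\nu}{d(x)}\right)^{n-1}.
\]
Substituting the values of $c_1,c_2,c_3$ and using $\Gamma(\tfrac n2+1)=\Gamma(\tfrac{n+2}{2})$ reproduces the asserted estimate verbatim.

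I expect the only points needing genuine attention to be: (i) the legitimacy of the termwise limit $c\to0$ and the claim that $\tilde N_x(c+\varepsilon)$ captures precisely the full zero-eigenvalue contribution at $x$ — both handled by the stabilisation of $R_t^c(x)$ for small $c$ and by right continuity of $N_x$; and (ii) the replacement of the whole family $\Gamma(\tfrac l2+1)$, $l=0,\dots,n-1$, by the single constant $\Gamma(\tfrac{n+1}{2})$, which is immediate from the shape of $\Gamma$ on $[1,\infty)$ when $n\geq3$ and in dimension $n=2$ should be checked by hand using the sharper two-dimensional estimate and the numerical value of $\nu_2$. Everything else is bookkeeping of the explicit constants.
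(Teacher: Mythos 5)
Your proposal is correct and follows essentially the same route as the paper: the paper's ``proof'' consists precisely of passing to the limit $c\to 0$ in the displayed bound on $R_t^c(x)$, using the identity $k_t(x)=\lim_{c\to 0}R_t^c(x)+\sum_{\lambda_j=0}|\varphi_j(x)|^2$ and $\Gamma(z,tc)\to\Gamma(z)$, and then resumming via the binomial theorem. Your caveat about $n=2$ is well taken --- for $l=0$ one has $\Gamma(1)=1>\Gamma(3/2)$, so the replacement $\Gamma(\tfrac{l}{2}+1)\le\Gamma(\tfrac{n+1}{2})$ fails there and the $t$-independent term of the theorem's bound should carry $\Gamma(1)$ rather than $\Gamma(3/2)$; this is a minor imprecision in the paper's stated constant in dimension two rather than a gap in your argument, which is complete for $n\ge 3$.
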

For hyperbolic manifolds integration over $x$ gives a bound on the heat trace. 
 \begin{cor}
Let $M^n$ be a closed connected hyperbolic manifold of dimension $n$. Then the heat trace  $k_t = \mathrm{tr}(e^{-t \Delta})$ satisfies the estimate
\begin{eqnarray*}
k_t &\!\!\!\!\! \leq & \!\!\!\!\!\! \frac{\omega_{n}|M^n|}{(2 \pi)^{n}} \! \left[ \! \Gamma \! \left( \! \frac{n \! + \! 2}{2} \! \right)\! t^{-\frac{n}{2}} \! + \!  \frac{n \Gamma \! \left( \! \frac{n+1}{2} \! \right) \! (2 \nu^2_{\lfloor \frac{n+2}{2}\rfloor} \!+ \!\pi \nu_{\lfloor \frac{n+2}{2}\rfloor})} {l_n \pi } \! \left( \!\frac{1}{ \sqrt{t}} \! + \! \frac{\nu_{\lfloor \frac{n+2}{2}\rfloor}}{l_n} \! \right)^{n-1} \! \right] \!,
\end{eqnarray*}
where $l_n$ is the length of the shortest geodesic in $M^n$.
 \end{cor}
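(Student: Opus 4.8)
The plan is to obtain this from Theorem~\ref{heattrace} by a pointwise estimate that is then integrated over $M^n$. First I would note that, by definition of the heat operator and the $L^2$-normalisation of the eigenfunctions,
\[
k_t=\mathrm{tr}(e^{-t\Delta})=\sum_{\lambda_j\ge 0}e^{-\lambda_j^2 t}=\int_{M^n}\Bigl(\sum_{\lambda_j\ge 0}e^{-\lambda_j^2 t}|\varphi_j(x)|^2\Bigr)\id\mathrm{vol}_\metricg(x)=\int_{M^n}k_t(x)\,\id\mathrm{vol}_\metricg(x),
\]
where for $t>0$ the interchange of sum and integral is legitimate by monotone convergence. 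Hence it is enough to bound $k_t(x)$ uniformly in $x$ and multiply by $|M^n|$.

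The key point is the geometric fact that on a closed hyperbolic manifold $d(x)\ge l_n$ for every $x\in M^n$. Since $M^n$ has constant curvature $-1$ it has no conjugate points, so $\mathrm{inj}(x)\ge\mathrm{inj}(M^n)=\tfrac12 l_n$ for every $x$ (for a closed manifold without conjugate points the injectivity radius equals one half the length of the shortest closed geodesic, by Klingenberg's lemma). For any $r\le\mathrm{inj}(x)$ the exponential map $\exp_x$ carries the $r$-ball in $T_xM^n$ isometrically onto $B_r(x)$ — two constant-curvature metrics being locally isometric — so $B_{l_n/2}(x)$ is isometric to a ball in $\Hy^n$; thus the maximal radius of such a ball is at least $l_n/2$ and therefore $d(x)\ge l_n$. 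In particular the hypothesis of Theorem~\ref{heattrace} is met at every point of $M^n$.

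Finally I would apply Theorem~\ref{heattrace} at each $x$ together with a monotonicity observation: in the bound provided there, $d(x)$ occurs only in denominators, with all remaining factors (the values of $\Gamma$ and the numbers $\nu_{\lfloor(n+2)/2\rfloor}$) positive, so the right-hand side is a decreasing function of $d(x)$. Replacing $d(x)$ by the smaller quantity $l_n$ therefore only enlarges it, and yields a bound for $k_t(x)$ that is constant in $x$. Integrating this constant over $M^n$ and invoking the first paragraph gives exactly the claimed estimate. The only genuinely non-routine ingredient is the inequality $d(x)\ge l_n$; everything else is the monotonicity of an explicit expression and the integration of a constant, so I expect no substantive obstacle.
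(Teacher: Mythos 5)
Your proposal is correct and follows the same route the paper intends: the corollary is obtained by integrating the pointwise bound of Theorem~\ref{heattrace} over $M^n$, after observing that $d(x)\ge l_n$ at every point and that the right-hand side of that bound is decreasing in $d(x)$. The paper states this in a single sentence (``integration over $x$ gives a bound on the heat trace''); you have merely supplied the details (the Klingenberg/no-conjugate-points argument for $d(x)\ge l_n$, the monotonicity check, and the Tonelli interchange) that the authors leave implicit.
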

 \subsection{Estimates of the derivatives of eigenfunctions}
In the previous subsection obtained bounds for the eigenfunctions of the Laplace operator. A similar technique may be used to obtain bounds
for the derivatives of the eigenfunctions.  Assuming that $M^n$ is hyperbolic near $x$ notice that 
\begin{equation}\label{pochodna}
\sum_{j=0}^{\infty} h(r_j) |\vec{v} \varphi_j(x)|^2=- \frac{1}{2} {\partial_u} \left. \tilde{k}_n(x,y) \right|_{u=0} ,
\end{equation}
where $\vec{v}$ is a tangent vector at a point $x$.
\begin{dfn}
Let $\nabla^l \varphi$ denotes $l$-th covariant derivative of $ \varphi \in C^{\infty}(M^n)$. Define the absolute value of $l$-th covariant derivative  of $\varphi$ by
\begin{equation}
 |\nabla^l \varphi|^2 := \metricg^{i_1 j_1} \metricg^{i_2 j_2} \ldots \metricg^{i_l j_l} (\nabla^l \varphi)_{i_1 i_2 \ldots i_l} \overline{(\nabla^l \varphi)_{j_1 j_2 \ldots j_l} }\end{equation}
for $l \in \N^+$. 
\end{dfn}
Define
\[N_{n,x}^l(\tau):=\sum_{j=0}^{\infty} H(\tau^2 - \lambda_j^2)| \nabla^l \varphi_j(x)|^2.\]
Equation (\ref{pochodna}) implies the following theorem about the function $N_x^1$.
\begin{lem}\label{counting2}
The cosine transform of the derivative of $N^1_{n,x}$ coincides in the interval $(-d(x),d(x))$ with the cosine transform of the derivative of $F^1_{n}$ for $n \in \N_+$, $n\geq 2$ where
\begin{eqnarray}
{F^1_{2m+2}}'( \tau) &\!\!\!\!\!\!=& \!\!\!\!\!\! \frac{2 H \! \left( \tau -\! m-\!\frac{1}{2} \right)}{(4 \pi)^{m+1} m!} \tanh \left(\!\pi \sqrt{\tau^2-\left(m+\frac{1}{2}\right)^{\! 2}}\right) \! \tau^3 \prod_{l=0}^{m-1} \left(\tau^2-m^2+l^2-m+l\right) \!,\nonumber \\
{F^1_{2m+1}}'(\tau) & \!\!\!\!\!\!=&\!\!\!\!\!\! \frac{2 H(\tau-m)}{(2 \pi)^{m+1}(2m-1)!!}\tau^3 \sqrt{\tau^2-m^2} \prod_{l=1}^{m-1}(\tau^2+l^2-m^2),\nonumber \\
F_{n}(0) &=& 0.
\end{eqnarray}  
\end{lem}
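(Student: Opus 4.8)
The plan is to mirror the derivation of Lemma~\ref{counting} essentially line by line, with the diagonal of the shifted wave kernel replaced by $-\tfrac12\partial_u$ of it; the one genuinely new ingredient is the identity (\ref{pochodna}). First I would turn (\ref{pochodna}) into a statement about $N^1_{n,x}$. Fixing an orthonormal frame $(e_a)_{a=1}^n$ of $T_xM$ and using $|\nabla\varphi_j(x)|^2=\sum_{a=1}^n|e_a\varphi_j(x)|^2$, and noting that the right-hand side of (\ref{pochodna}) does not depend on which unit direction is used (the kernel is a function of $u$ only), I would apply (\ref{pochodna}) with $\funcg$ replaced by $\tilde\funcg$ --- legitimate since $\tilde\funcg\in C_0^\infty(-d(x),d(x))$ by Lemma~\ref{lem} --- and sum over the frame to get $-\tfrac n2\,\partial_u\tilde k_{n,\tilde\funcg}(x,y)\big|_{u=0}=\sum_j\tilde h(\kappa_j)|\nabla\varphi_j(x)|^2=\sum_j h(\lambda_j)|\nabla\varphi_j(x)|^2$, where $\kappa_j$ is the shifted eigenvalue and the last equality uses $\tilde h(\kappa_j)=h(\lambda_j)$. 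Since the left-hand sum is exactly $\int_\R {N^1_{n,x}}'(\lambda)\,h(\lambda)\,\id\lambda$, this gives, for every $\funcg\in C_0^\infty(-d(x),d(x))$ with cosine transform $h$,
$$\int_\R {N^1_{n,x}}'(\lambda)\,h(\lambda)\,\id\lambda \;=\; -\frac n2\,\partial_u\tilde k_{n,\tilde\funcg}(x,y)\Big|_{u=0}.$$

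Next I would evaluate the right-hand side. The recursion (\ref{dotw}) yields, for every $n\ge1$, the clean identity $\partial_u\tilde k_{n,\funcg}(u)=-4\pi\,\tilde k_{n+2,\funcg}(u)$ (check this directly for $n=1,2$ and read it off the $\partial_u^m$ formulas for $n\ge3$), so the displayed quantity equals $2\pi n\,\tilde k_{n+2,\tilde\funcg}(x,x)$. Now apply Lemma~\ref{eventrace} with $n$ replaced by $n+2$: writing $n=2m+1$ or $n=2m+2$, use $(z)_{m+1}=(z)_m\,(z+m)$ to peel off one Pochhammer factor and observe that the prefactors collapse, $2\pi(2m+1)/\big((2\pi)^{m+2}(2m+1)!!\big)=1/\big((2\pi)^{m+1}(2m-1)!!\big)$ and $2\pi(2m+2)/\big((4\pi)^{m+2}(m+1)!\big)=1/\big((4\pi)^{m+1}m!\big)$. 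What remains is precisely the integrand of Lemma~\ref{eventrace} multiplied by the extra factor $\big(\tfrac{n-1}2\big)^2+t^2$. Finally, running the substitution $\lambda^2=t^2+(n-1)^2/4$ used in the proof of Lemma~\ref{counting} --- which replaces $\tilde h(t)$ by $h(\lambda)$, replaces $t\,\id t$ by $\lambda\,\id\lambda$, supplies the factor $2$ and the cut-off $H(\lambda-\tfrac{n-1}2)$, and turns the Pochhammer products into $\prod_{l=0}^{m-1}(\lambda^2-m^2+l^2-m+l)$ resp.\ $\prod_{l=1}^{m-1}(\lambda^2-m^2+l^2)$ --- the extra factor $\big(\tfrac{n-1}2\big)^2+t^2$ becomes precisely $\lambda^2$. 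Hence the linear factor $\tau$ present in $F_n'$ of Lemma~\ref{counting} is replaced by $\tau^3$, which is exactly the asserted ${F^1_n}'$; one imposes $F^1_n(0)=0$ as a normalisation. Since $\funcg$ was an arbitrary element of $C_0^\infty(-d(x),d(x))$, the cosine transforms of ${N^1_{n,x}}'$ and ${F^1_n}'$ coincide on $(-d(x),d(x))$.

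The step needing the most care is the first one, and within it the only delicate point is the constant $n$. It arises from the sum over the orthonormal frame, and its correctness rests on the computation underlying (\ref{pochodna}): along the diagonal the mixed Hessian of $u$ in the upper half-space model is $\partial_{x_i}\partial_{y_j}u\big|_{y=x}=-\tfrac12\,x_n^{-2}\,\delta_{ij}=-\tfrac12\,\metricg_{ij}(x)$, so that for a radial kernel $\psi(u)$ one has $\vec v_x\vec v_y\,\psi(u)\big|_{y=x}=-\tfrac12\psi'(0)$ for every unit $\vec v$, whence summing over the frame produces the operator $-\tfrac n2\,\partial_u\big|_{u=0}$. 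Everything downstream of this is the machinery already assembled (finite propagation speed, Lemma~\ref{lem}, Lemma~\ref{eventrace}) and needs no new analytic input; as usual one should remark in passing that ${N^1_{n,x}}'$ is a well-defined temperate distribution, so that the cosine transform in the statement is meaningful.
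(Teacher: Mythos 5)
Your proposal is correct and follows exactly the route the paper intends: the paper offers no written proof of Lemma~\ref{counting2} beyond the remark that equation~(\ref{pochodna}) implies it, and your argument is the natural completion of that remark via the recursion $\partial_u\tilde k_{n}=-4\pi\tilde k_{n+2}$, Lemma~\ref{eventrace} at dimension $n+2$, and the same change of variables as in Lemma~\ref{counting}. The constants check out (the frame-sum factor $n$ cancels against the ratio of the prefactors in Lemma~\ref{eventrace} for $n$ and $n+2$, leaving exactly the extra factor $\tau^2$), and your Hessian computation $\partial_{x_i}\partial_{y_j}u|_{y=x}=-\tfrac12\metricg_{ij}(x)$ correctly justifies (\ref{pochodna}) itself.
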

This Lemma gives us a tool to estimate the first derivative of an eigenfunction; an application of this result may be found in~\cite{Strohmaier}. Let us define ${G^{1}_{n}} '(\tau)$ as the part of the Laurent series of ${F^1_{n}}'(\tau)$ with negative powers  of $\tau$ multiplied by a characteristic function of the set $[(n-1)/2,\infty)$. Set $G^1_{2m+1}(0)=0$. The function ${G^1_{n}}'$ is non-positive, therefore
\begin{equation} \label{infty2}
 \rho_{\delta} \ast {G^1_n}( \tau) \leq 0, \quad \rho_{\delta,0} \ast {G^1_n}'\leq 0
\end{equation}
for $\delta>0$. By Young's inequality we obtain
\begin{equation} \label{negative2}
 |\rho_{\delta} \ast G_n^1( \tau)| \leq \| G_n^1\|_{\infty}. 
 \end{equation}
Moreover, one can show that $G_n^1$ is bounded and
\begin{equation} 
 \| G_n^1\|_{\infty} \!\! \leq \!\! \begin{cases}
\frac{17}{1920 \pi}, &\!\!\!\!  n=2, \\
\frac{11}{240 \pi^2}, & \!\!\!\! n=3, \\
\min \left\{\frac{2\left( \left(m-\frac{1}{2}\right)^2 +\frac{1}{4 \pi^2} \right)^m (2m+3)!}{\pi (4 \pi)^{m+4}m!},  \frac{e^{2 \pi(m+1/2)} (2m+3)!}{m! 2^{4m+5}} \pi^{2m+5} \right\}\!, & \!\!\!\!  n=2m+2, \\
\frac{11}{60}\frac{m^{2m+3}(1-m^4)+(2m-1)!m^4(1+m^{2m-2})}{(2 \pi)^{m+1}(2m-1)!!(1-m^4)}, &\!\!\!\!\!\!\!\!\!\!\!\! n=2m+1 \ \textrm{and } m \ \textrm{odd},\\
\frac{11}{60}\frac{m^{2m+3}(1-m^4)+(2m-1)!m^6(1+m^{2m-4})}{(2 \pi)^{m+1}(2m-1)!!(1-m^4)}, &\!\!\!\!\!\!\!\!\!\!\!\! n=2m+1 \ \textrm{and } m \ \textrm{even}.
\end{cases}
\end{equation}
By the definition of $N^1_{n,x}$ we have
\begin{equation}
|\nabla \varphi(x) |^2 \leq \overline{ \lim_{\tau \to \lambda^+} } N_{n,x}^1(\tau)- \underline{ \lim_{\tau \to \lambda^-} } N_{n,x}^1(\tau),
\end{equation}
where equality holds only for simple eigenvalues. By a similar argument as before
\begin{equation}
|\nabla \varphi(x)|^2 \leq \frac{8 (n+2)\ \nu_{\lfloor\frac{n+4}{2} \rfloor}^2 \omega_n}{ d(x)(2 \pi)^{n+1}} \left(\lambda+ \frac{\nu_{\lfloor\frac{n+4}{2} \rfloor}}{d(x)} \right)^{n+1}+ \| G_n^1 \|_{\infty}.
\end{equation}
\begin{exm}
Suppose that $\varphi$ is an eigenfunction of the Laplacian wit eigenvalue $\lambda^2$ on a 
compact Riemannian manifold $M^n$ hyperbolic near $x \in M^n$
and let $d(x)$ be as before. Then
\begin{eqnarray*}
|\nabla \varphi(x)|^2 &\leq & \frac{4\ \nu_{3}^2}{ d(x)\pi^2} \left(\lambda+ \frac{\nu_{3}}{d(x)} \right)^{3}+ \frac{17}{1920 \pi} \quad \textrm{ for } n=2 \\
|\nabla \varphi(x)|^2 & \leq & \frac{10 \ \nu_{3}^2}{ 3 d(x)\pi^3} \left(\lambda+ \frac{\nu_{3}}{d(x)} \right)^{4}+ \frac{11}{240 \pi^2} \quad \textrm{ for } n=3 \\
|\nabla \varphi(x)|^2 & \leq & \frac{3 \nu_{4}^2 }{ 4 d(x)\pi^3} \left(\lambda+ \frac{\nu_{4}}{d(x)} \right)^{5}+ \frac{367}{64512 \pi^2} \quad \textrm{ for } n=4 
\end{eqnarray*}
\end{exm}
\subsection{Estimates of higher derivatives of eigenfunctions on hyperbolic surfaces} 
 Adopting the method from~\cite{Strohmaier}, we will derive bounds on the higher derivatives of the eigenfunctions on hyperbolic surfaces. By the Selberg pre-trace formula and finite propagation speed, the cosine transform of $\partial_\tau N^2_{2,x}( \tau)$ coincides with the cosine transform of the function $\partial_\tau F_2^2(\tau)$ on the interval $(-d(z),d(z))$, where
\[F_{2}^2(0)=0, \quad {F_{2}^2}'(\tau)= \frac{1}{ 2 \pi} H( \tau^2 -1/4)( |\tau|^3 + |\tau|^5) \tanh(\pi \sqrt{\tau^2 -1/4}).\]
Taking into account the facts that 
\begin{eqnarray*}
\int_{1/2}^{\infty} \tau^3( \tanh(\pi \sqrt{\tau^2- 1/4}) -1)\, \id \tau &=& -\frac{17}{960}, \\
\int_{1/2}^{\infty} \tau^5( \tanh(\pi \sqrt{\tau^2- 1/4}) -1) \, \id \tau&=& -\frac{407}{40320}, 
\end{eqnarray*}
we get the following estimate:
\begin{equation} \label{eqn.2} -\frac{29}{1260 \pi} \leq \sign(\tau) G_{2}^2( \tau) \leq 0, \end{equation}
where $G_{2}^2( \tau):= F_{2}^2(\tau)- \sign( \tau) \frac{1}{8 \pi} \tau^4 - \sign(\tau) \frac{1}{12 \pi} \tau^6$. The Fourier Tauberian Theorem 1.3 in~\cite{Safarov} and estimate~(\ref{eqn.2}) implies the following estimates for $N^2_{2,x}$.
\begin{thm}
The function $N^2_{2,x}$ satisfies 
\begin{eqnarray*} 
N_{2,x}^2( \tau)& \leq & \frac{1}{12 \pi} \left( \tau^6 +  \frac{12 \nu_4^2 + 6 \pi \nu_4}{\pi d(x)} \left( \tau + \frac{\nu_4}{d(x)} \right)^5 \right) \\
& &+ \frac{1}{8 \pi} \left( \tau^4 + \frac{8 \nu_3^2 +4 \pi \nu_3}{d(x) \pi }\left( \tau + \frac{\nu_3}{d(x)} \right)^3 \right)+ \frac{29}{1260 \pi},\\
N_{2,x}^2( \tau)& \geq & \frac{1}{12 \pi} \!\left(\! \tau^6 \! -\! \frac{12 \nu_4^2 }{\pi d(x)} \! \left( \! \tau \! +\!  \frac{\nu_4}{d(x)} \! \right)^5 \right) + \frac{1}{8 \pi} \left( \tau^4 -\frac{8 \nu_3^2 }{d(x) \pi }\left( \tau + \frac{\nu_3}{d(x)} \right)^3 \right)- \frac{29}{1260 \pi}.
\end{eqnarray*}
\end{thm}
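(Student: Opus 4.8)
The proof is the hyperbolic-surface analogue of the argument behind Theorem~\ref{main} and Example~\ref{ex}, now carried out for the weighted counting function $N^2_{2,x}$ and the function $F^2_2$ in place of $N_{n,x}$ and $F_n$; the plan is to split $F^2_2$ into an asymptotic polynomial plus a bounded error, feed this into Safarov's Fourier--Tauberian theorem, and read off the two inequalities from the polynomial case.

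\emph{Step 1: the asymptotic polynomial and the error bound.} Recall that $N^2_{2,x}$ is non-decreasing with support in $[0,\infty)$ and that, by the Selberg pre-trace formula and finite propagation speed, the cosine transform of $\partial_\tau N^2_{2,x}$ agrees on $(-d(x),d(x))$ with that of $\partial_\tau F^2_2$. Extending $F^2_2$ to an odd function of $\tau$ (so that ${F^2_2}'$ is even and non-negative) I write $F^2_2=p_a+G^2_2$ with $p_a(\tau)=\sign(\tau)\bigl(\tfrac1{8\pi}\tau^4+\tfrac1{12\pi}\tau^6\bigr)$, so that $p_a'(\tau)=\tfrac1{2\pi}(|\tau|^3+|\tau|^5)$ and $p_a$ is, up to the constants $\tfrac1{8\pi}$ and $\tfrac1{12\pi}$, a superposition of the model polynomials $\tau^4$ and $\tau^6$. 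The remainder ${G^2_2}'={F^2_2}'-p_a'$ equals $-p_a'<0$ on $(0,\tfrac12)$ and $\tfrac1{2\pi}(\tau^3+\tau^5)(\tanh(\pi\sqrt{\tau^2-1/4})-1)<0$ on $(\tfrac12,\infty)$, hence ${G^2_2}'\le 0$ everywhere; thus $G^2_2$ is monotone on each half-line and $\|G^2_2\|_{L^\infty(\R)}=\lim_{\tau\to\infty}|G^2_2(\tau)|$. Evaluating that limit with the two integral identities quoted before the statement gives $\|G^2_2\|_\infty=\tfrac{29}{1260\pi}$ --- this is precisely (\ref{eqn.2}) --- and therefore $|\rho_\delta\ast G^2_2|\le\tfrac{29}{1260\pi}$ and $\rho_{\delta,0}\ast{G^2_2}'\le 0$ for all $\delta>0$, as in (\ref{infty})--(\ref{infty2}).

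\emph{Step 2: Safarov's Tauberian comparison.} I apply Theorem~1.3 of~\cite{Safarov} at scale $\delta=d(x)$ to $N^2_{2,x}$, with the test functions $\rho$ of Notation~\ref{notation} (whose hypotheses were verified in the proof of Theorem~\ref{main}). Exactly as there, using $\int|\nu|\rho(\nu)\,\id\nu\ge\pi/2$ and positivity of $\rho$, one obtains on $(0,\infty)$ that
$$\rho_{d(x)}\ast F^2_2-\tfrac{2}{\pi d(x)}\rho_{d(x),0}\ast{F^2_2}'\ \le\ N^2_{2,x}\ \le\ \rho_{d(x)}\ast F^2_2+\tfrac{2}{\pi d(x)}\rho_{d(x),0}\ast{F^2_2}'.$$
Substituting $F^2_2=p_a+G^2_2$ and using ${G^2_2}'\le 0$ together with $|\rho_{d(x)}\ast G^2_2|\le\tfrac{29}{1260\pi}$, the right-hand side is at most $\rho_{d(x)}\ast p_a+\tfrac{2}{\pi d(x)}\rho_{d(x),0}\ast p_a'+\tfrac{29}{1260\pi}$ and the left-hand side is at least $\rho_{d(x)}\ast p_a-\tfrac{2}{\pi d(x)}\rho_{d(x),0}\ast p_a'-\tfrac{29}{1260\pi}$, so everything reduces to the polynomial case.

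\emph{Step 3: the polynomial estimates.} Writing $p_a=\tfrac1{12\pi}p^{(6)}+\tfrac1{8\pi}p^{(4)}$ with $p^{(k)}(\tau)=\sign(\tau)\tau^k$ and treating each piece with its own test function, the elementary inequalities $\tau^k\le\tfrac12[(\tau+\nu)^k+(\tau-\nu)^k]\le\tau^k+k|\nu|(|\tau|+|\nu|)^{k-1}$ (for the even exponents $k=4,6$), together with parity of $\rho$ and $\int\rho=1$, yield $\rho_{d(x)}\ast p_a(\tau)\ge\tfrac1{12\pi}\tau^6+\tfrac1{8\pi}\tau^4$ for $\tau>0$ (the shift-$0$ case of the computation in the proof of Lemma~\ref{lemma}, the $\tau$-independent remainder being absorbed into the derivative term below), while the same inequalities give the matching upper bound; the term $\rho_{d(x),0}\ast p_a'$ is controlled by Corollary~2.3 of~\cite{Safarov} as in Example~\ref{ex}. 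The degree-$6$ contribution is forced to carry the index $\lfloor 8/2\rfloor=4$ and the degree-$4$ contribution the index $\lfloor 6/2\rfloor=3$ (from the decay condition $|\rho|\le\mathrm{const}\,(1+t^2)^{-m-1}$, $m>k/2$, of Lemma~\ref{lemma}), so the degree-$k$ part produces the correction $\tfrac{k(2\nu_*^2+\pi\nu_*)}{\pi d(x)}(\tau+\nu_*/d(x))^{k-1}$ in the upper estimate and $\tfrac{k\nu_*^2}{\pi d(x)}(\tau+\nu_*/d(x))^{k-1}$ in the lower one, with $\nu_*=\nu_4$ for $k=6$ and $\nu_*=\nu_3$ for $k=4$. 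Multiplying by $\tfrac1{12\pi}$ and $\tfrac1{8\pi}$, summing, and carrying $\pm\tfrac{29}{1260\pi}$ through gives exactly the two asserted inequalities.

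\emph{The main obstacle.} There is no deep difficulty: the theorem is an application of the machinery set up for Theorem~\ref{main} (and used for $N^1_{n,x}$ in Lemma~\ref{counting2}). The two points requiring care are the choice of the odd extension of $F^2_2$, without which the bound on $G^2_2$ is only one-sided on $(0,\infty)$ rather than the uniform bound needed to control $\rho_{d(x)}\ast G^2_2$ on both sides, and the bookkeeping that assigns $\nu_4$ to the sixth-degree part and $\nu_3$ to the fourth-degree part and reproduces the stated constants. The only genuine computation involved is the evaluation of the two integral identities in Step~1.
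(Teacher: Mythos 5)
Your proposal is correct and follows essentially the same route as the paper: decompose $F_2^2$ into the asymptotic polynomial $\sign(\tau)\bigl(\tfrac{1}{8\pi}\tau^4+\tfrac{1}{12\pi}\tau^6\bigr)$ plus a non-increasing error $G_2^2$ whose supremum norm $\tfrac{29}{1260\pi}$ is computed from the two quoted integrals (including the contribution from $(0,\tfrac12)$), and then apply Safarov's Theorem~1.3 with the scale $d(x)$ and the indices $\nu_4$, $\nu_3$ for the degree-$6$ and degree-$4$ parts, exactly as the paper does for Theorem~\ref{main} and Example~\ref{ex}. The only blemish is a transcription slip in Step~3: Safarov's lower bound carries the coefficient $\tfrac{2k\nu_*^2}{\pi d(x)}$ rather than $\tfrac{k\nu_*^2}{\pi d(x)}$, which is what actually reproduces the stated $\tfrac{12\nu_4^2}{\pi d(x)}$ and $\tfrac{8\nu_3^2}{\pi d(x)}$.
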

The theorem above implies the following estimate on the derivatives of eigenfunctions.
\begin{cor}
Let $\varphi$ be a normalised eigenfunction of the Laplace operator on a compact manifold $M^2$ with eigenvalue $\lambda$. Then
\begin{eqnarray*} 
 |\nabla^2 \varphi(x) |^2  & \leq & \frac{1}{12 \pi} \left(  \frac{24 \nu_4^2 + 6 \pi \nu_4}{\pi d(z)} \left( \lambda + \frac{\nu_4}{d(x)} \right)^5 \right) \\
& \ & + \frac{1}{8 \pi} \left(  \frac{16 \nu_3^2 +4 \pi \nu_3}{d(x) \pi }\left( \lambda + \frac{\nu_3}{d(x)} \right)^3 \right)+ \frac{29}{630 \pi}.
\end{eqnarray*}
\end{cor}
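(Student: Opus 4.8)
The plan is to obtain the bound by reading off the size of the jump of the non-decreasing step function $N^2_{2,x}$ at the appropriate value of $\tau$, exactly as was done above for $|\varphi(x)|^2$ and $|\nabla\varphi(x)|^2$. First I would note that, by its very definition, $N^2_{2,x}(\tau)=\sum_{j}H(\tau^2-\lambda_j^2)|\nabla^2\varphi_j(x)|^2$ is non-decreasing in $\tau$ and its jump at $\tau=\lambda$ equals $\sum_{\lambda_j=\lambda}|\nabla^2\varphi_j(x)|^2$, where the sum runs over the members of an orthonormal basis lying in the $\lambda^2$-eigenspace. Since $\varphi$ is a single normalised element of that eigenspace, this gives
\[
|\nabla^2\varphi(x)|^2\ \le\ \limsup_{\tau\to\lambda^+}N^2_{2,x}(\tau)-\liminf_{\tau\to\lambda^-}N^2_{2,x}(\tau),
\]
with equality when $\lambda^2$ is a simple eigenvalue.

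Next I would feed in the two-sided estimate for $N^2_{2,x}$ from the preceding theorem. Both the upper and the lower bounding expressions there are continuous functions of $\tau$ (in fact polynomials for $\tau>1/2$), so the $\limsup$ above is bounded by the upper estimate evaluated at $\tau=\lambda$ and the $\liminf$ is bounded below by the lower estimate evaluated at $\tau=\lambda$. Subtracting the two, the principal polynomial parts $\tfrac{1}{12\pi}\tau^6$ and $\tfrac{1}{8\pi}\tau^4$ cancel and only the remainder terms survive.

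Finally it remains to collect coefficients: the $\nu_4$-terms combine as $(12\nu_4^2+6\pi\nu_4)+12\nu_4^2=24\nu_4^2+6\pi\nu_4$, the $\nu_3$-terms as $(8\nu_3^2+4\pi\nu_3)+8\nu_3^2=16\nu_3^2+4\pi\nu_3$, and the two constants add to $\tfrac{29}{1260\pi}+\tfrac{29}{1260\pi}=\tfrac{29}{630\pi}$, which is precisely the asserted inequality. I do not expect any genuine obstacle here: all the analytic content --- the Selberg pre-trace formula together with finite propagation speed, and the estimate~(\ref{eqn.2}) on $G^2_2$ that feeds the Fourier--Tauberian theorem --- has already been used in proving the preceding theorem, and what remains is only the bookkeeping of the jump and of the coefficients, together with the harmless observation that the one-sided limits of the step function at the single point $\tau=\lambda$ are controlled by the continuous bounding functions there.
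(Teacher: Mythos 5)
Your proposal is correct and follows exactly the route the paper intends: the corollary is the jump estimate applied to the two-sided bounds of the preceding theorem on $N^2_{2,x}$, in the same way inequality (\ref{pomm}) was used for $|\varphi(x)|^2$, and your bookkeeping of the coefficients ($24\nu_4^2+6\pi\nu_4$, $16\nu_3^2+4\pi\nu_3$, and $2\cdot\frac{29}{1260\pi}=\frac{29}{630\pi}$) matches the stated result.
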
 

The same method can be used to get bounds for higher derivatives of eigenfunctions. Again by Selberg's pre-trace formula and finite propagation speed one gets that  the cosine transform of $\partial_{\tau} N^3_{2,x}( \tau)$ coincides with the cosine transform of the function $\partial_{\tau} F_{2}^3(\tau)$ on the interval $(-d(x),d(x))$, where
\[ F_{2,3} (0)=0, \quad F_{2,3} '(\tau)= \frac{1}{2 \pi} H( \tau^2 -1/4)(4 |\tau|^3 +3 |\tau|^5+ |\tau|^7) \tanh(\pi \sqrt{\tau^2 -1/4}).\]
Note that 
\[ \int_{1/2}^{\infty} \tau^7( \tanh(\pi \sqrt{\tau^2- 1/4}) -1) \, \id \tau = -\frac{1943}{215040}, \]
and so
\begin{equation} \label{eqn.3}
 \| G_{2}^3 \|_{\infty} =\frac{2467}{26880 \pi},
 \end{equation}
  where $G_{2}^3(\tau)= F_{2}^3(\tau) - \sign(\tau) \frac{1}{16 \pi}\tau^8 -\sign(\tau)\frac{1}{4 \pi}  \tau^6 - \sign(\tau) \frac{1}{2 \pi}\tau^4 $. Taking into account  Fourier Tauberian Theorem 1.3 in~\cite{Safarov} and estimate~(\ref{eqn.3}) we obtain the following.
  \begin{thm}
The function $N^3_{2,x}$ satisfies 
\begin{eqnarray*} 
N_{2,x}^3( \tau)& \leq & \frac{1}{16 \pi} \! \left( \! \tau^8 \! + \! \frac{16 \nu_5^2 + 8 \pi \nu_5}{\pi d(x)} \! \left( \! \tau \! + \! \frac{\nu_5}{d(x)} \! \right)^7 \right) \! + \! \frac{1}{4 \pi} \! \left( \! \tau^6 \! + \!  \frac{12 \nu_4^2 + 6 \pi \nu_4}{\pi d(x)} \! \left( \! \tau \! + \! \frac{\nu_4}{d(x)} \! \right)^5 \right) \\
& \ &+ \frac{1}{2 \pi} \left( \tau^4 + \frac{8 \nu_3^2 +4 \pi \nu_3}{d(x) \pi }\left( \tau + \frac{\nu_3}{d(x)} \right)^3 \right)+ \frac{2467}{26880 \pi},\\
N_{2,x}^3( \tau)& \geq &  \frac{1}{16 \pi} \left( \tau^8 -  \frac{16 \nu_5^2}{\pi d(x)} \left( \tau + \frac{\nu_5}{d(x)} \right)^7 \right)  +\frac{1}{4 \pi} \left( \tau^6 -  \frac{12 \nu_4^2 }{\pi d(x)} \left( \tau + \frac{\nu_4}{d(x)} \right)^5 \right) \\
& \ &+ \frac{1}{2 \pi} \left( \tau^4 - \frac{8 \nu_3^2 }{d(x) \pi }\left( \tau + \frac{\nu_3}{d(x)} \right)^3 \right)- \frac{2467}{26880 \pi}.
\end{eqnarray*}
\end{thm}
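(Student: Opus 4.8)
The plan is to repeat, step for step, the argument already used above for $N^{2}_{2,x}$, now with the degree-$8$ polynomial in place of the degree-$6$ one. The passage from $M^{2}$ to $\Hy^{2}$ is already in place: by the Selberg pre-trace formula and finite propagation speed, the cosine transform of $\partial_{\tau}N^{3}_{2,x}$ coincides on $(-d(x),d(x))$ with that of $\partial_{\tau}F^{3}_{2}$, where $(F^{3}_{2})'(\tau)=\tfrac{1}{2\pi}H(\tau^{2}-\tfrac14)(4|\tau|^{3}+3|\tau|^{5}+|\tau|^{7})\tanh(\pi\sqrt{\tau^{2}-\tfrac14})$. First I would split $F^{3}_{2}=p_{a}+G^{3}_{2}$, where the asymptotic polynomial $p_{a}(\tau)=\sign(\tau)\bigl(\tfrac{1}{16\pi}\tau^{8}+\tfrac{1}{4\pi}\tau^{6}+\tfrac{1}{2\pi}\tau^{4}\bigr)$ is (the odd extension of) the Taylor part of the Laurent expansion of $F^{3}_{2}$, obtained by replacing $\tanh$ by $1$ and dropping the cut-off at $|\tau|=\tfrac12$; the remainder $G^{3}_{2}$ is then the odd function of $(\ref{eqn.3})$. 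Its derivative $(G^{3}_{2})'$ is even and non-positive, so $\rho_{\delta,0}\ast (G^{3}_{2})'\le 0$; by Young's inequality $|\rho_{\delta}\ast G^{3}_{2}|\le\|G^{3}_{2}\|_{\infty}$ with $\|G^{3}_{2}\|_{\infty}=\tfrac{2467}{26880\pi}$, exactly as for the $G^{2}_{2}$ term above.

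Next I would feed this into Safarov's Fourier--Tauberian theorems. The test function $\rho,\rho_{\delta},\rho_{\delta,0}$ of Notation~\ref{notation} satisfies the hypotheses of Theorem~1.3 of \cite{Safarov} (checked already in the proof of Theorem~\ref{counting}); applying it with $\delta=d(x)$ gives
\[
\rho_{d(x)}\ast F^{3}_{2}(\tau)-\tfrac{2}{\pi d(x)}\,\rho_{d(x),0}\ast (F^{3}_{2})'(\tau)\ \le\ N^{3}_{2,x}(\tau)\ \le\ \rho_{d(x)}\ast F^{3}_{2}(\tau)+\tfrac{2}{\pi d(x)}\,\rho_{d(x),0}\ast (F^{3}_{2})'(\tau),
\]
where I use $\int|\nu|\rho(\nu)\,\id\nu\ge\pi/2$ (from \cite{Safarov}) together with $\partial_{\tau}p_{a}\ge 0$ and $\rho_{d(x),0}\ge 0$. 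Substituting $F^{3}_{2}=p_{a}+G^{3}_{2}$, the $G^{3}_{2}$-contribution is controlled by $\pm\|G^{3}_{2}\|_{\infty}$ (and the term $\rho_{d(x),0}\ast (G^{3}_{2})'$, being $\le 0$, is simply discarded), so both inequalities reduce to bounding $\rho_{d(x)}\ast p_{a}$ and $\rho_{d(x),0}\ast\partial_{\tau}p_{a}$. Since $p_{a}$ is a sum of three monomials $c_{k}\sign(\tau)\tau^{2k}$ with $2k\in\{8,6,4\}$ (and $c_{k}=\tfrac{1}{16\pi},\tfrac{1}{4\pi},\tfrac{1}{2\pi}$), I would invoke Safarov's Corollary~2.3 degree by degree, handling the degree-$2k$ part with the test function associated to $\Delta^{\lfloor(2k+2)/2\rfloor}$ on $[-\tfrac12,\tfrac12]$; this produces the roots $\nu_{5},\nu_{4},\nu_{3}$, and the elementary bounds $\tau^{2k}\le P_{2k}(\tau,\nu)\le\tau^{2k}+2k|\nu|(|\tau|+|\nu|)^{2k-1}$ used in the proof of Lemma~\ref{lemma} yield the $\bigl(\tau+\nu_{j}/d(x)\bigr)^{2k-1}$ terms with coefficient $(2\pi^{-1}\nu_{j}^{2}+\nu_{j})$ on the upper side and $2\pi^{-1}\nu_{j}^{2}$ on the lower side.

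Finally I would collect the three monomial contributions, insert the scaling $\rho_{\delta}(\tau)=\delta\rho(\delta\tau)$ with $\delta=d(x)$, and add $+\|G^{3}_{2}\|_{\infty}$ on the upper side, $-\|G^{3}_{2}\|_{\infty}$ on the lower side; this yields exactly the two displayed inequalities. Essentially everything here is a mechanical transcription of the $N^{2}_{2,x}$ argument, so the only step I expect to require genuine work is pinning down the constant $\|G^{3}_{2}\|_{\infty}$ of $(\ref{eqn.3})$: since $G^{3}_{2}$ is monotone on $[0,\infty)$ its supremum is attained in the limit $\tau\to+\infty$, equal to the value at $\tfrac12$ of the subtracted polynomial plus $\tfrac{1}{2\pi}\int_{1/2}^{\infty}(\tau^{7}+3\tau^{5}+4\tau^{3})\bigl(\tanh(\pi\sqrt{\tau^{2}-\tfrac14})-1\bigr)\,\id\tau$, which is evaluated from the three integrals $\int_{1/2}^{\infty}\tau^{2j+1}\bigl(\tanh(\pi\sqrt{\tau^{2}-1/4})-1\bigr)\,\id\tau$ for $j=1,2,3$ — the last of these, $-\tfrac{1943}{215040}$, being the one newly recorded above.
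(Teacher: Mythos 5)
Your proposal is correct and follows the paper's proof essentially verbatim: the same decomposition $F_2^3=p_a+G_2^3$ with $p_a(\tau)=\sign(\tau)\left(\frac{1}{16\pi}\tau^8+\frac{1}{4\pi}\tau^6+\frac{1}{2\pi}\tau^4\right)$, the same sign and Young's-inequality handling of the $G_2^3$ term, and the same term-by-term application of Safarov's Theorem~1.3 and Corollary~2.3 to the monomials of degrees $8,6,4$, producing $\nu_5,\nu_4,\nu_3$. One minor remark: carrying out your own recipe for the constant (with the sign corrected, i.e.\ $\|G_2^3\|_\infty=p_a(1/2)+\frac{1}{2\pi}\int_{1/2}^{\infty}\left(\tau^7+3\tau^5+4\tau^3\right)\left(1-\tanh\left(\pi\sqrt{\tau^2-1/4}\right)\right)\id\tau$, the supremum being attained as $\tau\to\infty$ since $G_2^3$ is monotone there) yields $\frac{19}{210\pi}\approx\frac{0.0905}{\pi}$, slightly smaller than the paper's stated $\frac{2467}{26880\pi}\approx\frac{0.0918}{\pi}$, so your argument in fact delivers a marginally sharper version of the displayed inequalities.
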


\begin{cor}
Let $\varphi$ be a normalised eigenfunction of the Laplace operator with eigenvalue $\lambda^2$. Then
\begin{eqnarray*} 
 |\nabla^3 \varphi(x) |^2  & \leq & \frac{1}{ 2 \pi^2 d(x)} \left( (4 \nu_5^2 +  \pi \nu_5)\left( \lambda + \frac{\nu_5}{d(x)} \right)^7 
+ (12 \nu_4^2 + 3 \pi \nu_4) \left( \lambda + \frac{\nu_4}{d(x)} \right)^5  +\right. \\
& \ & \left.  \left(16 \nu_3^2 +4 \pi \nu_3\right)\left( \lambda + \frac{\nu_3}{d(x)} \right)^3\right)+ \frac{2467}{13440  \pi}.
\end{eqnarray*}
\end{cor}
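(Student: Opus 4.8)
The plan is to derive the bound on $|\nabla^3\varphi(x)|^2$ directly from the two-sided estimate for $N^3_{2,x}(\tau)$ established in the preceding theorem, by the same jump argument that produced the bounds for $|\varphi(x)|^2$ and $|\nabla\varphi(x)|^2$.

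First I would record that $N^3_{2,x}(\tau)=\sum_{j} H(\tau^2-\lambda_j^2)\,|\nabla^3\varphi_j(x)|^2$ is a non-decreasing step function whose jump at a point $\lambda$ with $\lambda^2$ in the spectrum equals $\sum_{\lambda_j=\lambda}|\nabla^3\varphi_j(x)|^2$. Since $\varphi$ is one of the eigenfunctions with eigenvalue $\lambda^2$, this gives
\[
|\nabla^3\varphi(x)|^2\le \limsup_{\tau\to\lambda^+}N^3_{2,x}(\tau)-\liminf_{\tau\to\lambda^-}N^3_{2,x}(\tau),
\]
with equality for simple eigenvalues; this is the exact analogue of (\ref{pomm}) and of the inequality used for $|\nabla\varphi(x)|^2$.

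Next I would insert the two-sided bound on $N^3_{2,x}$ from the preceding theorem, evaluating its upper bound as $\tau\downarrow\lambda$ and its lower bound as $\tau\uparrow\lambda$. Both bounding expressions are continuous (a polynomial in $\tau$ plus a constant), so the one-sided limits coincide with their values at $\tau=\lambda$, and on subtracting, the leading contributions $\frac{1}{16\pi}\lambda^8$, $\frac{1}{4\pi}\lambda^6$, $\frac{1}{2\pi}\lambda^4$ cancel exactly. What survives is, for each of $\nu_5,\nu_4,\nu_3$, the corresponding remainder term taken once from the upper bound and once from the lower bound, together with $\frac{2467}{26880\pi}+\frac{2467}{26880\pi}=\frac{2467}{13440\pi}$.

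Finally I would carry out the elementary arithmetic of combining coefficients: the $\nu_5$ term contributes $\frac{1}{16\pi}\cdot\frac{(16\nu_5^2+8\pi\nu_5)+16\nu_5^2}{\pi d(x)}\big(\lambda+\tfrac{\nu_5}{d(x)}\big)^7=\frac{4\nu_5^2+\pi\nu_5}{2\pi^2 d(x)}\big(\lambda+\tfrac{\nu_5}{d(x)}\big)^7$, the $\nu_4$ term gives $\frac{12\nu_4^2+3\pi\nu_4}{2\pi^2 d(x)}\big(\lambda+\tfrac{\nu_4}{d(x)}\big)^5$, and the $\nu_3$ term gives $\frac{16\nu_3^2+4\pi\nu_3}{2\pi^2 d(x)}\big(\lambda+\tfrac{\nu_3}{d(x)}\big)^3$; factoring $\frac{1}{2\pi^2 d(x)}$ out of the three reproduces exactly the claimed inequality. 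No step here poses a genuine difficulty; the only point requiring a little care is the justification of the jump inequality together with the observation that the one-sided limits of the continuous bounding functions agree with their values at $\lambda$, which is precisely what makes the polynomial parts cancel.
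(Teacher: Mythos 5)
Your proposal is correct and is exactly the argument the paper intends (the corollary is stated without proof, but the jump inequality $|\nabla^3\varphi(x)|^2\le \limsup_{\tau\to\lambda^+}N^3_{2,x}(\tau)-\liminf_{\tau\to\lambda^-}N^3_{2,x}(\tau)$ followed by subtracting the lower bound from the upper bound of the preceding theorem is precisely the mechanism used for (\ref{pomm}) and for the $|\nabla\varphi|^2$ and $|\nabla^2\varphi|^2$ estimates). Your coefficient arithmetic checks out and reproduces the stated constants, including $\tfrac{2467}{26880\pi}+\tfrac{2467}{26880\pi}=\tfrac{2467}{13440\pi}$.
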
 
Similarly the cosine transform of $\partial_{\tau} N^l_z(\tau)$ coincides with the cosine transform of the function $\partial_{\tau} F_2^l(\tau)$ on the interval $(-d(z),d(z))$, where
\begin{eqnarray*} 
F_{2}^l(0) & \!\!\!\!\!=  &\!\!\!\!\! 0,\quad \textrm{ for } l=4,5,6,7,8 \\
\partial_\tau {F_{2}^4}(\tau)& \!\!\!\!\!=  &\!\!\!\!\!  \frac{1}{2 \pi} H( \tau^2 -1/4) \tanh(\pi \sqrt{\tau^2 -1/4})(32 |\tau|^3 +23 |\tau|^5+6 |\tau|^7+ |\tau|^9),\\
\partial_\tau {F_{2}^5}(\tau)& \!\!\!\!\!=  &\!\!\!\!\!  \frac{1}{2 \pi} H( \tau^2 -1/4) \tanh(\pi \sqrt{\tau^2 -1/4})(328 |\tau|^3 \! + \! 280 |\tau|^5 \! + \! 75 |\tau|^7 \! + \! 10|\tau|^9 \!+\!  |\tau|^{11}),\\
\partial_\tau {F_{2}^6}(\tau) & \!\!\!\!\!=  &\!\!\!\!\!   \frac{1}{2 \pi} H( \tau^2 -1/4)\tanh(\pi \sqrt{\tau^2 -1/4})\left(5752 |\tau|^3 \! +\! 5040 |\tau|^5 \! +\! 1399 |\tau|^7\! + \!185 |\tau|^9 \! +\! \right. \\
& & \left. 15 |\tau|^{11}+ |\tau|^{13}\right) ,\\
\partial_\tau {F_{2}^7}(\tau) & \!\!\!\!\!=  &\!\!\!\!\!  \frac{1}{2 \pi} H( \tau^2 -1/4)\tanh(\pi \sqrt{\tau^2 -1/4}) \left( 140944 |\tau|^3 +125864 |\tau|^5+36096 |\tau|^7+ \right. \\
& & \left. 4893 |\tau|^9+ 385 |\tau|^{11}+ 21 |\tau|^{13}+  |\tau|^{15}\right) \\
\partial_\tau {F_{2}^8}(\tau) & \!\!\!\!\!=  &\!\!\!\!\!  \frac{1}{2 \pi} H( \tau^2 -1/4)\! \tanh(\pi \sqrt{\tau^2 -1/4}) \! \left(488 3472|\tau|^3\! + \! 4419704 |\tau|^5\! +\! 1299288 |\tau|^7 \! +\right. \\
& & \left. 181275 |\tau|^9+ 7231 |\tau|^{11}+ 189 |\tau|^{13}+  15 |\tau|^{15} + |\tau|^{17}\right).
\end{eqnarray*}
This information might be used to give  bounds in up to the $C^8$ norm for the eigenfunction on a hyperbolic manifold of dimension 2.
\section{Applications}
As an example of an application, we show how the contribution to the length spectrum in Selberg's trace formula can be bounded. For a compact hyperbolic surface $M$ Selberg's trace formula states (see e.g. \cite{Marklof})
\begin{equation} \label{htrace}
\mathrm{tr}(e^{-\Delta t})=\frac{|M| e^{-t/4}}{4 \pi t} \int_0^{\infty} \frac{\pi e^{-r^2t}}{\cosh^2(\pi r)}\id r + \frac{e^{-t/4}}{2\sqrt{ \pi t}} \sum_{n=1}^{\infty} \sum_{\gamma}  \frac{l(\gamma) e^{\frac{-n^2 l(\gamma)^2}{4t}}}{2 \sinh\frac{n l(\gamma)}{2}},
\end{equation}
where the sum is over the set of primitive closed geodesics $\gamma$.
The first term can be computed and does not depend on the geometry of the manifold. Let us denote by $l$ the length of the shortest closed geodesic. Then because each term in (\ref{htrace}) is positive, the second term is bounded for $t<T<\sqrt{l^2+1}-1$ by 
\[
F_T(t)=\sqrt{\frac{T}{t}} \mathrm{tr}(e^{-\Delta T}) e^{\frac{T}{4}+\frac{l^2}{4 T}} e^{\frac{-l^2}{4 t}},
\]
and rapidly decreasing in $t$ as $t \to 0^+$. Our  estimates~(\ref{2dolne}), (\ref{2gorne}) of the local counting function and the equality~(\ref{heattrace1}) imply that we have the following bound on a function $R_t^c$:
\begin{equation} \label{Rct}
R_t^c(x) \leq \frac{1}{4 \pi} \left[ \frac{\Gamma(2,tc)}{t} +\frac{c_1 \Gamma(3/2, tc)}{ t^{1/2}}+ {e^{-ct}}\left( -c + \frac{c^{1/2}4 \nu_2^2 }{\pi d(x)} +\frac{8\nu_2^3+2 \nu_2^2 \pi}{\pi d(x)^2} + \frac{1}{12}\right) \right],
\end{equation}
where $\Gamma$ is the incomplete gamma function, $c_1=\frac{(4 \nu_2^2+2\nu_2 \pi)}{ \pi d(x)}$, $d(x)$ is a twice of the injectivity radius and $\nu_2$ is the first nonzero solution to the equation $\cos(\lambda) \cosh(\lambda)=1$. 
\begin{exm}
The Bolza surface is a compact hyperbolic surface of genus 2 which maximises the order of the symmetry group in this genus. The shortest  simple closed geodesic has length $2 \cosh(1 +\sqrt{2})$, whereas the volume of the manifold is $4 \pi$. On can see that the integral over the manifold of $R_t^c(x)$ gives us the error which we make computing the heat trace when we know only finitely many eigenvalues. The estimate for $R^c_t$ blows up near the origin but it gives good approximation for large $t$, which may been seen on the plot of $R_t^c$ when only 20 first eigenvalues are known. We used the list of eigenvalues obtained by the second author and Uski~\cite{Strohmaier}, which may be found at: \url{http://homepages.lboro.ac.uk/~maas3/publications/eigdata/eig-bolza-refined0-1000.dat}.
\begin{figure}
\centering
\includegraphics[width=7cm]{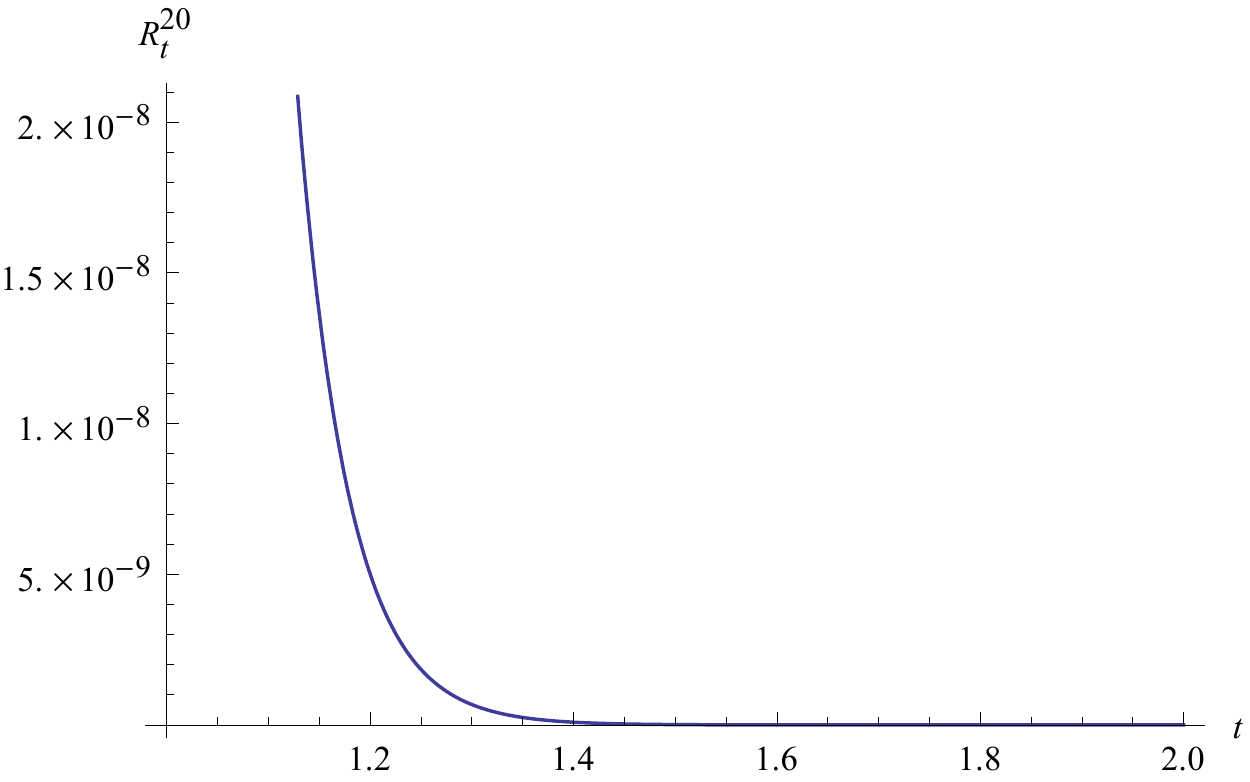}
\quad
\includegraphics[width=7cm]{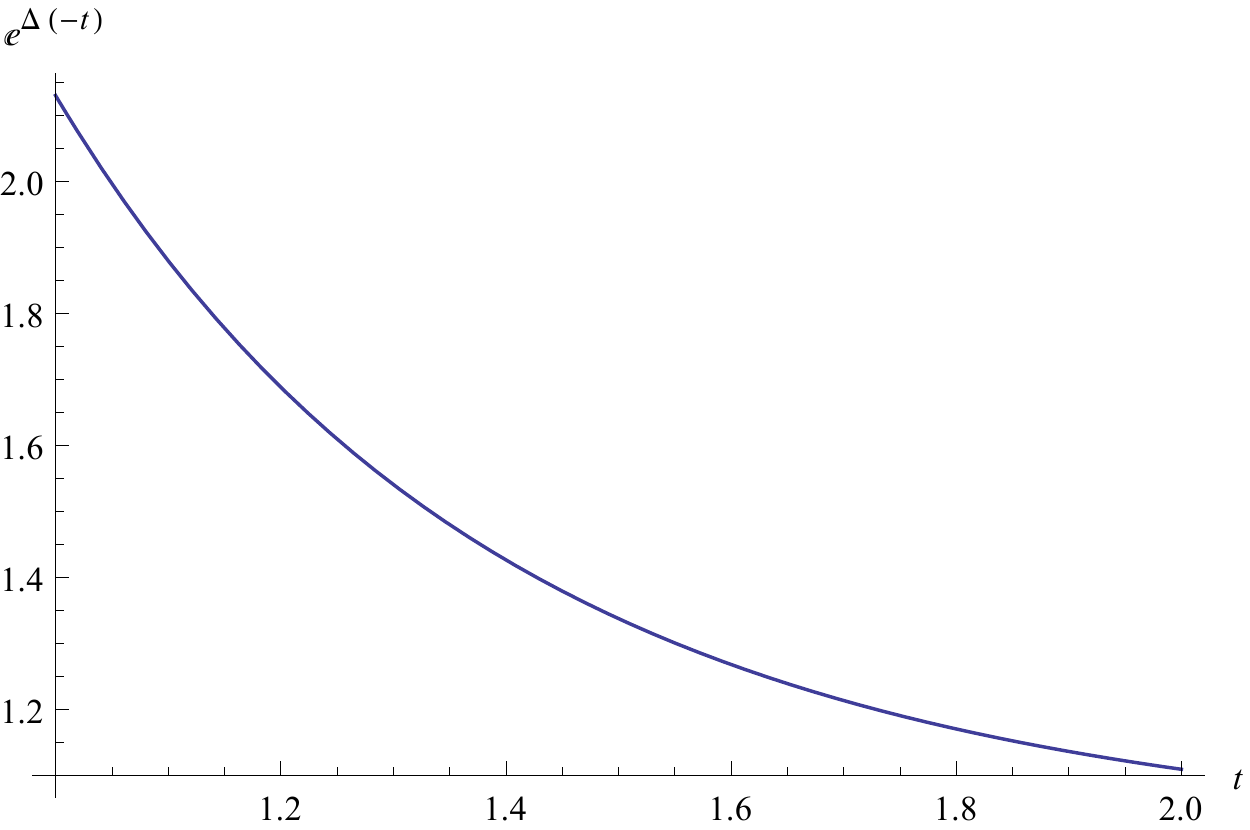}
\caption{The function $R_t^{20}$ on the left and the approximation of the heat trace on the right.}
\label{fig:obrazek k}
\end{figure}
\end{exm} 
Because we obtained the estimate of the local heat trace, we have for a hyperbolic surface of genus $\mathtt{g}$
$$
F_T(t) \! \leq \!\frac{\mathtt{g}-1}{\sqrt{t}} e^{\frac{T}{4}+\frac{l^2}{4 T}-\frac{l^2}{4 t}} \! \left( \frac{1}{ \sqrt{T}} +\frac{2 \nu_2^2+\nu_2 \pi}{ \sqrt{\pi} l } +\sqrt{T}\left(\frac{4\nu_2^3+2 \nu_2^2 \pi}{ \pi l^2} \right) \right).
$$
This shows that for small $t>0$ the main contribution to the heat trace is from the first term of the expansion (\ref{htrace}), which does not depend on the geometry of the manifold. By our heat trace estimates we can actually estimate this quantity.

Estimates may also be obtained for the spectral determinant of $\Delta$. Let us consider the spectral zeta function, $\zeta_{\Delta}(s)$, defined as the meromorphic continuation of the function
$$
\zeta_{\Delta}(s)= \sum_{\lambda_i \neq 0}^{\infty} \lambda_i^{-2s},
$$
where $\lambda_i^2$ are the eigenvalues of the Laplace operator. Then the zeta-regularized determinant $\mathrm{det}_\zeta(\Delta)$ of the Laplacian is defined by
$$
\log (\mathrm{det}_{\zeta} (\Delta))= - \zeta_{\Delta}'(0).
$$
Because 0 is not a pole of $\zeta_\Delta$ the zeta-regularized determinant is well defined. The second author  and Uski~\cite{Strohmaier} showed that
$$
\zeta_{\Delta}'(0)= L_1^\epsilon+L_2^\epsilon+L_3^\epsilon,
$$
where
\begin{eqnarray*}
\!\!\!\!\!\!\!\!\!\!\!\!\!\!\!\!\!\!\!\! L_1^\epsilon &=& \sum_{i=1}^{\infty}\Gamma(0,\epsilon \lambda_i^2),\\
\!\!\!\!\!\!\!\!\!\!\!\!\!\!\!\!\!\!\!\! L_2^{\epsilon}&=&- \frac{|M|}{4 \pi \epsilon} - \left(\frac{|M|}{12 \pi} +1\right)(\gamma + \log(\epsilon)) +  \\
&&  \!\!\!\!\!\!\!\!\!\!\!\!\!\!\!\!\!\!\!\!\!\!\! \frac{|M|}{4}\int_0^{\infty} \mathrm{sech}^2(\pi r) \left( \frac{1-\mathrm{E}_2(\epsilon(r^2+1/4))}{\epsilon}+ \left( r^2 + \frac{1}{4} \right) (\gamma-1+\log(\epsilon(r^2+1/4)))\right) \id r,\\
&& \!\!\!\!\!\!\!\!\!\!\!\!\!\!\!\!\!\!\!\! L_3^\epsilon= \sum_{n=1}^{\infty} \sum_{\gamma} \int_0^\epsilon e^{-\frac{t}{4}} \frac{l(\gamma) e^-\frac{n^2 l(\gamma)^2}{4 t}}{4 \sqrt{\pi} t^{3/2} \sinh(\frac{1}{2}n l(\gamma))} \id t.
\end{eqnarray*}
Here $\mathrm{E}_2(x)$ is the generalised exponential integral which is given by the formula
$$
\mathrm{E}_n(x)= \int_1^{\infty} e^{-xt} t^{-n} \, \id t,
$$
where $\gamma$ is the Euler constant. Let us assume that we have the list of eigenvalues up to $c$, i.e., we know $\{ \lambda_i^2| \lambda_i^2\leq c\}$  and we know the constant $l$. Then we can give a good estimate for $\log \mathrm{det}_\zeta(\Delta)$. That integral $L_2^\epsilon$ can be evaluated with high accuracy with numerical integration. We can split $L_1$ into
$$
L_1^{\epsilon}= \sum_{0<\lambda_i^2 \leq c} \Gamma(0, \epsilon \lambda_i^2) + \int_{\epsilon}^{\infty} t^{-1}R^c_t \, \id t. 
$$
By the (\ref{Rct}) we know that the integral in the formula above is bounded by
\begin{eqnarray*}
(g-1) \left[ \left( -c + \frac{\sqrt{c}4 \nu_2^2 }{\pi l} +\frac{8\nu_2^3+2 \nu_2^2 \pi}{\pi l^2} + \frac{1}{12}\right)\Gamma(0,c \epsilon) + \frac{e^{-c \epsilon}}{\epsilon} + \frac{\Gamma(\frac{1}{2}, c \epsilon)(4 \nu_2^2 +2 \nu_2 \pi)}{\sqrt{\epsilon} \pi l}\right].
\end{eqnarray*}
Note that for $\epsilon\leq T$ we have
$$
|L_3^\epsilon| \leq \int_0^\epsilon \frac{F_T(t)}{2 t} \id t,
$$
therefore $L^\epsilon_3$ is bounded by
$$
\frac{\mathtt{g}-1}{l} e^{\frac{T}{4}+\frac{l^2}{4 T}} \! \left( \frac{1}{ \sqrt{T}} +\frac{2 \nu_2^2+\nu_2 \pi}{ \sqrt{\pi} l } +\sqrt{T}\left(\frac{4\nu_2^3+2 \nu_2^2 \pi}{ \pi l^2} \right) \right) \Gamma \left( \frac{1}{2},\frac{l^2}{4 \epsilon}\right).
$$
Summarizing we have found:
\begin{cor}
The spectral determinant of the Laplace operator on a compact, connected hyperbolic manifold of dimension 2 and genus $\mathtt{g}$ satisfies the lower estimate:
\begin{eqnarray*}
&&\!\!\!\!\! \!\!\!\!\! \!\!\!\!\! - \log(\mathrm{det}_\zeta(\Delta)) \leq \\
&&\!\!\!\!\! \!\!\!\!\! \!\!\!\!\!   \pi(\mathtt{g}-1)  \int_0^{\infty} \mathrm{sech}^2(\pi r) \!\! \left( \! \frac{1 \! - \! \mathrm{E}_2(\epsilon(r^2 \! + \! \frac{1}{4}))}{\epsilon} \! + \! \left( \! r^2 \! + \! \frac{1}{4} \right) \! \left(\! \gamma \! - \! 1 \! + \! \log\!\left( \!\epsilon\!\left( \!r^2 \! + \! \frac{1}{4}\!\right)\!\right)\!\right) \!\! \right)\!\! \id r \!-\\
&&\!\!\!\!\! \!\!\!\!\! \!\!\!\!\!  \frac{(\mathtt{g}-1)}{ \epsilon}  -  \left(\frac{g+2}{3}\right)(\gamma  +  \log(\epsilon))  +  \sum_{0<\lambda_i^2 \leq c} \Gamma(0, \epsilon \lambda_i^2)+ \\
&&\!\!\!\!\! \!\!\!\!\! \!\!\!\!\! (\mathtt{g}-1) \left[ \left( -c + \frac{\sqrt{c}4 \nu_2^2 }{\pi l} +\frac{e^{-c\epsilon}}{ \epsilon} +\frac{8\nu_2^3+2 \nu_2^2 \pi}{\pi l^2} + \frac{1}{12}\right)\Gamma(0,c \epsilon)  + \frac{\Gamma(\frac{1}{2}, c \epsilon)(4 \nu_2^2 +2 \nu_2 \pi)}{\sqrt{\epsilon} \pi l}\right] \! +\\
&&\!\!\!\!\! \!\!\!\!\! \!\!\!\!\! \frac{\mathtt{g}-1}{l} e^{\frac{T}{4}+\frac{l^2}{4 T}} \! \left( \frac{1}{ \sqrt{T}} +\frac{2 \nu_2^2+\nu_2 \pi}{ \sqrt{\pi} l } +\sqrt{T}\left(\frac{4\nu_2^3+2 \nu_2^2 \pi}{ \pi l^2}  \right) \right) \Gamma \left( \frac{1}{2},\frac{l^2}{4 \epsilon}\right).
\end{eqnarray*}
An upper bound is given by:
\begin{eqnarray*}
&&\!\!\!\!\! \!\!\!\!\! \!\!\!\!\! - \log(\mathrm{det}_\zeta(\Delta)) \geq \\
&&\!\!\!\!\! \!\!\!\!\! \!\!\!\!\!   \pi(\mathtt{g}-1)  \int_0^{\infty} \mathrm{sech}^2(\pi r) \!\! \left( \! \frac{1 \! - \! \mathrm{E}_2(\epsilon(r^2 \! + \! \frac{1}{4}))}{\epsilon} \! + \! \left( \! r^2 \! + \! \frac{1}{4} \right) \! \left(\! \gamma \! - \! 1 \! + \! \log\!\left( \!\epsilon\!\left( \!r^2 \! + \! \frac{1}{4}\!\right)\!\right)\!\right) \!\! \right)\!\! \id r \!-\\
&&\!\!\!\!\! \!\!\!\!\! \!\!\!\!\!  \frac{(\mathtt{g}-1)}{ \epsilon}  -  \left(\frac{g+2}{3}\right)(\gamma  +  \log(\epsilon))  +  \sum_{0<\lambda_i^2 \leq c} \Gamma(0, \epsilon \lambda_i^2)- \\
&&\!\!\!\!\! \!\!\!\!\! \!\!\!\!\! \frac{\mathtt{g}-1}{l} e^{\frac{T}{4}+\frac{l^2}{4 T}} \! \left( \frac{1}{ \sqrt{T}} +\frac{2 \nu_2^2+\nu_2 \pi}{ \sqrt{\pi} l } +\sqrt{T}\left(\frac{4\nu_2^3+2 \nu_2^2 \pi}{ \pi l^2}  \right) \right) \Gamma \left( \frac{1}{2},\frac{l^2}{4 \epsilon}\right),
\end{eqnarray*}
for $0<\epsilon \leq T < \sqrt{l^2+1}-1$ and $c>0$.
\end{cor}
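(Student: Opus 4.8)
The plan is to start from the decomposition $-\log(\det_\zeta(\Delta)) = \zeta_\Delta'(0) = L_1^\epsilon + L_2^\epsilon + L_3^\epsilon$ of Strohmaier--Uski recalled above, valid for every $\epsilon>0$ and every cutoff $c>0$, and to estimate the three summands separately. The term $L_2^\epsilon$ is exact: by Gauss--Bonnet $|M| = 4\pi(\mathtt{g}-1)$, whence $-|M|/(4\pi\epsilon) = -(\mathtt{g}-1)/\epsilon$, $|M|/(12\pi)+1 = (\mathtt{g}+2)/3$ and $|M|/4 = \pi(\mathtt{g}-1)$, so $L_2^\epsilon$ equals precisely the $\mathrm{sech}^2$-integral together with $-(\mathtt{g}-1)/\epsilon - \tfrac{\mathtt{g}+2}{3}(\gamma+\log\epsilon)$; this contributes identically to the upper and the lower estimate. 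Since $L_3^\epsilon$ is only controlled for $\epsilon \le T$ and the comparison function $F_T$ is only available for $T < \sqrt{l^2+1}-1$, the final two-sided estimate inherits the constraint $0 < \epsilon \le T < \sqrt{l^2+1}-1$.

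For $L_1^\epsilon$ I would write $L_1^\epsilon = \sum_{0<\lambda_i^2\le c}\Gamma(0,\epsilon\lambda_i^2) + \int_\epsilon^\infty t^{-1}R_t^c\,\id t$ with $R_t^c = \int_M R_t^c(x)\,\id\mathrm{vol}_\metricg(x)$. The finite sum appears verbatim in both bounds. For the lower bound on $\zeta_\Delta'(0)$ I simply drop the remaining integral, which is $\ge 0$ because $R_t^c(x) = \sum_{\lambda_j^2>c}e^{-\lambda_j^2 t}|\varphi_j(x)|^2 \ge 0$. For the upper bound I insert the pointwise estimate (\ref{Rct}), use $d(x) \ge l$ (the least injectivity radius of a closed hyperbolic surface equals $l/2$) together with $|M| = 4\pi(\mathtt{g}-1)$, and integrate the resulting bound against $t^{-1}$ over $[\epsilon,\infty)$ term by term; using $\Gamma(2,x) = (1+x)e^{-x}$, $\Gamma(3/2,x) = \tfrac12\Gamma(1/2,x) + \sqrt{x}\,e^{-x}$ and one integration by parts one finds $\int_\epsilon^\infty \Gamma(2,tc)t^{-2}\,\id t = e^{-c\epsilon}/\epsilon$, $\int_\epsilon^\infty\Gamma(3/2,tc)t^{-3/2}\,\id t = \Gamma(1/2,c\epsilon)/\sqrt{\epsilon}$ and $\int_\epsilon^\infty e^{-ct}t^{-1}\,\id t = \Gamma(0,c\epsilon)$, which reassemble into the bracketed $(\mathtt{g}-1)[\cdots]$ expression.

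Finally $L_3^\epsilon = \sum_{n\ge1}\sum_{\gamma}\int_0^\epsilon(\text{strictly positive})\,\id t$, so $0 < L_3^\epsilon \le \int_0^\epsilon \tfrac{F_T(t)}{2t}\,\id t$ for $\epsilon \le T$; this integral enters with a $+$ sign in the upper bound and, conservatively, with a $-$ sign in the lower bound. Here I would substitute $F_T(t) = \sqrt{T/t}\,\mathrm{tr}(e^{-\Delta T})e^{T/4+l^2/(4T)}e^{-l^2/(4t)}$, bound $\mathrm{tr}(e^{-\Delta T})$ by the integrated local heat-trace estimate of Theorem~\ref{heattrace} (which produces the factor $\mathtt{g}-1$ and the parenthesis $\tfrac{1}{\sqrt{T}} + \tfrac{2\nu_2^2+\nu_2\pi}{\sqrt{\pi}\,l} + \sqrt{T}\,\tfrac{4\nu_2^3+2\nu_2^2\pi}{\pi l^2}$), and evaluate $\int_0^\epsilon t^{-3/2}e^{-l^2/(4t)}\,\id t$: the substitution $s = l^2/(4t)$ turns it into $\tfrac{2}{l}\int_{l^2/(4\epsilon)}^\infty s^{-1/2}e^{-s}\,\id s = \tfrac{2}{l}\Gamma(1/2,\,l^2/(4\epsilon))$, so that after the factor $1/(2t)$ the whole $L_3$-contribution is $\tfrac{\mathtt{g}-1}{l}e^{T/4+l^2/(4T)}(\cdots)\,\Gamma(1/2,\,l^2/(4\epsilon))$. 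Adding the three pieces yields the two displayed inequalities.

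I expect the main obstacle to be the control of $L_3^\epsilon$, the length-spectrum contribution: one has no access to the individual closed geodesics, so one must exploit that every term in Selberg's trace formula is positive and that the range $T < \sqrt{l^2+1}-1$ makes the geometric part of $\mathrm{tr}(e^{-\Delta t})$ at any time $t < T$ pointwise dominated by $F_T(t)$; only then can the infinite sum over $\gamma$ and $n$ be absorbed into a single incomplete Gamma function. The handling of $L_1^\epsilon$ and $L_2^\epsilon$ is, by comparison, bookkeeping: substituting $|M|$, splitting the first sum at the cutoff $c$, and evaluating elementary integrals.
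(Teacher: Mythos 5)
Your proposal follows exactly the route of the paper: the Strohmaier--Uski decomposition $\zeta_\Delta'(0)=L_1^\epsilon+L_2^\epsilon+L_3^\epsilon$, Gauss--Bonnet to make $L_2^\epsilon$ explicit, the split of $L_1^\epsilon$ at the cutoff $c$ with the bound (\ref{Rct}) on $R_t^c$ integrated term by term (your evaluations of $\int_\epsilon^\infty\Gamma(2,tc)t^{-2}\,\id t$ and $\int_\epsilon^\infty\Gamma(3/2,tc)t^{-3/2}\,\id t$ are correct and make explicit what the paper leaves implicit), and the positivity of the geometric terms in Selberg's formula to dominate $|L_3^\epsilon|$ by $\int_0^\epsilon F_T(t)/(2t)\,\id t$ with the heat-trace bound of Theorem~\ref{heattrace}. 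This is correct and essentially identical to the paper's argument, including the sign bookkeeping for the two one-sided estimates.
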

\begin{exm}
In the example of the Bolza surface the  inverse exponent of the sum of the first term in $L^\epsilon_1$ and $L^\epsilon_2$ yields the numerical value
$4.73115$.
With the same $c=20$ as before and $\epsilon =0.3524 $, $T= 2.2165$ one obtains upper and the lower bounds $4.88303$ and $4.51591$ respectively. For $c=50$, $\epsilon=0.22161$, $T= 2.2165$  upper and lower estimates are  $4.71927$ and $4.7253$ respectively. The known value is
$$
\mathrm{det}_\zeta(\Delta) \approx 4.72273280444557.
$$
\end{exm}

\end{document}